\title{Fast algorithmic Nielsen-Thurston classification of four-strand braids}
\theoremstyle{plain}
\newtheorem{definition}{Definition}[section]
\newtheorem{proposition}[definition]{Proposition}
\newtheorem{corollary}[definition]{Corollary}
\newtheorem{lemma}[definition]{Lemma}
\newtheorem{theorem}[definition]{Theorem}
\newtheorem{claim}{Claim}
\newtheorem{example}[definition]{Example}
\newtheorem{remark}[definition]{Remark}
\newtheorem{conjecture}[definition]{Conjecture}
\begin{document}

\author{Matthieu Calvez}
\address{Matthieu Calvez, IRMAR (UMR 6625 du CNRS), Universit\'e de Rennes 1,
Campus de Beaulieu, 35042 Rennes Cedex, France}
\email{matthieu.calvez@univ-rennes1.fr}

\author{Bert Wiest}
\address{Bert Wiest, IRMAR (UMR 6625 du CNRS), Universit\'e de Rennes 1,
Campus de Beaulieu, 35042 Rennes Cedex, France}
\email{bertold.wiest@univ-rennes1.fr}
\subjclass[2010]{20F36, 20F10, 20F65}

\begin{abstract}
We give an algorithm which decides the Nielsen-Thurston type of a given 
four-strand braid. The complexity of our algorithm is quadratic with 
respect to word length. The proof of its validity is based on a result 
which states that for a reducible 4-braid which is as short as possible 
within its conjugacy class (short in the sense of Garside), reducing curves
surrounding three punctures must be round or almost round. 
As an application, we give a polynomial time solution to the conjugacy
problem for non-pseudo-Anosov four-strand braids.
\end{abstract}


\maketitle

\section{Introduction}
\subsection{Statement of the main result}
In the 1980's Thurston gave a complete classification of the elements of the mapping class groups of surfaces into three types: periodic, pseudo-Anosov (pA), or reducible.

During the 1990's, algorithms which decide the type (called the Nielsen-Thurston type) of a given mapping-class were constructed via the theory of train-tracks (\cite{bestvinahandel}, \cite{los}). Unfortunately, the complexity of these algorithms remains unknown. Even in the particular case of the $n$-strand braid group $B_n$ (i.e.\ the mapping class group of an $n$-times punctured disk) the problem of deciding whether a given braid is reducible or not (which we call \emph{reducibility problem}) has currently no known polynomial time solution.

An alternative algorithm in this particular case, using Garside theory, is given in the paper \cite{bertjuan} (which builds on \cite{leelee}). However, the complexity of the algorithm in~\cite{bertjuan}, while conjectured to be polynomial, strongly depends on an open question. 
In the present paper we give a polynomial solution to the reducibility problem in the particular case of 4-braids. More precisely we establish the following result:
\begin{theorem}
\label{main}
There is an algorithm which decides the Nielsen-Thurston type of any given 4-braid $x$, and whose running time is $O(\mathfrak l^2)$, where $\mathfrak l$ denotes the length of $x$ in the classical Artin generators $\sigma_i$.
\end{theorem}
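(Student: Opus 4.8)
The plan is to show that, once $x$ has been replaced by a Garside-short conjugate, its Nielsen--Thurston type can be read off by testing preservation of finitely many explicit simple closed curves, each test being cheap. \emph{Normal form and periodicity.} First I would compute the left normal form of $x$ in the classical Garside structure of $B_4$; as the number of strands is fixed this takes $O(\mathfrak l^2)$ and yields the infimum, supremum and canonical length $\|x\|\le \mathfrak l$. Next I would dispose of the periodic case: a $4$-braid is periodic exactly when some conjugate is a power of $\delta=\sigma_1\sigma_2\sigma_3$ or of an element $\gamma$ with $\gamma^3=\Delta^2$, and since $\delta^4=\Delta^2$ generates the centre, this is equivalent to $x^3$ or $x^4$ lying in $\langle\Delta^2\rangle$, i.e.\ being central. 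One checks this by cubing (resp.\ raising to the fourth power) the normal form --- which multiplies the word length by a constant --- renormalising in $O(\mathfrak l^2)$, and inspecting the canonical factors. If $x$ is periodic we are done; otherwise $x$ is pseudo-Anosov if and only if it preserves no essential simple closed curve of the four-times punctured disc, and the task is to detect such a curve.

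\emph{Passing to a Garside-short conjugate and the candidate list.} I would then conjugate $x$ to an element $y$ of minimal canonical length in its conjugacy class (an element of the super summit set, reached via cyclic sliding), keeping track of a conjugating braid; because there are only four strands each elementary move is cheap, and with suitable bookkeeping $y$ and the conjugating element are produced in $O(\mathfrak l^2)$, with $\|y\|=O(\mathfrak l)$, so that the Artin length $\ell_\sigma(y)$ is $O(\mathfrak l)$ as well. Now the theorem quoted in the introduction applies: if $y$ is reducible and its canonical reduction system contains a curve around three punctures, then $y$ preserves a \emph{round or almost round} such curve, hence a curve from a fixed finite family $\mathcal C_3$ that does not depend on $x$. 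It remains to handle reduction systems built only from curves around two punctures; since the disc has four punctures, such a system is a single curve around two punctures or a pair of disjoint such curves realising a $2+2$ partition, and a companion (more elementary) analysis of the short form $y$ --- alternatively, collapsing an innermost such curve and invoking the easy classification in $B_3$ and $B_2$ --- shows that $y$ then preserves a curve from a further fixed finite family $\mathcal C_2$. Consequently, for non-periodic $x$: $x$ is reducible if and only if $y$ preserves some curve in the finite list $\mathcal C=\mathcal C_2\cup\mathcal C_3$, and pseudo-Anosov otherwise.

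\emph{Testing the candidates.} For each $c\in\mathcal C$ --- a curve of bounded combinatorial complexity --- I would test whether $y(c)$ is isotopic to $c$, for instance by letting the Artin word $y$ act on the Dynnikov coordinates (or on a curve diagram) of $c$ and comparing with those of $c$; since $\ell_\sigma(y)=O(\mathfrak l)$ and $|\mathcal C|=O(1)$, this stays inside the $O(\mathfrak l^2)$ budget. A successful test certifies reducibility; if all of them fail, $x$ is pseudo-Anosov. Summing up, the running time is $O(\mathfrak l^2)$, dominated by the normal-form, periodicity and summit-set computations.

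I expect the real obstacles to be twofold. The first is complexity control: one must bound both the number \emph{and} the cost of the elementary moves used to reach $y$, and also control how the coordinates of the iterated curves grow during the preservation tests, so that everything genuinely fits into the quadratic budget. The second, and conceptually separate from the deep round/almost-round theorem (which we are allowed to assume), is the treatment of reducing curves around two punctures, to which that theorem does not directly apply: this has to be done either by an explicit study of the Garside-short form or by reduction to the braid groups on fewer strands.
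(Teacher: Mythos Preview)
Your outline is sound up through the three-puncture case, and you correctly identify the two-puncture case as the genuine obstacle; unfortunately, the way you propose to handle it does not work. You assert that ``a companion (more elementary) analysis'' would produce a fixed finite family $\mathcal C_2$ of two-puncture curves such that any SSS element with a two-puncture reduction curve preserves one of them. No such analysis is available: the analogue of Proposition~\ref{3punctures} for curves surrounding two punctures is \emph{not} known (it is stated as an open conjecture in the final section), and your alternative --- ``collapsing an innermost such curve'' --- presupposes that you have already located the curve, which is precisely the unknown. This is a real gap, not merely a detail to be filled in.

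The paper's solution to this step is a different idea that you are missing: view the four-punctured disc as a five-punctured sphere by collapsing $\partial D_4$, and then for each $j=1,\ldots,4$ blow up the $j$th puncture to become a new boundary, obtaining new $4$-braids $\tilde x_j$ (after first passing to a pure power). A two-puncture reduction curve of $x$ separates the sphere into a two-punctured and a three-punctured piece, so for some $j$ it becomes a \emph{three}-puncture reduction curve of $\tilde x_j$, to which Proposition~\ref{3punctures} now applies. One checks that $\mathfrak l(\tilde x_j)\le 3\,\mathfrak l(x)$, so the whole detour stays inside $O(\mathfrak l^2)$.

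A secondary point concerns your preservation test. Letting $y$ act on Dynnikov coordinates of a fixed curve and comparing at the end risks coefficient growth that you would have to bound separately. The paper avoids this entirely by using Proposition~\ref{gebhardt}: if $y$ in left normal form preserves a curve of complexity at most one, then after \emph{each} Garside factor the image still has complexity at most one. Since there are only six round-or-almost-round three-puncture curves in $D_4$, one simply propagates through the factors and checks at each step that the image stays in this finite set; this is genuinely $O(\ell(y))$ with bounded constants.
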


\begin{corollary}\label{C:ConjugPbm}
There is a polynomial time solution to the conjugacy search problem for 4-strand braids in the non-pseudo-Anosov case, i.e.~deciding whether two braids, at least one of which is not pA, are conjugate, and if they are, finding a conjugating element.
\end{corollary}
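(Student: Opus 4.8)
The plan is to derive Corollary~\ref{C:ConjugPbm} from Theorem~\ref{main} by combining the fast Nielsen-Thurston classification with the known structure theory of the three types. First I would run the algorithm of Theorem~\ref{main} on both input braids $x$ and $y$ in time $O(\mathfrak l^2)$. If the two braids do not have the same Nielsen-Thurston type they are certainly not conjugate, so we may assume they share a type, and since at least one is non-pseudo-Anosov, both are either periodic or reducible. The strategy then splits into these two cases, which I expect to be of quite different difficulty.

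In the periodic case, I would invoke the classical fact (going back to Eilenberg and de~Kér\'ekj\'art\'o, and in the braid-group formulation to work of González-Meneses) that every periodic $4$-braid is conjugate to a power of one of the two fundamental periodic elements $\delta=\sigma_1\sigma_2\sigma_3$ or $\varepsilon=\sigma_1\sigma_2\sigma_3\sigma_1$ (equivalently $\Delta^2$ is the full twist and the periodic elements are roots of central powers). Two periodic braids are conjugate if and only if they are conjugate to the \emph{same} such power, which can be read off from the exponent sum and the order; producing the conjugating element is then a matter of conjugating each braid to its normal form, which can be done within the same complexity bound using the known polynomial algorithms for computing (a suitable) Garside normal form and tracking the conjugator. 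This case should be essentially bookkeeping.

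In the reducible case, the idea is to reduce the conjugacy problem for $x$ and $y$ to conjugacy problems on the components of the canonical reduction system. Here I would use the main structural input announced in the abstract: for a reducible $4$-braid made as short as possible in its conjugacy class (in the Garside sense), a reducing curve surrounding three punctures is round or almost round. This gives an explicit, polynomially-findable reducing curve, hence an explicit way — after conjugating $x$ and $y$ to short representatives and locating the curves — to cut along it and express each braid, up to conjugacy, in terms of a braid on the ``outer'' strands together with a braid on the ``inner'' punctures (a braid in $B_3$ or $B_2$, or a pure tangle). Conjugacy of the original $4$-braids then reduces to simultaneous conjugacy of these pieces in lower-strand braid groups (and a finite check over the finitely many ways the curve systems can be matched up), each of which is solvable in polynomial time; assembling the partial conjugators yields the conjugating element for $x$ and $y$. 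The main obstacle, and the place where the argument needs the most care, is precisely this reduction step in the reducible case: one must control how the canonical reduction system sits inside a Garside-short representative, ensure that the ``almost round'' curves can be straightened (or otherwise handled) within the stated complexity, and check that the bookkeeping of conjugators through the cutting operation does not blow up the word length beyond $O(\mathfrak l^2)$.
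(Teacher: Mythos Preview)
Your overall strategy---classify via Theorem~\ref{main}, dispatch the periodic case by known structure results, and in the reducible case cut along the canonical reduction system to descend to lower-strand braids---matches the paper's. The periodic case is fine (the paper simply cites~\cite{juan2}, but your approach via roots of $\Delta^2$ is equivalent).

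There is, however, a genuine gap in your reducible case. You invoke the result from the abstract, which is Proposition~\ref{3punctures}: for an SSS element, a reducing curve surrounding \emph{three} punctures is round or almost round. But the paper proves no analogue for curves surrounding \emph{two} punctures, and a reducible $4$-braid may well have its entire canonical reduction system consist of such curves. For those braids your sketch gives no mechanism to locate the curve in polynomial time; ``straightening the almost round curves'' is not the issue---the issue is that the curve need not be almost round at all.

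The paper fills this gap with a device you are missing: collapse $\partial D_4$ to view the braid as acting on a $5$-punctured sphere, then blow up one of the original four punctures to become the new boundary. Under this change of viewpoint, a $2$-puncture reducing curve becomes a $3$-puncture reducing curve for a new $4$-braid $\tilde x_j$ of comparable length (Lemma~\ref{sphere}), to which Proposition~\ref{3punctures} does apply. This is precisely step~6 of the classification algorithm and the most intricate part of the paper's proof of Corollary~\ref{C:ConjugPbm}. Once a conjugate with a \emph{round} $2$-puncture curve is found this way, the paper merges the two inner strands into one, obtaining a $3$-braid $z^{(i)}$, and then branches again on whether $z^{(i)}$ is pseudo-Anosov or reducible (the latter forcing a second round curve to be found). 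Your sketch does not anticipate this second layer of case analysis either.
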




Together with Theorem 2 of \cite{ito} we also obtain the following:
\begin{corollary}\label{ito} 
There is an algorithm with the following properties. It takes as its input
a 4-strand braid~$x$ whose Dehornoy floor is at least 3, and whose closure 
is a knot~$K$; and it outputs, after a calculation 
in time $O(($length$(x))^2)$, whether~$K$ is
\begin{itemize}
\item a torus knot
\item a satellite knot (and in this case it also outputs the reducing
torus)
\item a hyperbolic knot.
\end{itemize}
\end{corollary}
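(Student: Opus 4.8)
The plan is to derive this statement by combining the algorithm of Theorem~\ref{main} with Theorem~2 of~\cite{ito}. Ito's result supplies exactly the geometric dictionary we need: for a braid whose Dehornoy floor is at least~$3$ and whose closure is a knot~$K$, the braid is periodic if and only if $K$ is a torus knot, it is reducible if and only if $K$ is a satellite knot, and it is pseudo-Anosov if and only if $K$ is hyperbolic; and in the reducible case an essential torus in the complement of~$K$ is obtained by sweeping a component of a canonical reducing multicurve of the braid once around the braid axis. Consequently the first step --- and, for deciding between ``torus knot'' and ``hyperbolic'', the only step --- is to run the algorithm of Theorem~\ref{main} on~$x$: in time $O(($length$(x))^2)$ it returns the Nielsen-Thurston type of~$x$, whereupon Ito's theorem names the geometric type of~$K$.

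The only point that needs more than a citation is the production of the reducing torus in the satellite case, and here I would use the proof of Theorem~\ref{main} rather than its statement. In its reducible branch that proof is constructive: it conjugates $x$ to a representative $y=\alpha x\alpha^{-1}$ which is as short as possible within its conjugacy class in the Garside sense, with $\alpha$ explicitly computed, and then --- this is where the structural result announced in the abstract is used --- it exhibits an essential reducing curve for~$y$ among the round and almost-round simple closed curves on the four-times punctured disk, of which there is only a short explicit list, each testable for invariance under the monodromy of~$y$ in time $O($length$(x))$. Transporting such a curve back by~$\alpha^{-1}$ gives an explicit simple closed curve~$C$ invariant under the monodromy of~$x$; since $C$ avoids the four punctures, sweeping it once around the braid axis produces an embedded torus $T\subset S^3$ disjoint from $K$, and by Ito's theorem $T$ is a reducing torus for~$K$. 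The algorithm then outputs~$T$ (encoded, say, by the pair consisting of~$C$ and the braid word~$x$). Running Theorem~\ref{main}, performing the conjugation, scanning the candidate curves and reporting all fit within the $O(($length$(x))^2)$ budget.

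The ingredient I expect to need the most care is the shape of the canonical reducing multicurve. Since $K=\widehat{x}$ is a \emph{knot}, the permutation of the four punctures induced by~$x$ is a $4$-cycle, so no curve enclosing two or three punctures can be individually $x$-invariant; the canonical reducing multicurve must therefore consist of a pair of curves, each enclosing two of the four punctures, which are interchanged by~$x$ (the picture of a satellite as a cable of a $2$-braid). The key lemma advertised in the abstract concerns curves around \emph{three} punctures, which is the delicate case for the general reducibility problem underlying Theorem~\ref{main}; for the present corollary one instead invokes the analogous --- but more elementary, and established by the same Garside-theoretic arguments --- normal form for round and almost-round curves around \emph{two} punctures, so that the witness-extraction above applies verbatim. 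Checking that Ito's hypothesis ``Dehornoy floor at least~$3$'' is precisely what makes the braid axis persist under the JSJ analysis --- so that the braid's canonical reducing system reflects the torus decomposition of the complement of~$K$ itself, and not merely that of~$K$ together with the axis --- is the other thing to verify carefully; granting these two points, the rest is routine bookkeeping built on Theorem~\ref{main}.
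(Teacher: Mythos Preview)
Your overall strategy---run Theorem~\ref{main} to determine the Nielsen--Thurston type, then invoke Theorem~2 of \cite{ito} for the dictionary between Nielsen--Thurston type and geometric type of the closure---is exactly what the paper does; indeed the paper offers no proof beyond the one-line citation preceding the statement.

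Where your proposal goes wrong is in the third paragraph, and implicitly already in the second. You correctly observe that when $\widehat{x}$ is a knot the induced permutation is a $4$-cycle, so the canonical reduction system consists of a pair of curves each surrounding two punctures, interchanged by~$x$. But you then assert that these are handled by an ``analogous---but more elementary, and established by the same Garside-theoretic arguments'' version of Proposition~\ref{3punctures} for two-puncture curves. No such result is proven in this paper: the two-puncture analogue of Proposition~\ref{3punctures} is stated only as an open conjecture in Section~4. The algorithm underlying Theorem~\ref{main} handles two-puncture reducing curves by a different route entirely, the sphere trick of Lemma~\ref{sphere} and the lemma following it: collapse $\partial D_4$ to a fifth puncture on~$S^2$, blow up one of the original four punctures to become the new boundary, obtain the 4-braids $\tilde{x}_j$ in which the two-puncture curve has become a \emph{three}-puncture curve, and only then apply Proposition~\ref{3punctures}. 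The explicit recovery of a round reducing curve for~$x$ itself from this detour is spelled out in the proof of Corollary~\ref{C:ConjugPbm}. So the witness you need for the reducing torus is indeed available within the stated time bound, but not by the mechanism you describe; in particular your claim in the second paragraph that the SSS representative of~$x$ directly exhibits a round or almost-round reducing curve is unjustified precisely in the case relevant here, where that curve surrounds two punctures.
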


The plan of the paper is as follows.
In this first section we recall some facts about reducible braids and Garside theory; the second section is devoted to the proof of Theorem \ref{main}
and Corollary \ref{C:ConjugPbm},
modulo the key technical result (Proposition \ref{3punctures}) whose proof is deferred to the third section. Finally in the fourth section we give some examples and conjectures related to the reducibility problem in braid groups.

\subsection{Reducible braids}
Let $D_n$ be the closed disk in $\mathbb{C}$ with diameter $[0,n+1]$ and with the points $\{1,\cdots,n\}$ removed. It is known \cite{farbmargprimer,juancourse} that the $n$-strand braid group $B_n$ is identified with the mapping class group of $D_n$. 

Hence there is a (right) action of the braid group on the set of 
isotopy classes of simple closed curves in~$D_n$. By abuse of notation we do not distinguish between a simple closed curve and its isotopy class. We denote the curve resulting from the action of the braid~$x$ on the curve~$\mathcal C$ by $\mathcal C*x$.
A simple closed curve is said to be nondegenerate if it surrounds more than one puncture and less than~$n$.

A braid $x$ is said to be \emph{reducible} if it preserves setwise a family of nondegenerate simple closed curves; such a curve is then called a reduction curve for~$x$. A reduction curve of $x$ is said to be \emph{essential} if it does not cross any other reduction curve. 
The set of all essential reduction curves of $x$ is called the \emph{canonical reduction system} of~$x$ and denoted by $CRS(x)$. It is known that the set $CRS(x)$ is non-empty if and only if $x$ is reducible nonperiodic (see \cite{birman}).

A braid $x$ is said to be \emph{periodic} if some power of $x$ is a power of the full twist~$\Delta^2$. Pure periodic braids are known to be powers of $\Delta^2$
(where $\Delta$ is the half-twist of all strands, defined in Artin's generators by the formula $$\Delta= 
(\sigma_1\ldots\sigma_{n-1})(\sigma_1\ldots\sigma_{n-2})\ldots(\sigma_1\sigma_2)\sigma_1.)$$

In what follows we will take ``reducible" to mean ``reducible nonperiodic". Note that a braid $x$ is reducible if and only if every power $x^t$ (with $t\neq0$) of $ x$ is reducible. Note also that reducibility is a property invariant under conjugation.
 
The following definition, which comes from \cite{bertcomplexity}, uses the notion of canonical length of a braid, which will be recalled in the next subsection.
\begin{definition}\label{D:round}
We say that a simple closed curve in $D_n$ is round if it is homotopic to a geometric circle.
The complexity of a simple closed curve $\mathcal C$ in $D_n$ is defined to be the smallest canonical length of a positive braid which sends $\mathcal C$ to a round curve. 
(Note that if some positive braid sends $\mathcal C$ to a round curve, then there is another positive braid of the same canonical length sending the round curve back to $\mathcal C$.)
\end{definition}
Hence the curves of complexity 0 are the round curves; we shall call \emph{almost-round} the curves of complexity 1.  In Figure \ref{toto} are represented three simple closed curves in $D_4$: the first is round, the second is almost-round and is sent to a round curve by the permutation braid $\sigma_1\sigma_3$, the third is of complexity  2 and is obtained from the first by applying $\sigma_2^{-2}$.

\begin{figure}[htb]
\centerline{\input{figurecourbered.pstex_t}}
\caption{}\label{toto}
\end{figure}

Let us introduce a notion which is closely related to Definition \ref{D:round} (see ~\cite{leelee}): the minimal standardizer of a family~$\mathcal C$ of disjoint simple closed curves in $D_n$ is defined to be the smallest positive braid (for the prefix order on $B_n$) which sends~$\mathcal C$ to a family of round curves. Hence the complexity of a curve $\mathcal C$ coincides with the canonical length of its minimal standardizer. In particular, we may consider the minimal standardizer of the canonical reduction system of a reducible braid. 

\subsection{Garside theory}\label{garsidesection}
For an introduction to the classical Garside structure on braid groups, the reader is referred to the papers \cite{garside} and \cite{elrifaimorton} where the notion of left normal form is defined. The second one also introduces the notions of $\inf$, $\sup$ and canonical length. Recall that if a braid $x=\Delta^px_1\cdots x_r$ is in left normal form (where the $x_i$'s are positive permutation braids) then $\sup(x)=p+r$, $\inf(x)=p$ and the canonical length of $x$, denoted by $\ell(x)$, is defined by $\ell(x)=r$.

We also recall that the super summit set $SSS(x)$ of a braid~$x$ is the (finite, nonempty) subset of the conjugacy class of~$x$ containing all those elements which have both minimal $\sup$ and maximal $\inf$ (or equivalently minimal canonical length)
within their conjugacy class \cite{elrifaimorton}. The minimal value of $\ell$ within the conjugacy class
of a braid $x$ is called summit canonical length of $x$ and is denoted
$\ell_s(x)$.

In order to be able to construct at least one element of $SSS(x)$ for any given braid~$x$, let us recall a special kind of conjugation, the so-called \emph{cyclic sliding} \cite{gebhardtjuan}. Let us denote by $\tau$ the conjugation by the half-twist $\Delta$, by $\wedge$ the gcd associated to the prefix order and, for a simple element~$z$ of $B_n$, by $\partial(z)$ the right complement to $\Delta$, that is $\partial(z)=z^{-1}\Delta$.

\begin{definition}[\cite{gebhardtjuan}] Let $x=\Delta^{p}x_1\cdots x_r$ be a braid in left normal form, with $r>0$; one defines the cyclic sliding, denoted $\mathfrak{s}$, by the following conjugation:
$$\mathfrak{s}(x)={\mathfrak{p}(x)}^{-1}x{\mathfrak{p}(x)} \text{ \ where }
\mathfrak p(x)=\tau^{-p}(x_1)\wedge \partial(x_r).$$
The simple braid $\mathfrak{p}(x)$ is called the preferred
prefix of $x$. (If $\ell(x)=0$, one also can define cyclic sliding: in this case $x$ is some power of $\Delta$ and $\mathfrak s$ is the trivial conjugation).
\end{definition}

In \cite{gebhardtjuan} it was shown that, starting from $x$ and iteratively applying the cyclic sliding operation $(\ell(x)-1)\cdot (\frac{n(n-1)}{2}-1)$ times yields an element of $SSS(x)$.
(This is analogue to previous results of \cite{elrifaimorton} and \cite{bkl}
concerning the so-called cycling and decycling operations.)
This leads to the following result:
\begin{theorem}[\cite{gebhardtjuan}]
\label{bkl}
There is an algorithm which takes as its input a braid~$x$ with~$n$ strands, runs for time $O\left(\ell(x)^2 n^2\right)$, and outputs an element~$x'$ of $SSS(x)$ which is of the form $x'=\mathfrak{s}^k(x)$ for some 
$k\in\mathbb{N}$.
\end{theorem}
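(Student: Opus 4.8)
The plan is to describe the algorithm and then check its correctness and its running time separately, using the facts about cyclic sliding recalled above from \cite{gebhardtjuan}. On input~$x$, first compute the left normal form $x=\Delta^{p}x_1\cdots x_r$, so that $\ell(x)=r$ is known. Put $N=(r-1)\bigl(\tfrac{n(n-1)}{2}-1\bigr)$; by the bound recalled above, applying the cyclic sliding operation~$\mathfrak s$ a total of~$N$ times to~$x$ yields an element of $SSS(x)$. So the algorithm applies~$\mathfrak s$ exactly $N$ times, keeping each intermediate braid in left normal form, and outputs $x'=\mathfrak s^{N}(x)$ (if $\ell(x)\le 1$, so that $N\le 0$, one instead performs a bounded number of slidings, negligible in what follows, to reach $SSS(x)$). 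Correctness is then immediate: by construction $x'=\mathfrak s^{k}(x)$ with $k=N\in\mathbb N$, and $x'\in SSS(x)$ by the quoted result of \cite{gebhardtjuan}.

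As for the running time: computing the initial left normal form costs $O(\ell(x)^2)$ elementary operations on simple braids, and $N=O\bigl(\ell(x)\,n^2\bigr)$, so it suffices to show that a single cyclic sliding of a braid presented in left normal form with $r'\le\ell(x)$ factors takes only $O(r')=O(\ell(x))$ such operations; multiplying by~$N$ then gives the announced $O\bigl(\ell(x)^2 n^2\bigr)$. For one step, write $y=\Delta^{q}y_1\cdots y_{r'}$. Computing the preferred prefix $\mathfrak p(y)=\tau^{-q}(y_1)\wedge\partial(y_{r'})$ involves only a bounded number of operations on single simple elements --- the action of $\tau^{\pm1}$ on a permutation braid, the right complement~$\partial$, one meet~$\wedge$ --- hence is $O(1)$. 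It remains to obtain the left normal form of the conjugate $\mathfrak p(y)^{-1}\,y\,\mathfrak p(y)$. Moving $\mathfrak p(y)^{-1}$ past $\Delta^{q}$ rewrites this conjugate as $\Delta^{q-1}$ times a product of $r'+2$ simple elements, of which the middle $r'$ (namely $y_1,\dots,y_{r'}$) are already left-weighted; the renormalization thus amounts to inserting one simple element at each end of a left normal form and left-weighting. The key point is that left-weighting after multiplication by a simple element at one end requires only $O(r')$ local normalizations of consecutive pairs of factors, because the defect it creates travels monotonically towards the other end and never comes back; the same holds at the other end. Hence one cyclic sliding costs $O(\ell(x))$ operations, and the total running time is $O\bigl(\ell(x)^2 n^2\bigr)$.

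The one point that genuinely requires care is this last estimate --- that a single cyclic sliding costs $O(\ell(x))$ rather than the naive $O(\ell(x)^2)$. It rests on a one-directional propagation lemma for the renormalization of a left normal form after multiplication by a simple element, which in turn exploits that $\mathfrak p(y)$ is assembled from divisors of the extreme factors $\tau^{-q}(y_1)$ and $\partial(y_{r'})$ of the normal form of~$y$, so that conjugating by it disturbs the normal form only in a controlled, non-recursive way. Everything else is routine bookkeeping with the Garside normal form.
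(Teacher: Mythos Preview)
The paper does not actually prove this theorem: it is quoted from \cite{gebhardtjuan}, and the only justification given in the paper is the sentence preceding the statement, namely that iterating cyclic sliding $(\ell(x)-1)\bigl(\tfrac{n(n-1)}{2}-1\bigr)$ times lands in $SSS(x)$. So there is no ``paper's own proof'' to compare against beyond that one-line reduction.

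Your sketch is a correct and reasonable expansion of that reduction. A few remarks. First, the unit of ``time'' here is implicitly the number of elementary operations on simple elements (meets, complements, local left-weightings); each such operation on a simple in~$B_n$ costs an extra factor polynomial in~$n$ if measured in bit operations, but the paper's statement is using the same convention, so your count matches. Second, your handling of the case $\ell(x)\le 1$ can be sharpened: if $\ell(x)=0$ then $x$ is a power of~$\Delta$ and already lies in $SSS(x)$; if $\ell(x)=1$ then an exponent-sum argument shows $x$ is not conjugate to any power of~$\Delta$, so $\ell_s(x)=1=\ell(x)$ and again $x\in SSS(x)$ with no sliding needed. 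Third, the rewriting of $\mathfrak p(y)^{-1}y\,\mathfrak p(y)$ can be done more directly than via $\Delta^{q-1}$ and $r'+2$ factors: since $\tau^{q}(\mathfrak p(y))$ is a prefix of~$y_1$ and $\mathfrak p(y)$ is a prefix of~$\partial(y_{r'})$, the conjugate is $\Delta^{q}\,y_1'\,y_2\cdots y_{r'-1}\,(y_{r'}\mathfrak p(y))$ with $y_1'=(\tau^{q}(\mathfrak p(y)))^{-1}y_1$ and $y_{r'}\mathfrak p(y)$ both simple; only the two extreme pairs may fail to be left-weighted, and the standard one-directional propagation lemma you invoke then gives the $O(r')$ bound per sliding. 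With these cosmetic adjustments your argument is complete.
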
 


The cyclic sliding operation behaves well with respect to the reducibility problem:
\begin{proposition}[\cite{juan2010}]\label{bernardete}
Let $x=\Delta^{p}x_1\cdots x_r$ be a braid in left normal form and let~$\mathcal C$ be a round simple closed curve such that $\mathcal C*x=\mathcal C$. Then the curve $\mathcal C*\mathfrak p(x)$ is also round. In particular, the braid $\mathfrak{s}(x)$ also preserves a round curve.
\end{proposition}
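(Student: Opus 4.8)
The plan is to reduce the statement to two ingredients available in the literature: a result on the interaction of round curves with left normal forms (see \cite{leelee}, going back to Bernardete, Guti\'errez and Nitecki), and the fact — also in \cite{leelee}, and built into the very definition of the minimal standardizer — that for a fixed curve $\mathcal D$ the set of positive braids sending $\mathcal D$ to a round curve has a $\preccurlyeq$-minimum, hence is closed under $\wedge$. I would first dispose of the ``in particular'' clause, which is formal: if $\mathcal C*\mathfrak p(x)$ is round, then from $\mathfrak s(x)=\mathfrak p(x)^{-1}x\,\mathfrak p(x)$ we get $(\mathcal C*\mathfrak p(x))*\mathfrak s(x)=(\mathcal C*x)*\mathfrak p(x)=\mathcal C*\mathfrak p(x)$, so $\mathfrak s(x)$ preserves the round curve $\mathcal C*\mathfrak p(x)$. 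Thus the whole task is to show that $\mathcal C*\mathfrak p(x)$ is round; one may assume $r=\ell(x)\ge 1$, since for $r=0$ one has $\mathfrak p(x)=1$ and there is nothing to prove.

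The first step is to show that the two simple braids $\tau^{-p}(x_1)$ and $\partial(x_r)$ each carry $\mathcal C$ to a round curve. Here I would invoke the result that if $\mathcal D$ is round and $y=\Delta^q y_1\cdots y_s$ is in left normal form with $\mathcal D*y$ round, then every partial product $\mathcal D*\Delta^q y_1\cdots y_i$ (for $0\le i\le s$) is round. Applying this to $\mathcal D=\mathcal C$ and $y=x$ — legitimate since $\mathcal C*x=\mathcal C$ is round — gives in particular that $\mathcal C*\Delta^p x_1$ (case $i=1$) and $\mathcal C*\Delta^p x_1\cdots x_{r-1}$ (case $i=r-1$) are round. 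Since powers of $\Delta$ take round curves to round curves, it follows on the one hand that $\mathcal C*\tau^{-p}(x_1)=(\mathcal C*\Delta^p x_1)*\Delta^{-p}$ is round, and on the other hand that $\mathcal C*x_r^{-1}=(\mathcal C*x)*x_r^{-1}=\mathcal C*\Delta^p x_1\cdots x_{r-1}$ is round, whence $\mathcal C*\partial(x_r)=(\mathcal C*x_r^{-1})*\Delta$ is round as well.

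The second step handles the gcd. By definition $\mathfrak p(x)=\tau^{-p}(x_1)\wedge\partial(x_r)$, so I must show that the $\wedge$ of two positive braids sending $\mathcal C$ to round curves again sends $\mathcal C$ to a round curve. Write $a=\tau^{-p}(x_1)$, $b=\partial(x_r)$, $c=a\wedge b$ and $a=ca'$, $b=cb'$ with $a',b'\in B_n^+$; left-cancellativity in $B_n^+$ together with the identity $c(a'\wedge b')=(ca')\wedge(cb')$ forces $a'\wedge b'=1$. Set $\mathcal D:=\mathcal C*c$. Then $\mathcal D*a'=\mathcal C*a$ and $\mathcal D*b'=\mathcal C*b$ are both round, so $a'$ and $b'$ are positive standardizers of $\mathcal D$; since the minimal standardizer of $\mathcal D$ is a common prefix of all of them, it divides $a'\wedge b'=1$ and is therefore trivial, which means $\mathcal D$ is already round. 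Hence $\mathcal C*\mathfrak p(x)=\mathcal D$ is round, as desired.

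I expect the real difficulty to lie entirely in the two imported facts rather than in assembling the argument. The delicate input is the left-normal-form result of the first step, whose proof (essentially that of Bernardete, Guti\'errez and Nitecki) is technical but off the shelf; and in the second step one must genuinely use that the minimal standardizer of $\mathcal D$ is the \emph{minimum} — not merely a minimal element — of the set of positive standardizers of $\mathcal D$, for it is precisely this $\wedge$-closedness that makes the gcd appearing in the definition of $\mathfrak p(x)$ compatible with roundness. The remaining manipulations ($\tau$ being a monoid automorphism carrying simple elements to simple elements, the lattice identities in $B_n^+$, and the identification of partial products) are routine bookkeeping.
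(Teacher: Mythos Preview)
The paper does not supply its own proof of this proposition; it is quoted from~\cite{juan2010} without argument. Your proof is correct and is essentially the expected one: you combine the Bernardete--Guti\'errez--Nitecki result (the $s=0$ case of Proposition~\ref{gebhardt} in the paper, i.e.\ Theorem~5.7 of~\cite{bernardete}) with the $\wedge$-closedness of standardizers implicit in the existence of the minimal standardizer from~\cite{leelee}. Both ingredients are already invoked in the paper, and your bookkeeping with $\tau^{-p}(x_1)=\Delta^p x_1\Delta^{-p}$, $\partial(x_r)=x_r^{-1}\Delta$, and the cancellation $a'\wedge b'=1$ is accurate; the ``in particular'' clause is indeed immediate from $(\mathcal C*\mathfrak p(x))*\mathfrak s(x)=\mathcal C*\mathfrak p(x)$.
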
  

Now every reducible braid has some conjugate with
round essential reduction curves. Thus Proposition~\ref{bernardete}  
together with Theorem~\ref{bkl} yields:
\begin{corollary}[\cite{bernardete}]\label{C:RoundInSomeSSS} 
For any reducible braid $x$ there exists an element of $SSS(x)$ with a round essential reduction curve.
\end{corollary}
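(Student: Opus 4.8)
The plan is to combine the three facts assembled just above: that every reducible braid has a conjugate whose essential reduction curves are round, Proposition~\ref{bernardete}, and Theorem~\ref{bkl}. Let $x$ be reducible. Since a reducible (non-periodic) braid has non-empty canonical reduction system, the first fact provides a conjugate $y=\alpha^{-1}x\alpha$ of $x$ all of whose essential reduction curves---that is, all the curves of $CRS(y)$---are round. As $y$ is conjugate to $x$, we have $SSS(y)=SSS(x)$.

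Next I would run $y$ through the algorithm of Theorem~\ref{bkl}: iterated cyclic sliding produces, after some number $k$ of steps, an element $z=\mathfrak{s}^{k}(y)\in SSS(y)=SSS(x)$, and it remains to see that $z$ still admits a round essential reduction curve. Write $y_0=y$ and $y_{i+1}=\mathfrak{s}(y_i)=\mathfrak{p}(y_i)^{-1}\,y_i\,\mathfrak{p}(y_i)$ for $0\le i<k$. Each $y_{i+1}$ is a conjugate of $y_i$ by the positive simple braid $\mathfrak{p}(y_i)$, so the canonical reduction system is transported along this conjugation, $CRS(y_{i+1})=CRS(y_i)*\mathfrak{p}(y_i)$, and essential reduction curves are carried to essential reduction curves. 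Proposition~\ref{bernardete} says precisely that this passage to the preferred prefix keeps a round reduction curve round; applying it at each of the $k$ steps and inducting on $i$, one gets that $CRS(z)$ again consists entirely of round curves. In particular $z\in SSS(x)$ has a round essential reduction curve, as required.

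The step calling for a little care---and which I would regard as the (mild) crux of the argument---is the interplay of Proposition~\ref{bernardete} with permutations of the reduction curves. The proposition is stated for a single curve $\mathcal C$ that is \emph{fixed} by the braid, $\mathcal C*x=\mathcal C$, whereas a reducible braid in general preserves its canonical reduction system only \emph{setwise}, the individual curves being genuinely permuted (a braid may, for instance, preserve a pair of disjoint round reduction curves only by interchanging them). One must therefore check that the argument of \cite{juan2010, bernardete} still goes through when applied to each curve of the finite $y_i$-invariant family $CRS(y_i)$ at once---equivalently, that the preferred prefix $\mathfrak{p}(y_i)$ sends \emph{every} round curve of that invariant family again to a round curve, and not merely a globally fixed one. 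Granting this (and in the special case where some round reduction curve is actually fixed, Proposition~\ref{bernardete} applies verbatim), the corollary follows immediately.
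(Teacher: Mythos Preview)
Your argument is exactly the one the paper sketches in the sentence preceding the corollary: conjugate to make the essential reduction curves round, then iterate cyclic sliding into the $SSS$ while invoking Proposition~\ref{bernardete} at each step. The paper gives no further detail.

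The caveat you flag---that Proposition~\ref{bernardete} as stated here requires $\mathcal C*x=\mathcal C$, whereas a reducible braid may genuinely permute the curves of its $CRS$---is well spotted and is a real gap relative to the proposition \emph{as formulated in this paper}. It is, however, resolved by the original sources: Theorem~5.7 of \cite{bernardete} (and likewise \cite{juan2010}) proves the result for a \emph{family} of disjoint round curves preserved setwise, showing that every initial segment of the left normal form sends the whole family to a family of round curves. The paper itself invokes precisely this family version as the ``case $s=0$'' in the proof of Proposition~\ref{gebhardt}. So your inductive step goes through once you cite the family version rather than the single-curve Proposition~\ref{bernardete}.
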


We shall also need the following result which essentially comes from~\cite{gebhardt}.

\begin{proposition}\label{gebhardt}
Let $x=\Delta^px_1\cdots x_r$ be the left normal form of a reducible braid which preserves a simple closed curve $\mathcal C$ of complexity $s$. Then the complexity of the curve $\mathcal C*\Delta^px_1\cdots x_i$ is bounded above by $s$ for every $i$ with $1\leqslant i \leqslant k$.
\end{proposition}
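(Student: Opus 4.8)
The plan is to bootstrap from the case $s=0$. When $\mathcal C$ is round, the assertion — that every partial product of the left normal form of a braid stabilising a round curve again sends that curve to a round curve — is the theorem of Benardete, Guti\'errez and Nitecki \cite{bernardete}; Proposition~\ref{bernardete}, quoted above from \cite{juan2010}, is the form of it adapted to cyclic sliding, and Proposition~\ref{gebhardt} is the general-complexity extension carried out in \cite{gebhardt}. The first step I would take is to dispose of the half-twist: since the full twist $\Delta^{2}$ acts trivially on isotopy classes of simple closed curves and conjugation by $\Delta$ (the map $\tau$) carries round curves to round curves, the complexity of a curve is invariant under the action of $\Delta$; writing $\mathcal C*\Delta^{p}x_{1}\cdots x_{i}=\bigl(\mathcal C*\tau^{-p}(x_{1}\cdots x_{i})\bigr)*\Delta^{p}$, it is enough to bound the complexities of the curves $\mathcal C*\tau^{-p}(x_{1}\cdots x_{i})$, which is exactly the data that cycling keeps track of. It is worth noting what makes the statement non-trivial: the curve $\mathcal C*\Delta^{p}x_{1}\cdots x_{i}$ is sent to a round curve by $x_{i+1}\cdots x_{r}$ followed by a standardiser of $\mathcal C$, so its complexity is \emph{a priori} bounded only by $(r-i)+s$, and the whole point is to improve this bound to $s$.

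To pass to the round case, I would fix a positive braid $\alpha$ of canonical length $s$ with $\mathcal R:=\mathcal C*\alpha$ round (such $\alpha$ exists by Definition~\ref{D:round}); then $\alpha^{-1}x\alpha$ stabilises the round curve $\mathcal R$, so the Benardete--Guti\'errez--Nitecki theorem applies to it. The task is to transport this conclusion back to $x$, and the natural way to do it is an induction on $i$ in which one peels off one left-normal-form factor of $x$ at a time: assuming $\mathcal C*\Delta^{p}x_{1}\cdots x_{i-1}$ is standardised by a positive braid of canonical length $\le s$, one gets a conjugate of $x$ — still reducible — stabilising a round curve, and then, feeding this into the Benardete--Guti\'errez--Nitecki theorem and translating back through the standardiser, one should obtain that $\mathcal C*\Delta^{p}x_{1}\cdots x_{i}$ is again standardised by a positive braid of canonical length $\le s$. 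The delicate point is that the standardiser can be kept of canonical length $\le s$ for \emph{every} $i$, that is, that it never grows; this is exactly the step where one needs the machinery of \cite{gebhardt}, since it amounts to controlling how the standardising (equivalently, the conjugating) element behaves as one runs through the normal form — which is precisely what the transport map and the length estimates attached to it are designed to do.

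The hard part, then, is this uniform control of the standardiser: showing that applying a single left-normal-form factor moves a curve of complexity $\le s$ to another curve of complexity $\le s$, \emph{provided} the ambient braid stabilises the original curve. This is the general-complexity analogue of the phenomenon, central to this paper, that round and almost-round curves stay controlled under the braids fixing them; the difficulty lies in reconciling the combinatorics of left-weighting of left normal forms with the geometric notion of complexity, so that the ``budget'' $s$ is never exceeded along the chain $\mathcal C\to\mathcal C*\Delta^{p}x_{1}\to\cdots\to\mathcal C*\Delta^{p}x_{1}\cdots x_{r}=\mathcal C$.
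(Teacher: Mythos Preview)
Your overall strategy---reduce to the round case via a standardiser, invoke the Benardete--Guti\'errez--Nitecki result there, and then control the standardiser factor by factor using the transport machinery of \cite{gebhardt}---is exactly the paper's approach. What you are missing is the precise hypothesis under which Gebhardt's length estimates (his Proposition~2.1 and Corollary~2.2) actually apply: one needs to know that conjugating $x$ by the standardiser does not enlarge the canonical length, i.e.\ that $\inf(\alpha^{-1}x\alpha)\geqslant\inf(x)$ and $\sup(\alpha^{-1}x\alpha)\leqslant\sup(x)$. For an arbitrary $\alpha$ of canonical length $s$ this need not hold, and without it Gebhardt's decomposition gives no bound on $\ell(P_i)$.

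The paper closes this gap by choosing not just any standardiser but the \emph{minimal} standardiser $P$ of $CRS(x)$ (in the prefix order), and then invoking Theorem~4.9 of Lee--Lee \cite{leelee}, which guarantees precisely the $\inf$/$\sup$ inequalities above for this particular $P$. With that in hand, Gebhardt's result yields positive braids $P_0=\tau^p(P),P_1,\ldots,P_r$ with $\ell(P_i)\leqslant\ell(P)$ such that $\Delta^p(P_0^{-1}x_1P_1)\cdots(P_{r-1}^{-1}x_rP_r)$ is the left normal form of $P^{-1}xP$; applying the $s=0$ case to this normal form gives roundness of $(\mathcal C*\Delta^p x_1\cdots x_i)*P_i$, whence the complexity bound. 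So there is no induction on $i$---all the $P_i$'s come out at once from a single application of Gebhardt's decomposition. Your inductive reformulation is not wrong in spirit, but the actual work is concentrated in the Lee--Lee input, which you did not identify.
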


\begin{proof}
Notice that the case $s=0$ in the proposition is proved in 
Theorem 5.7 in \cite{bernardete}.
Let $P$ be the minimal standardizer of $CRS(x)$. By Theorem 4.9 in \cite{leelee}, we have:
$\inf(P^{-1}xP)\geqslant\inf(x)$ and $\sup(P^{-1}xP)\leqslant\sup(x)$. 
Under some minor changes, Proposition 2.1 and Corollary 2.2 in \cite{gebhardt} assert the existence of positive braids $P_0=\tau^p(P),P_1,\ldots,P_r$ such that $$P^{-1}xP\ \text{has left normal} \ \Delta^p(P_0^{-1}x_1P_1)\ldots(P_{r-1}^{-1}x_rP_r)$$ (with possibly some indices $0\leqslant i'<i''\leqslant r-1$ such that 
$u_i^{-1}x_{i+1}u_{i+1}=\Delta$ for $i\leqslant i'$ and $u_i^{-1}x_{i+1}u_{i+1}=1$ for $i\geqslant i''$) and $$\inf(P_i)\geqslant \inf(P),\sup(P_i)\leqslant \sup(P)\ \text{for}\ i=1\ldots r.$$
The case $s=0$ of the current Proposition yields the roundness of the curves 
$$(\mathcal C*P)*(\Delta^p(P_0^{-1}x_1P_1)\ldots(P_{i-1}^{-1}x_{i}P_{i}))$$
for all $i=1,\ldots,r$.
The latter can now be rewritten as $(\mathcal C*(\Delta^px_1\ldots x_i))*P_i$. The complexity of the curve 
$\mathcal C*\Delta^px_1\ldots x_i$ is thus at most $\ell(P_i)\leqslant \ell(P)=s$. This shows Proposition~\ref{gebhardt}.
\end{proof}

The special case $s=0$ of Proposition~\ref{gebhardt} together with 
Corollary~\ref{C:RoundInSomeSSS}, means that for \emph{some} (but not necessarily every) element of the $SSS$ of a reducible braid, reducibility is easy to detect.

However this is not sufficient in order to obtain a polynomial-time algorithm for detecting reducibility since the size of the $SSS$ may grow exponentially with both word length and braid index (see \cite{juan2}). This difficulty does not yet appear in the case $n=3$, as will be shown in Proposition \ref{P:reducibilityb3}.
Our strategy for proving Theorem~\ref{main} is to show that also in the case of four strand braids, the reducibility is easy to see for \emph{every} element in the SSS. The following is our main technical result:

\begin{proposition}
\label{3punctures}
Let $x$ be a reducible 4-braid such that $x\in SSS(x)$. Suppose that $x$ has an essential reduction curve surrounding 3 punctures. Then this curve is round or almost round.
\end{proposition}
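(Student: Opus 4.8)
The plan is to study a reducible 4-braid $x\in SSS(x)$ together with an essential reduction curve $\mathcal C$ enclosing three of the four punctures, and to bound its complexity $s$. Suppose for contradiction that $s\geqslant 2$. By Corollary~\ref{C:RoundInSomeSSS} there is a conjugate $y$ of $x$, hence $y\in SSS(x)$ as well, in which the corresponding reduction curve $\mathcal C'$ enclosing three punctures is round; and since $x$ and $y$ have the same canonical length, one can pass from $x$ to $y$ by a sequence of cyclic slidings (Theorem~\ref{bkl}), or more crudely by conjugating by a single element realising the $SSS$ as a connected graph. The key point I would exploit is that a round curve around three of the four punctures in $D_4$ is extremely rigid: up to the obvious symmetry it is essentially unique, and the subgroup of $B_4$ preserving such a round curve is a well-understood ``mixed'' subgroup (a copy of $B_3\times B_1$ amalgamated appropriately, i.e.\ braids that permute the three inner strands among themselves and leave the fourth strand in a fixed outer region). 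So the real content is to control how the curve $\mathcal C$ of complexity $s$ sits relative to the combinatorics of the left normal form $x=\Delta^p x_1\cdots x_r$.

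The main tool will be Proposition~\ref{gebhardt}: as we read the normal form letter by letter, the complexity of the partial images $\mathcal C*\Delta^p x_1\cdots x_i$ never exceeds $s$. First I would set up a ``standardised picture'': let $P$ be the minimal standardizer of $\mathcal C$, so $\ell(P)=s$, and pull the whole discussion back by $P$ so that we are looking at the conjugate $P^{-1}xP$, whose normal form interleaves with a decreasing-then-increasing family of positive prefixes $P_0,\dots,P_r$ as in the proof of Proposition~\ref{gebhardt}, each of canonical length at most $s$. Because the curve is round in this conjugated picture, at each stage the braid that carries the round three-puncture curve forward is, up to the standardizer $P_i$, a braid preserving a round curve — and here one can invoke Proposition~\ref{bernardete} repeatedly. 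The aim is to show that if $s\geqslant 2$ then $\ell(P^{-1}xP)<\ell(x)$, contradicting $x\in SSS(x)$; equivalently, that a genuinely complexity-$\geqslant 2$ reduction curve around three punctures forces the braid to be ``inflated'' and hence not summit-minimal.

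Concretely I would argue as follows. A three-puncture curve of complexity $s$ is obtained from a round one by a positive braid of canonical length $s$, and in $B_4$ such positive braids that genuinely increase complexity must repeatedly push the ``outer'' strand across the boundary of the disk region bounded by $\mathcal C$ — each such crossing costs (at least) one factor like $\sigma_3$ or $\sigma_2$ on each side. Tracking the bookkeeping of Proposition~\ref{gebhardt}, the conjugation by the standardizer replaces each simple factor $x_{i}$ by $P_{i-1}^{-1}x_i P_i$ and the key inequalities $\inf(P_i)\geqslant \inf(P)$, $\sup(P_i)\leqslant\sup(P)$ together with the roundness at every stage should force that, whenever $s\geqslant 2$, at least one of the factors $P_{i-1}^{-1}x_iP_i$ is trivial or equals $\Delta$ in a way that was not already absorbed, i.e.\ $\ell$ strictly drops. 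The heart of the matter is thus a finite, essentially combinatorial analysis of how complexity-$2$ three-puncture curves in $D_4$ behave under a single permutation braid — showing that you can never ``hold'' complexity $2$ through a whole summit-minimal normal form without being able to shorten it.

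The step I expect to be the main obstacle is precisely this last one: ruling out complexity exactly $2$. Complexity $0$ (round) and, with more work, complexity $1$ (almost-round) are comparatively tame, and the proposition explicitly allows them; the danger is a configuration where the three-puncture curve wraps the fourth strand twice in opposite-feeling ways so that no single cyclic sliding, and no single step of the standardizer bookkeeping, visibly shortens the braid, yet the curve is not almost-round. Handling this will require either a careful case analysis of the (finitely many) complexity-$2$ three-puncture curves up to the $B_4$-symmetry and the action of the stabiliser of a round curve, or an auxiliary inequality showing that complexity $\geqslant 2$ of $CRS(x)$ directly contradicts minimality of $\ell$ via the properties of the minimal standardizer in Theorem~4.9 of \cite{leelee}. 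I would try the latter, more conceptual route first, and fall back on the explicit case analysis in $D_4$ — which is small enough to be feasible — if the clean argument does not close.
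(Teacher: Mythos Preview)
Your primary strategy --- showing that conjugation by the minimal standardizer $P$ forces $\ell(P^{-1}xP)<\ell(x)$ when $s\geqslant 2$ --- does not work, and in fact the paper establishes the opposite. Theorem~4.9 of \cite{leelee} only gives $\ell(P^{-1}xP)\leqslant \ell(x)$, and this inequality is \emph{never} strict in the situation at hand: the paper proves (see Proposition~\ref{infsup} and its proof) that a suitable round-curve conjugate $X$ of $x$ satisfies $\inf(X)=\inf(x)$ and $\sup(X)=\sup(x)$, hence $X\in SSS(x)$. This is already implicit in Corollary~\ref{C:RoundInSomeSSS}: there always \emph{is} an element of $SSS(x)$ with a round reduction curve, so standardizing cannot shorten below summit length. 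Your hoped-for ``absorption'' of a factor into $\Delta$ or into $1$ simply does not occur.

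Your fallback --- an explicit analysis of the finitely many complexity-$2$ three-puncture curves in $D_4$ --- is exactly what the paper does, but you are missing the structural ingredient that makes that analysis close. The paper first shows that if $\hat x$ denotes the 3-braid obtained by deleting the outer strand, then $\inf(\hat x)=\inf(x)$ and $\sup(\hat x)=\sup(x)$ (Proposition~\ref{infsup}, proved via a winding-number estimate, Lemma~\ref{lemma1}). This puts $\hat x$ in its own $SSS$, and since it is a 3-braid of summit length $>1$, Proposition~\ref{rigid} makes $\hat x$ \emph{rigid}. Rigidity links the last letter of $\hat x$ to the first letter of $\Delta_3^{-\inf(x)}\hat x$, and it is precisely this constraint, combined with Proposition~\ref{gebhardt}, that produces the contradictions in the curve-by-curve analysis (Lemmas~\ref{t}--\ref{r}). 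Without the rigidity of $\hat x$ you have no handle on how the normal form of $x$ begins and ends simultaneously, and the case analysis of complexity-$2$ curves does not terminate.
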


Notice that an essential reduction curve of a 4-braid~$x$ surrounding three punctures is fixed by $x$ (not sent to another reduction curve).
Now, let us recall from~\cite{gebhardtjuan} a last Garside-theoretical notion:

\begin{definition}[\cite{gebhardtjuan}]
Let $x=\Delta^px_1\cdots x_r$ be a braid in left normal form. We say that~$x$ is rigid if its preferred prefix is trivial, i.e.\ if the equality $\mathfrak p(x)=1$ holds. 
That means that the pair $x_r\tau^{-p}(x_1)$ is left-weighted.  
\end{definition}

We end this section with some results specifically concerning 3-braids.
We start by recalling that in this case there are only four simple braids other than $1$ and~$\Delta$ (i.e.\ nontrivial strict prefixes of~$\Delta$), namely $\sigma_1$, $\sigma_2$, $\sigma_1\sigma_2$ and $\sigma_2\sigma_1$.

\begin{proposition}
\label{formenormale3}
Let $x$ be a 3-braid with $\inf(x)=p$ and $\ell(x)=r$. Let ${x_1,\ldots, x_r \in B_3^{+}}$ be non-trivial simple elements different from $\Delta$. 
If $x=\Delta^px_1\cdots x_r$, then this is the left normal form of $x$.
\end{proposition}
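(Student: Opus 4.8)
The plan is to argue by contradiction, exploiting the fact recalled above that, apart from $\Delta$, the only non-trivial simple elements of $B_3$ are $\sigma_1$, $\sigma_2$, $\sigma_1\sigma_2$ and $\sigma_2\sigma_1$, so that every possible ``collision'' between two consecutive factors can be listed by hand. Suppose $x=\Delta^px_1\cdots x_r$ with each $x_i$ a non-trivial simple element different from $\Delta$, but suppose that this expression is \emph{not} the left normal form of $x$. Since each $x_i$ is non-trivial and $\neq\Delta$, and since the pair $(\Delta,x_1)$ is automatically left-weighted (because $\partial(\Delta)=1$), the only possible defect is that the pair $(x_i,x_{i+1})$ fails to be left-weighted for some index $i$ with $1\leqslant i\leqslant r-1$; in particular $r\geqslant 2$.

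The heart of the matter --- and the place where three strands are genuinely used --- is the following elementary observation: if $a$ and $b$ are non-trivial simple elements of $B_3$ different from $\Delta$ and $(a,b)$ is not left-weighted, then either $ab$ is again a non-trivial simple element different from $\Delta$, or else $ab=\Delta c$ for some simple element $c$. I would establish this by direct inspection of the finitely many ordered pairs $(a,b)$ with $a,b\in\{\sigma_1,\sigma_2,\sigma_1\sigma_2,\sigma_2\sigma_1\}$: such a pair fails to be left-weighted exactly when the last Artin generator of $a$ is different from the first Artin generator of $b$, and running through these few cases one finds $ab\in\{\sigma_1\sigma_2,\sigma_2\sigma_1,\Delta,\Delta\sigma_1,\Delta\sigma_2\}$. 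One can shorten the check by first noting that $ab$ has length at most $4$ in the generators $\sigma_i$, hence $\inf(ab)\leqslant 1$, but the substantive content is that with only two Artin generators there is simply no room to rewrite a product of two proper simples as a \emph{different} product of two proper simples --- so a non-left-weighted pair always genuinely collapses rather than merely rearranging. Making sure this observation is airtight is the only real obstacle.

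Granting the observation, the proof concludes quickly. In the first case ($x_ix_{i+1}$ a simple element), $x=\Delta^p\,x_1\cdots x_{i-1}\,(x_ix_{i+1})\,x_{i+2}\cdots x_r$ exhibits $x$ as $\Delta^p$ times a product of $r-1$ simple elements, so $\sup(x)\leqslant p+r-1$; since moreover $\Delta^{-p}x$ is a positive braid we have $\inf(x)\geqslant p$, whence $\ell(x)\leqslant r-1$, contradicting $\ell(x)=r$. In the second case ($x_ix_{i+1}=\Delta c$), repeatedly using the identity $w\Delta=\Delta\tau^{-1}(w)$ we may rewrite $x=\Delta^{p+1}\,\tau^{-1}(x_1)\cdots\tau^{-1}(x_{i-1})\,c\,x_{i+2}\cdots x_r$, which displays $x$ as $\Delta^{p+1}$ times a positive braid, so $\inf(x)\geqslant p+1$, contradicting $\inf(x)=p$. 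Either way we obtain a contradiction, so $\Delta^px_1\cdots x_r$ must be the left normal form of $x$. Everything outside the middle paragraph is routine bookkeeping with $\inf$, $\sup$ and the prefix order.
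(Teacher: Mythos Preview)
Your argument is correct and follows the same line as the paper's proof: assume some consecutive pair $(x_i,x_{i+1})$ is not left-weighted, and derive a contradiction with either $\inf(x)=p$ or $\sup(x)=p+r$. The paper's proof is a single sentence asserting exactly this dichotomy, while you supply the explicit case check (that in $B_3$ a non-left-weighted product of two proper simples is either a single proper simple or $\Delta$ times a simple) which the paper leaves implicit.
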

\begin{proof}
If it was not, making the pair $x_ix_{i+1}$ left-weighted (for some $1\leqslant i\leqslant r-1$) either would create one factor $\Delta$ (contradicting $\inf(x)=p$) or would decrease the number of factors (contradicting $\sup(x)=r+p$), or both.
\end{proof}

\begin{proposition}
\label{rigid}
Let $x$ be a 3-braid with $\ell_s(x)>1$. Then $SSS(x)$ is the set of rigid conjugates of $x$.
\end{proposition}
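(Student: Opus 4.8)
The plan is to prove the two inclusions separately, writing $r=\ell_s(x)$ (so $r\geqslant 2$ by hypothesis), and using the classical facts that cycling never increases the canonical length nor decreases the $\inf$ of a braid, and that iterated cycling, respectively decycling, eventually realises $\inf_s$, respectively $\sup_s$, within the conjugacy class \cite{elrifaimorton}.

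For the inclusion $SSS(x)\subseteq\{\text{rigid conjugates of }x\}$, I would take $y\in SSS(x)$ with left normal form $y=\Delta^p y_1\cdots y_r$, where $p=\inf_s(x)$ and each $y_i$ is one of the four nontrivial simple $3$-braids different from $\Delta$. Applying one cycling step, the braid $\mathbf c(y)=\Delta^p y_2\cdots y_r\,\tau^{-p}(y_1)$ is again a conjugate of $x$, so its canonical length is $\geqslant r$; since cycling does not increase canonical length, it equals $r$; and $p\leqslant\inf(\mathbf c(y))\leqslant\inf_s(x)=p$, so $\inf(\mathbf c(y))=p$. Hence the positive braid $\Delta^{-p}\mathbf c(y)=y_2\cdots y_r\,\tau^{-p}(y_1)$ has $\inf=0$, canonical length $r$, and is exhibited as a product of $r$ nontrivial simple factors different from $\Delta$ (note that $\tau^{-p}(y_1)$ is again such a factor). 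By Proposition~\ref{formenormale3} this product \emph{is} its left normal form. Since $r\geqslant 2$, the factor $y_r$ is immediately followed in this normal form by $\tau^{-p}(y_1)$, so the pair $y_r\,\tau^{-p}(y_1)$ is left-weighted, which is precisely the assertion that $y$ is rigid. This is the step where the hypothesis $\ell_s(x)>1$ is essential: for $r=1$ the product $y_2\cdots y_r\,\tau^{-p}(y_1)$ reduces to the single factor $\tau^{-p}(y_1)$ and the argument is empty — and the statement itself then fails (for instance $\sigma_1\sigma_2\in B_3$ has $\ell_s=1$, while $SSS(\sigma_1\sigma_2)=\{\sigma_1\sigma_2,\sigma_2\sigma_1\}$ contains no rigid element).

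For the reverse inclusion I would take a rigid conjugate $y$ of $x$, so that $\ell(y)\geqslant r\geqslant 2$, with left normal form $y=\Delta^p y_1\cdots y_m$; rigidity means the pair $y_m\,\tau^{-p}(y_1)$ is left-weighted. Then $\mathbf c(y)=\Delta^p y_2\cdots y_m\,\tau^{-p}(y_1)$ visibly meets the conditions of a left normal form — consecutive pairs left-weighted, no factor trivial or equal to $\Delta$ — so $\inf(\mathbf c(y))=p$, and $\mathbf c(y)$ is itself rigid since $\tau^{-p}(y_1)\,\tau^{-p}(y_2)=\tau^{-p}(y_1 y_2)$ is left-weighted. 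Iterating, $\inf(\mathbf c^k(y))=p$ for every $k$, hence $\inf_s(x)=p=\inf(y)$; the symmetric argument with decycling gives $\sup_s(x)=\sup(y)$. Thus $y$ realises simultaneously the maximal $\inf$ and the minimal $\sup$ in its conjugacy class, i.e.\ $y\in SSS(x)$.

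I do not expect a serious obstacle here. The only points requiring care are the correct bookkeeping of what cycling and decycling preserve on a rigid braid — giving the direction ``rigid $\Rightarrow$ summit'', which is in any case a standard fact — and the genuinely essential use of $r\geqslant 2$ in the first inclusion. The real content is Proposition~\ref{formenormale3}, which guarantees that in $B_3$ a product $\Delta^p y_1\cdots y_r$ of nontrivial simple non-$\Delta$ factors is already in left normal form as soon as $p=\inf$ and $r=\ell$; it is this special rigidity feature of the $B_3$ normal form that drives the first inclusion.
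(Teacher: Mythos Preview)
Your proof is correct and follows essentially the same approach as the paper. For the inclusion $SSS(x)\subseteq\{\text{rigid}\}$ you and the paper both cycle once and use that a drop in canonical length would contradict $y\in SSS(x)$; you phrase this via Proposition~\ref{formenormale3} (same length and $\inf$ force the cycled product to \emph{be} the normal form, hence $y_r\,\tau^{-p}(y_1)$ is left-weighted), while the paper phrases the contrapositive (non-left-weightedness would force a shorter normal form) --- these are the same argument. For the reverse inclusion the paper is slightly more economical: since a rigid braid satisfies $\mathfrak s(y)=y$, Theorem~\ref{bkl} immediately gives $y\in SSS(x)$; your cycling/decycling argument via \cite{elrifaimorton} is an equally standard alternative.
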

\begin{proof}
Let $y$ be an element of $SSS(x)$, written $y=\Delta^py_1\ldots y_r$ in left normal form, and suppose that $y$ is not rigid. 
Consider the conjugate $$z=(\tau^{-p}(y_1))^{-1}y\tau^{-p}(y_1)=\Delta^p y_2\ldots y_{r}\tau^{-p}(y_1)$$ of $y$. As in the proof of Proposition~\ref{formenormale3}, the non left-weightedness of the pair $y_r\tau^{-p}(y_1)$ implies that $z$ has smaller canonical length than 
$y$. This is a contradiction because $y\in SSS(x)$. 

Conversely, according to Theorem \ref{bkl}, a rigid braid $y$ conjugate to $x$ belongs to $SSS(x)$ since $\mathfrak s(y)=y$.
\end{proof}

\begin{proposition}
\label{P:reducibilityb3}
The reducibility problem as well as the conjugacy search problem can be solved in polynomial time for three-strands braids.
\end{proposition}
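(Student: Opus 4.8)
The plan is to show that for three strands the super summit set has size polynomial in the word length, so that the classical Garside machinery solves the conjugacy search problem in polynomial time, and that reducibility can then be detected directly by means of Corollary~\ref{C:RoundInSomeSSS}. Given a $3$-braid $x$, I first use Theorem~\ref{bkl} to compute, in time $O(\ell(x)^2)$ (here $n=3$, so the factor $n^2$ is absorbed), an element $x'\in SSS(x)$ together with a conjugating element $g$ with $x'=g^{-1}xg$, and I then argue according to the value of $\ell_s(x)=\ell(x')$. If $\ell(x')=0$, then $x$ is conjugate to $\Delta^{\inf(x)}$, which is the only element of $SSS(x)$. If $\ell(x')=1$, then every element of $SSS(x)$ has $\inf$ equal to $\inf(x)$ and canonical length one, hence by Proposition~\ref{formenormale3} equals $\Delta^{\inf(x)}s$ for one of the four simple $3$-braids $s\notin\{1,\Delta\}$; thus $|SSS(x)|\leqslant 4$, and all of $SSS(x)$, with conjugating elements, is obtained by conjugating $x'$ by the finitely many simple braids. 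If $\ell(x')\geqslant 2$, then by Proposition~\ref{rigid} every element of $SSS(x)$ is rigid, and the key claim is that a rigid $3$-braid has at most $2\,\ell_s(x)$ rigid conjugates, namely the cyclic permutations of its left normal form together with their images under $\tau$; the matching conjugating elements (initial segments of the normal form, and the element $\Delta$) are written down at once. In all three cases $SSS(x)$ is thus computed, with conjugating elements, in time polynomial in $\ell(x)$, and has polynomially (in fact linearly) many elements.

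Conjugacy search is then immediate. Given $x$ and $y$, compute $x'\in SSS(x)$, $y'\in SSS(y)$, and the whole list $SSS(x)$ with conjugating elements. Two braids are conjugate if and only if $SSS(x)=SSS(y)$, equivalently $y'$ occurs in the list $SSS(x)$; in that case, composing the recorded conjugator from $x$ to $x'$, the one from $x'$ to $y'$ inside $SSS(x)$, and the inverse of the one from $y$ to $y'$, produces an explicit conjugator from $x$ to $y$.

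For reducibility, I use that in $D_3$ every nondegenerate simple closed curve surrounds exactly two punctures, and that of these the round ones are exactly the geometric circles $\mathcal C$ around $\{1,2\}$ and $\mathcal C'$ around $\{2,3\}$ (no circle encloses exactly $\{1,3\}$, the puncture $2$ lying between them). If $\ell(x')=0$, then $x$ is periodic, hence not reducible. Otherwise I claim that $x$ is reducible if and only if some element of $SSS(x)$ fixes $\mathcal C$ or $\mathcal C'$. Indeed, a reducible $3$-braid has exactly one essential reduction curve (two disjoint curves each surrounding two of the three punctures are isotopic), so that curve is fixed; by Corollary~\ref{C:RoundInSomeSSS} some element of $SSS(x)$ admits such a curve which is moreover round, hence equal to $\mathcal C$ or $\mathcal C'$. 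Conversely, if an element $z\in SSS(x)$ fixes $\mathcal C$ or $\mathcal C'$, then $z$ preserves a nondegenerate curve, so $z$ is reducible or periodic; but a periodic $3$-braid with nonzero summit canonical length acts on the punctures by a $3$-cycle and hence preserves no curve surrounding two punctures, so $z$, and therefore $x$, is reducible. Since $|SSS(x)|$ is polynomial and since, for a fixed element of $SSS(x)$, deciding whether it fixes $\mathcal C$ or $\mathcal C'$ amounts to applying the braid to the curve and so takes polynomial time, the whole procedure runs in polynomial time.

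The main obstacle is the linear bound $|SSS(x)|\leqslant 2\,\ell_s(x)$ in the case $\ell_s(x)\geqslant 2$, where by Proposition~\ref{rigid} every element of $SSS(x)$ is rigid. I expect to establish it by a short combinatorial analysis: since there are only four simple $3$-braids other than $1$ and $\Delta$, inspection of the left-weightedness conditions should show that conjugating a rigid braid by a simple element so as to remain rigid can only cyclically permute its left normal form (up to a $\tau$-twist dictated by the parity of $\inf$); thus the rigid conjugates of a rigid $3$-braid are exactly the cyclic permutations of its normal form and their $\tau$-twists, of which there are at most $2\,\ell_s(x)$. The remaining points — that periodic $3$-braids with $\ell_s=1$ induce $3$-cycles on the punctures, the bookkeeping of the $\tau$-twists in the cyclic permutations, and the routine estimate for the cost of applying a braid to a simple closed curve — present no difficulty.
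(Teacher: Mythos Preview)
Your proposal is correct and follows essentially the same route as the paper: both reduce everything to the linear bound $|SSS(x)|\leqslant 2\,\ell_s(x)$, obtained (for $\ell_s\geqslant 2$) by showing that the rigid conjugates of a rigid $3$-braid are exactly its cyclings together with their $\tau$-images, and then let the standard Garside machinery handle conjugacy and reducibility. The paper proves that bound by invoking the connectivity of $SSS$ under minimal simple conjugators (Franco--Gonz\'alez-Meneses) and the description of such conjugators from~\cite{bggm2}, so when you carry out your ``short combinatorial analysis'' be sure to invoke that connectivity result explicitly---your local claim about simple conjugation only constrains neighbours in the $SSS$-graph, not the whole set.
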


\begin{proof}
We claim that in the particular case of 3-strand braids, the size of the Super Summit Set of a braid $x$
is bounded above by $2\cdot\ell_s(x)$. Thus computing the whole Super Summit Set of any 3-braid is doable in polynomial time according 
to Proposition 6.2 in \cite{francomeneses} and consequently, the 
reducibility problem and the conjugacy search problem can be solved quickly.

Let us prove the claim we made. 
First, the $SSS$ of elements of canonical length~0 (i.e.\ powers of $\Delta$) contain only one element. Then, any braid of canonical length~1
is an element of its Super Summit Set since it is not conjugate to a power of $\Delta$ and thus the canonical length
is already minimal within the conjugacy class. Thus as both letters $\sigma_1$ and $\sigma_2$ are conjugate to each other (by $\Delta$)
and because $\inf$ has constant value in the Super Summit Set
one has $$SSS(\Delta^k\sigma_1)=\{\Delta^k\sigma_1,\Delta^k\sigma_2\},$$
$$SSS(\Delta^k\sigma_1\sigma_2)=\{\Delta^k\sigma_1\sigma_2,\Delta^k\sigma_2\sigma_1\},$$ for any $k\in \mathbb Z$. This proves the claim
for braids of canonical length~1.

Suppose then that we are given a 3-braid $x$ of summit canonical length $\ell>1$ together with an element $y=\Delta^py_1\ldots y_{\ell}$ (in left normal form) of $SSS(x)$.
Recall the cycling operation, which consists, for a general braid $u=\Delta^pu_1\ldots u_r$ in left normal form, of 
the conjugation $\mathbf c(u)=
(\tau^{-p}(u_1))^{-1}u\tau^{-p}(u_1)$ 
\cite{elrifaimorton}.
Applied to our braid~$y$, we have $\mathbf c(y)=\Delta^py_2\ldots y_{\ell}\tau^{-p}(y_1)$, and since $y$ is rigid (by Proposition~\ref{rigid}), $c(y)$ is in left normal form as written and rigid.
Notice that~$\tau(y)$ as well belongs to $SSS(x)$. Now consider the first $\ell$ cyclings of both~$y$ and~$\tau(y)$. 
This produces at most $2.\ell$ elements of $SSS(x)$. More precisely
we have $\mathbf c^{\ell}(y)=\tau^{-p}(y)$, which is~$y$ if $p$ is even and $\tau(y)$ if $p$ is odd (recall that~$\Delta^2$ is central). Thus we have just computed either two closed orbits under cycling, conjugate to each other by $\Delta$, or one closed orbit under cycling, self conjugated by~$\Delta$. We thus have a subset $\mathcal O$ of $SSS(x)$ closed by cycling and conjugation by $\Delta$. By Proposition 4.14 in \cite{francomeneses}, if the inclusion $\mathcal O\subset SSS(x)$ is strict, then there must exist another element
$v \in SSS(x)$ which can be obtained by conjugation of some 
$z=\Delta^pz_1\ldots z_{\ell}\in \mathcal O$ by a minimal conjugator~$s$. This minimal conjugator is a simple, and is shown in in the proof of Corollary 2.7 in \cite{bggm2} to be either a prefix of $\tau^{-p}(z_1)$ or of $\partial(z_r)$. 
However, for the rigid 3-braid $z$, conjugating by a strict prefix of either
$\tau^{-p}(z_1)$ or of $\partial (z_\ell)$ would increase the canonical length. Thus $s$ must actually be the whole factor: 
either $s=\tau^{-p}(z_1)$ (in which case $v=\mathbf c(z)$) or $s=\partial(z_{\ell})$. 
In the latter case, 
$$\partial(z_{\ell})^{-1}z\partial(z_{\ell})=
\Delta^p\ \tau^{p+1}(z_\ell) \tau(z_1) \ldots \tau(z_{\ell-1})=
\mathbf c^{\ell-1}(\tau^{p+1}(z)).$$ 
This shows that we already had all the elements of $SSS(x)$ in $\mathcal O$. 
Since $\mathcal O$ has at most $2.\ell$ elements, the claim is proven.
\end{proof}


\section{Proposition \ref{3punctures} implies Theorem \ref{main}}
In this section we prove Theorem \ref{main} with the aid of Proposition \ref{3punctures}.
The first observation is that there are only 2 round curves and 4 almost-round curves surrounding~3 punctures in $D_4$ (see Figure \ref{rond}).  
\begin{figure}[htb]

\centerline{\input{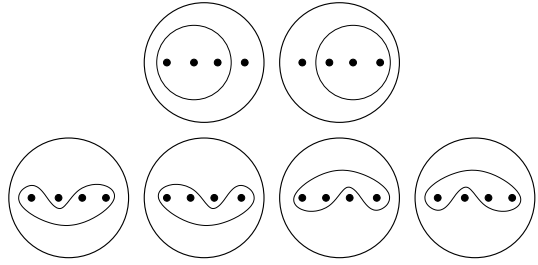}}
\caption{Above, the round curves surrounding 3 punctures in $D_4$; below, the set of curves obtained from the above ones by applying a simple braid; i.e. the set of almost-round curves surrounding 3 punctures.}\label{rond}
\end{figure}

\begin{lemma}\label{L:SphereDiskTransl}
There is an algorithm which decides in time $O({\mathfrak l}^2)$ whether a 4-braid~$x$ given as a word of length $\mathfrak l$ in Artin's generators, is reducible with an essential reduction curve surrounding 3 punctures.
\end{lemma}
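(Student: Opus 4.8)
The plan is to reduce the question ``is $x$ reducible with an essential reduction curve around $3$ punctures?'' to a finite, bounded search, using Proposition~\ref{3punctures} together with the Garside-theoretic machinery recalled above. First I would pass from $x$ to an element $x'\in SSS(x)$ of the prescribed form $x'=\mathfrak s^k(x)$ by Theorem~\ref{bkl}; this costs $O(\mathfrak l^2)$ (here $n=4$ is fixed, so the $n^2$ factor is a constant, and $\ell(x')\leqslant \ell(x)=O(\mathfrak l)$). Reducibility is conjugacy-invariant and the property of having an essential reduction curve around $3$ punctures is also conjugacy-invariant (indeed such a curve is fixed, not permuted, by Proposition~\ref{3punctures}'s preceding remark), so it suffices to test $x'$.

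Next, I would invoke Proposition~\ref{3punctures}: if $x'$ has an essential reduction curve surrounding $3$ punctures, that curve is round or almost round, and by the opening observation of this section there are exactly $2+4=6$ such curves in $D_4$ (Figure~\ref{rond}). So the algorithm enumerates these six explicit curves $\mathcal C_1,\ldots,\mathcal C_6$ and, for each, checks whether $\mathcal C_j*x'=\mathcal C_j$. The key point is that one such equality holds if and only if $x'$ is reducible with a reduction curve surrounding $3$ punctures: the ``if'' direction is immediate, and the ``only if'' direction is exactly Proposition~\ref{3punctures}. The number of curves to test is a fixed constant, so the total cost is a constant times the cost of one curve-image computation.

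It remains to bound the cost of computing $\mathcal C_j*x'$ and deciding whether it equals $\mathcal C_j$. I would represent a curve in $D_4$ by a suitable combinatorial coordinate (for instance, Dynnikov coordinates, or simply tracking the curve's intersection data with a fixed arc system), for which the action of a single Artin generator $\sigma_i^{\pm 1}$ is computed by an explicit piecewise-linear update in constant time and keeps the bit-size of the coordinates growing at most linearly in the number of generators applied. Since $x'$ has word length $O(\mathfrak l)$ in the $\sigma_i$, applying it letter by letter to $\mathcal C_j$ takes $O(\mathfrak l)$ arithmetic operations on integers of $O(\mathfrak l)$ bits, i.e.\ $O(\mathfrak l^2)$ bit-operations; comparing the final coordinates to the (fixed, bounded) coordinates of $\mathcal C_j$ is then cheap. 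Doing this for each of the six curves and for the preliminary computation of $x'$ keeps the whole procedure within $O(\mathfrak l^2)$.

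The main obstacle I anticipate is the bookkeeping in the last paragraph: one must be careful that the normal-form computation producing $x'$ does not blow up the word length (it does not, since $\ell(x')\leqslant\ell(x)$ and $\inf,\sup$ are controlled, so $x'$ as a word in the $\sigma_i$ has length $O(\mathfrak l)$), and that the chosen curve coordinates genuinely admit a constant-time generator action with only linear coordinate growth, so that the curve-tracking step is quadratic rather than cubic. Everything else is a finite check enabled by Proposition~\ref{3punctures}.
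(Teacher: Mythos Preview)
Your proposal is correct and follows the same overall strategy as the paper: compute an element $x'\in SSS(x)$ in $O(\mathfrak l^2)$ via Theorem~\ref{bkl}, then use Proposition~\ref{3punctures} to reduce the question to checking whether $x'$ preserves one of the six curves of Figure~\ref{rond}.

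The one genuine difference lies in how you perform that last check. You track each of the six curves through $x'$ using Dynnikov-type coordinates, relying on the (standard but external) fact that the updates are piecewise-linear with linearly growing bit-size, which gives $O(\mathfrak l^2)$ for the curve test. The paper instead invokes Proposition~\ref{gebhardt}: since $x'$ fixes a curve of complexity at most~$1$, every partial image $\mathcal C * \Delta^p x_1\cdots x_i$ must again have complexity at most~$1$, hence lies in the same \emph{finite} set of six curves. One then simply applies each Garside factor of $x'$ and checks at each step whether the image is still among the six; if it ever leaves this finite set, the curve is not preserved. This makes each step a constant-time lookup, so the curve test costs only $O(\mathfrak l)$, and no external coordinate machinery or bit-complexity analysis is needed. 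Both routes yield the claimed $O(\mathfrak l^2)$ overall, but the paper's is self-contained and exploits Proposition~\ref{gebhardt}, which is already available.
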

\begin{proof}
We recall that $\ell(x)\leqslant\mathfrak l(x)\leqslant6\ell(x)$. Now according to Theorem \ref{bkl}, an element~$y$ of $SSS(x)$ can be computed in time $O(\mathfrak l^2)$. According to Proposition~\ref{3punctures},~$x$ admits an essential reduction curve surrounding 3 punctures if and only if $y$ 
preserves one of the six curves in Figure \ref{rond}. The check whether this is the case 
can be performed in time $O(\mathfrak l)$: according to Proposition \ref{gebhardt}, it is sufficient to test whether the images of these curves under each successive Garside factor of $y$ are still 
round or almost round.
\end{proof}

In order to overcome the fact that we don't have any analogue of Proposition~\ref{3punctures} for curves surrounding 2 punctures, we will consider 4-braids as mapping classes of a~${\text{5-punctured}}$ sphere by collapsing the boundary of $D_4$. The five punctures lie on the equator and we shall number them from 1 to 5, the fifth being the new puncture, on the far side of the sphere.

In the following construction $x$ will be supposed to be pure; this assumption is fullfilled up to taking a power 2, 3 or 4. Then for $j=1,\ldots ,4$, blowing up the $j$th puncture to become the boundary of a new 4-punctured disk $D_4$ yields from $x$ a new 4-braid which we will denote by $\tilde{x}_j$.

\newcommand{\xj}{\tilde{x}_j}
\begin{lemma}\label{sphere}
For $j=1,\ldots, 4$, the length $\mathfrak l(\xj)$ of $\xj$ in Artin's generators is bounded above by $3\mathfrak l(x)$.
\end{lemma}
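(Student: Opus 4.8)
\textbf{Proof proposal for Lemma \ref{sphere}.}

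The plan is to track, generator by generator, how a word in the Artin generators of the 4-braid $x$ (viewed as a mapping class of $D_4$ with its $j$th puncture blown up to a boundary circle) translates into a word in the Artin generators of the new 4-braid $\tilde x_j$ on the disk $D_4$ obtained after the blow-up. Since $x$ is pure, each crossing $\sigma_i^{\pm 1}$ in a word for $x$ exchanges two punctures and then brings them back; the crucial point is to bound, uniformly in $i$ and $j$, the number of Artin generators of $\tilde x_j$ needed to encode the effect of a single such crossing when one of the four strands has been fattened into a disk carrying four sub-strands.

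First I would fix a word $w = \sigma_{i_1}^{\epsilon_1}\cdots \sigma_{i_{\mathfrak l}}^{\epsilon_{\mathfrak l}}$ of length $\mathfrak l = \mathfrak l(x)$ representing $x$, and describe the blow-up concretely: replace the $j$th puncture by a small disk $D$ containing the four punctures that will be the strands of $\tilde x_j$, and follow the motion prescribed by $w$. A letter $\sigma_i^{\pm1}$ that does not involve the $j$th strand is simply copied (contributing one letter to $\tilde x_j$). A letter $\sigma_i^{\pm1}$ that moves the fattened disk $D$ past a neighbouring ordinary puncture drags the whole bundle of four sub-strands across that puncture; up to isotopy this is realised by a band generator / a short product of band generators, which in turn is a bounded-length word in the $\sigma_k^{\pm1}$. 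The key numerical check is that this bounded length is at most $3$: a half-twist of a block of strands past a single adjacent strand costs at most three classical Artin generators in $B_4$ (indeed the relevant band generators $a_{k,l}$ of $B_4$ are already products of at most three $\sigma_k$'s). Carrying this out carefully, each of the $\mathfrak l$ letters of $w$ produces at most $3$ letters of a word for $\tilde x_j$, giving $\mathfrak l(\tilde x_j)\leqslant 3\mathfrak l(x)$.

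I expect the main obstacle to be making the ``dragging a fattened strand past an adjacent puncture'' step precise enough that the constant $3$ is genuinely correct rather than merely ``bounded''. One has to be slightly careful that the four sub-strands inside $D$ do not get braided among themselves by this operation (they do not, since the disk moves rigidly), and that moving $D$ itself only ever happens past a \emph{single} adjacent puncture at a time, so that the cost per original letter does not accumulate. Once the local move is pinned down and its cost verified to be at most $3$ in each of the finitely many configurations (which strand is fattened, whether the crossing is to its left, to its right, or not adjacent to it at all), the global bound $\mathfrak l(\tilde x_j)\leqslant 3\mathfrak l(x)$ follows immediately by summing over the $\mathfrak l$ letters of $w$.
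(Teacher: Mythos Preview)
Your overall strategy---translate $x$ letter by letter and show each Artin generator contributes at most three generators to $\tilde x_j$---is exactly the paper's strategy, and it is the right one. But your geometric description of the construction of $\tilde x_j$ is incorrect, and as a result your justification for the constant~$3$ does not work.

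You write that one should ``replace the $j$th puncture by a small disk $D$ containing the four punctures that will be the strands of $\tilde x_j$'', and you speak of ``dragging the whole bundle of four sub-strands'' and of ``the four sub-strands inside $D$''. This is a cabling picture (replacing one strand by a tube carrying four strands), and it is \emph{not} what $\tilde x_j$ is. In the paper's construction one collapses $\partial D_4$ to a fifth puncture on the sphere and then blows up the $j$th puncture to become the \emph{new} boundary. So the $j$th strand does not carry four sub-strands; it disappears into the boundary, and the four strands of $\tilde x_j$ are the three remaining original punctures together with the new puncture coming from the old boundary. Nothing moves ``rigidly as a bundle''.

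With the correct picture, the bound~$3$ arises for a different reason than the one you give. A generator $\sigma_i^{\pm1}$ not involving the $j$th puncture swaps two punctures that remain adjacent after the equatorial relabeling, so it contributes exactly one letter. A generator $\sigma_{j-1}^{\pm1}$ or $\sigma_j^{\pm1}$ makes a puncture do a half-twist with what is now the boundary circle; seen in the new disk, that puncture sweeps once around the boundary, passing over (or under) all three other punctures, which costs exactly three Artin letters (e.g.\ $\sigma_{j-1}^{\pm1}\leadsto\sigma_3^{\mp1}\sigma_2^{\mp1}\sigma_1^{\mp1}$). Your appeal to ``band generators of $B_4$ are products of at most three $\sigma_k$'s'' is not the mechanism at work (and is in any case false for $a_{1,4}$). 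Once you correct the model, the letter-by-letter count you propose goes through immediately; you must also, as the paper notes, update the current position of the $j$th puncture after each letter, since the label ``$j$'' refers to a moving strand.
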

\begin{proof}
We show that each letter of $x$ gives rise to no more than 3 letters. Let us describe in detail how the first letter of $x$ is transformed. In order to identify 
the $j$th puncture with the new boundary  we make a rotation of the equator bringing 
the $j$th puncture behind the sphere at the place of the fifth, and then we renumber the punctures following the rule:
\begin{eqnarray*}
i& \leadsto & \ {i-j+5}\,\ \ \textrm{if} \,\ \ \,i\leqslant j,\\
i& \leadsto & \ {i-j} \,\ \ \ \ \ \ \ \textrm{if}\ \ \ i>j.
\end{eqnarray*}

If the first letter of $x$ is $\sigma_i$, for $1\leqslant i\leqslant j-2$ or for $j+1\leqslant i\leqslant n-1$, then its image in $\tilde{x}_j$ is easy to compute: it is $\sigma_{i-j+5}$ or~$\sigma_{i-j}$, respectively. If $j\geqslant 2$ and the first letter of $x$ is $\sigma_{j-1}$, then this corresponds to a move of the puncture numbered~${j-1}$ which goes to the right above the puncture numbered~$j$. After our rotation the corresponding move involves the fourth puncture which goes above the other punctures to the first position. This corresponds to the braid~${\sigma_3^{-1}\sigma_2^{-1}\sigma_1^{-1}}$. 
In a similar way we can compute the images of all Artin's generators:
\begin{eqnarray*}
\sigma_i^{\pm1} & \leadsto & \sigma_{i-j+5}^{\pm1}\,\ \ \textrm{if} \,\ \ \,i<j-1,\\
\sigma_i^{\pm 1} & \leadsto & \sigma_{i-j}^{\pm 1} \ \ \ \ \  \ \textrm{if}\ \ \ i>j,\\
\sigma_{j-1}^{\pm 1} & \leadsto &\sigma_3^{\mp 1}\sigma_2^{\mp 1}\sigma_1^{\mp 1} \ \ \ \ (\text{for $j\geqslant 2$}),\\
\sigma_j^{\pm 1} & \leadsto & \sigma_1^{\mp 1}\sigma_2^{\mp 1}\sigma_3^{\mp 1} \ \ \ \ (\text{for $j\leqslant 3$}).
\end{eqnarray*}
Notice that the first letter of $x$ induces a permutation of the punctures which possibly sends the puncture numbered $j$ to another position. Thus computing the image of the second letter of $x$ with the aid of the above formulae requires a renumbering of the punctures, according to the permutation involved. The images of the following letters of $x$ are computed in the same way. 
\end{proof}

\begin{lemma}
Let $x$ be a reducible braid without any essential reduction curve surrounding 3 punctures. Then there exists $j$ between 1 and 4 such that $\xj$ is reducible with an essential reduction curve surrounding 3 punctures.
\end{lemma}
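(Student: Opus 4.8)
The plan is to transport the canonical reduction system of $x$ to a five-punctured sphere and, for a suitably chosen $j$, read it back on the four-punctured disc carrying $\tilde{x}_j$.

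First I would replace $x$ by a power if necessary so that $x$ is pure and fixes each curve of $CRS(x)$: this affects neither $CRS(x)$ nor the existence of an essential reduction curve surrounding three punctures, and it is compatible with the operation $x\mapsto\tilde{x}_j$. Then, capping $\partial D_4$ with a once-punctured disc (whose puncture is numbered~$5$) turns $x$ into a mapping class $\bar x$ of the five-punctured sphere. The two structural facts I would establish are: (i) capping a boundary component to a puncture induces an equivariant bijection from nondegenerate simple closed curves of the disc to essential simple closed curves of the sphere which \emph{preserves geometric intersection numbers} --- because a reducing bigon on the sphere contains no puncture and hence already lies in the disc; in particular $CRS(x)$ is carried exactly onto the set of essential reduction curves of $\bar x$, and $\bar x$ is reducible nonperiodic, periodicity being excluded since the full twist $\Delta^2$ (the Dehn twist about $\partial D_4$) caps to the identity, so a periodic $\bar x$ would force $x$ to be periodic; (ii) the same applies verbatim to each of the four cappings recovering $\bar x$ from $\tilde{x}_j$, with the roles of the punctures $5$ and $j$ exchanged.

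Now I would choose $j$. Since $x$ is reducible nonperiodic, $CRS(x)\neq\varnothing$, and by hypothesis every curve $\mathcal C\in CRS(x)$ surrounds exactly two punctures of $D_4$; on the sphere $\mathcal C$ therefore separates two of the punctures $1,\dots,4$ from the remaining two together with puncture~$5$. Fix such a $\mathcal C$, let $j\in\{1,\dots,4\}$ be one of the two punctures on the ``small'' side of $\mathcal C$, and blow up $j$ to obtain the disc carrying $\tilde{x}_j$. Under this blow-up $\mathcal C$ becomes a curve surrounding the three punctures on its other side, and since $\bar x$ fixes $\mathcal C$ so does $\tilde{x}_j$; thus $\mathcal C$ is a reduction curve of $\tilde{x}_j$ surrounding three punctures. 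To see it is essential for $\tilde{x}_j$, suppose a reduction curve $\mathcal D$ of $\tilde{x}_j$ crosses it; transporting $\mathcal D$ back to $D_4$ through the two cappings (first capping $j$, then identifying sphere curves with disc curves via capping $5$) yields a reduction curve of the original $x$, and since intersection numbers are preserved all along, it still crosses $\mathcal C$ --- contradicting $\mathcal C\in CRS(x)$. Hence $\mathcal C\in CRS(\tilde{x}_j)$. Finally $\tilde{x}_j$ is itself nonperiodic (a periodic pure $4$-braid is a power of $\Delta^2$, which caps to the identity, again forcing $x$ periodic), so $\tilde{x}_j$ is reducible with an essential reduction curve surrounding three punctures.

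The part I expect to require genuine care is the surface-topology bookkeeping of points (i)--(ii): that capping a boundary to a puncture is an equivariant bijection on the relevant curves which neither creates nor destroys intersection points, so that both ``reduction curve'' and ``essential'' transfer in both directions, together with the remark that periodicity of any of $x$, $\bar x$, $\tilde{x}_j$ collapses $x$ onto a power of $\Delta^2$. Granting these, the choice of $j$ and the closing contradiction are a short combinatorial check on how the five punctures are distributed.
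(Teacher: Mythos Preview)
Your proposal is correct and follows the same route as the paper: pass to the five-punctured sphere, pick an essential reduction curve of $x$ surrounding two punctures, and blow up one of those two punctures to become the new boundary, so that the curve now encloses three punctures in the disc carrying $\tilde{x}_j$. The paper's proof is in fact much terser than yours---it simply asserts that reducibility of $x$ is equivalent to reducibility of each $\tilde{x}_j$ and that blowing up a puncture on the two-puncture side ``achieves the proof'', without explicitly checking that the resulting curve is \emph{essential} for $\tilde{x}_j$ or that $\tilde{x}_j$ is nonperiodic. Your extra bookkeeping (the intersection-number preservation under capping, and the observation that a periodic pure $\tilde{x}_j$ would cap to the identity on the sphere and hence force $x$ to be a power of $\Delta^2$) fills precisely those gaps, so your version is a legitimate strengthening of the paper's argument rather than a different approach.
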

\begin{proof}
Note that the reducibility of $x$ is equivalent to the reducibility of each $\tilde{x}_j$ for~${j=1,\ldots,4}$. Now under the assumption of the lemma, $x$ admits an essential reduction curve surrounding 2 punctures. After collapsing the boundary of $D_4$ this curve divides the sphere into two connected components, one with 2 punctures, the other with 3 punctures. Blowing up one of the first two to the new boundary achieves the proof of the lemma.
\end{proof}

To conclude the proof of Theorem \ref{main}, we can give the algorithm which solves the reducibility problem in $B_4$.
\begin{itemize}
\item [0]INPUT: A braid word $x$ in the letters $\sigma_i^{\pm 1}$.
\item[1] Compute a pure power $x^t$ of $x$ and do $x:=x^t$.
\item [2] Compute the left normal form of $x$. 
\item [3] Test whether $x$ is periodic (that is test if $x$ is a power of $\Delta^2$). If yes, then RETURN ``$x$ periodic" and STOP. Else go to 4.
\item [4] Apply iterated cyclic sliding to $x$ until the canonical length has not decreased during the last five iterations.

\item [5]For the element of $SSS(x)$ obtained at the previous step, test whether it has a round or almost round essential reduction curve surrounding 3 punctures. If yes, RETURN ``$x$ reducible" and STOP. If not go to the following step.
\item [6]For $j=1,\ldots,4$, compute $\tilde{x}_j$, apply to it steps 2 and 4. Test whether the element of $SSS({\tilde{x}}_j)$ thus obtained has a round or almost round essential reduction curve surrounding 3 punctures; if the answer is positive for some~$j$, RETURN ``$x$~reducible" and STOP. If the answer is negative for all $j$, RETURN ``$x$ pseudo-Anosov" and STOP.
\end{itemize}

\begin{proof}
We now prove Corollary~\ref{C:ConjugPbm}. Given two braid words
$x^{(1)}$ and $x^{(2)}$ of length~$\mathfrak l$ in the generators 
$\sigma_i^{\pm 1}$ ($i=1,2,3$), we want to test 
whether they are conjugate, and we want to do so in time $O(P(\mathfrak l))$, 
where $P$ is a polynomial. In order to achieve this, we first check whether 
they are periodic. If one of them is 
and the other one isn't, then they are not conjugate; if they both are,
we are able to solve the conjugacy search problem for them in polynomial time, see \cite{juan2}.

Next we want to check whether both braids admit a reducing curve 
surrounding three punctures. In order to do so, we calculate (using iterated
cyclic sliding) braids~$P_1$ and $P_2$ such that 
$P_i^{-1}x^{(i)} P_i\in SSS(x^{(i)})$ (see Theorem~\ref{bkl}). For each of these conjugated braids 
we can 
decide whether they admit a
round or almost round reducing curve surrounding~3 punctures (see Lemma \ref{L:SphereDiskTransl}). If one of them has and the other one
has not then they are not conjugate. If they both have then, possibly 
after adding one factor to $P_i$, we can even assume that
$P_i^{-1}x^{(i)} P_i$ (for $i=1,2$)
both have \emph{round} reducing curves. These two braids are
now conjugate if and only if two conditions are satisfied. Firstly the
winding numbers of the outer strand with the three inner strands must
coincide, and secondly the inner 3-braids must be conjugate. Both
of these conditions can be checked very quickly (see Proposition~\ref{P:reducibilityb3}). 

The most difficult case occurs when neither of the braids admits a
reducing curve surrounding three punctures, and we have to check for 
reducing curves surrounding two punctures. If a braid $x^{(i)}$ has
a canonical reducing curve surrounding two punctures, then a conjugate 
of $x^{(i)}$ in which the reducing curve is round
can be found as follows. We consider the twelfth power $y^{(i)}=x^{(i)\,12}$
which is guaranteed to be pure and which has the same canonical reduction
curves as $x^{(i)}$. One of the braids $\tilde{y}^{(i)}_j$ ($j=1,\ldots,4$)
has a canonical reduction curve surrounding three punctures. As in the
previous paragraph, we can find a braid $\tilde{P}_i$ so that 
$\tilde{P}_i^{-1}\tilde{y}^{(i)}_j\tilde{P}_i$ has a \emph{round} canonical
reduction curve surrounding three punctures. The corresponding homeomorphism
of the five times punctured sphere also has a reduction curve intersecting
the equator only twice. Thus as in the proof of Lemma~\ref{sphere} we can 
explicitly write a 4-strand braid $P_i$ such that the pure braid
$P_i^{-1} y^{(i)} P_i$ has a round reducing curve surrounding two punctures.
Thus the twelfth root $P_i^{-1} x^{(i)} P_i$ has the same round reducing
curves.

To summarize, if the search for round reducing curves surrounding two
punctures is unsuccessful in both braids $x^{(i)}$ ($i=1,2$), then 
both braids are pseudo-Anosov, and we cannot answer the question
whether they are conjugate. If for one of them the search is successful
and for the other it is not, then the two braids are not conjugate. 
If for both braids we find conjugates with round reducing curves
surrounding two punctures, then we consider for each of them a 3-braid 
$z^{(i)}$ obtained by merging the two inner strands of a reducing curve 
into a single strand. We then solve the reducibility problem for each of 
the $z^{(i)}$'s. This can be done 
quickly, according to Proposition \ref{P:reducibilityb3}. 
Our two braids $x^{(i)}$ are conjugate only if
the braids $z^{(i)}$ have the same Nielsen-Thurston type. 

The braids $z^{(i)}$ cannot be periodic because otherwise the braids $x^{(i)}$ would have a reducing curve
surrounding 3 punctures. 
If the $z^{(i)}$'s are pA, then the braids $x^{(i)}$ are conjugate if and only if 
the following two conditions are satisfied. Firstly, the winding
numbers of the two inner strands must coincide, and secondly, the two
three-strand braids $z^{(i)}$ must be conjugate. Again, both of these conditions can be
checked in polynomial time. 
Finally if both braids $z^{(i)}$ are reducible, one can find 4-braids $u^{(i)}$ conjugate to~$x^{(i)}$ so that 
$u^{(i)}$ has 2 round reducing curves, each of them surrounding two punctures. Moreover, 
the conjugating element can be explicitly written as a product of
simple 4-braids corresponding to the conjugating 3-braids occurring during iterated cycling applied to $z^{(i)}$, but with the strand corresponding to the round reducing curve of 
$P_i^{-1}x^{(i)}P_i$ duplicated.
Now,  the braids $x^{(i)}$ are conjugate if and only if 
the winding number of the two fat strands in
$u^{(1)}$ is the same as in~$u^{(2)}$, and 
if the two winding numbers of the pairs of strands inside the two tubes of $u^{(1)}$ coincide with those in~$u^{(2)}$. This can once again be checked quickly.
\end{proof}


\section{Proof of Proposition \ref{3punctures}}

\subsection{Outline of the proof, notation}

We shall give a proof by contradiction. So let us suppose that there exists a reducible 4-braid in its own SSS which admits an essential reduction curve of complexity 2 or greater than 2 surrounding 3 punctures. By multiplying the braid by a sufficiently high power of $\Delta^2$ one may suppose that it is positive. Moreover we may suppose this essential reduction curve to be of complexity exactly~2. If it was greater, then the ``convexity" of the SSS (see Corollary 4.2. of~\cite{elrifaimorton}) and the existence in the SSS of a braid with round essential reduction curve would yield another braid in the SSS with a complexity 2 essential reduction curve.

Hence we start with a positive reducible 4-braid $x$ in its own SSS with an essential reduction curve of complexity 2 surrounding 3 punctures. Let us denote this curve by $\mathcal C$.
We recall that $\mathcal C$ is fixed by $x$, because it is essential.
We call \emph{outer} the strand whose puncture base is not surrounded by the curve $\mathcal C$, the other three are \emph{inner}.
For the rest of this section we denote by~$\hat x$ the 3-braid $z^{(i)}$ obtained from~$x$ by removing the outer strand.

The plan of the proof is as follows: first we list all curves of complexity 2 surrounding~3 punctures in $D_4$. Next we prove that $x$ and $\hat x$ have the same canonical length, and that $\hat x$ is a rigid braid. 
Finally, using a careful case-by-case analysis, we show that Proposition \ref{gebhardt} and the rigidity of $\hat x$ together imply that none of the curves in our list can be an essential  reduction curve for $x$.

\subsection{Curves of complexity 2 surrounding 3 punctures in $D_4$}

Let us classify the simple closed curves of complexity 2 surrounding 3 punctures in~$D_4$. In order to describe them we may also consider isotopy classes of smooth arcs which start at one of the punctures and end on the boundary of the disk, considered up to sliding the endpoint along the boundary of $D_4$. We require that these isotopy classes have intersection number with every vertical line in the disk at most 1 and with the horizontal axis at least 1 (not counting the endpoints).
The bijective correspondence between the two notions is as follows: given such an arc $\gamma$ we consider a tubular neighborhood $N$ of $\gamma\cup\partial D_4$; the corresponding simple closed curve is then the boundary of $D_n-N$.

\begin{figure}[htb]
\centerline{\input{redcurves.pstex_t}}
\caption{}\label{curve3-2}
\end{figure}

Figure \ref{curve3-2} gives all the possibilities of such arcs. 
%
We shall first make two remarks concerning this figure which will be useful for the end of this section:
\begin{remark}
\label{remarquesurlescourbes}
(1) Curves of type 1 and 2 are symmetric to each other with respect to the horizontal axis. Hence acting by $\sigma_i$ on one of these curves and by $\sigma_i^{-1}$ on the other yields two curves with the same symmetry. The curves of type 3 and 4 have the same properties with respect to each other.

(2) One can obtain the curves 3 from the curves 2 (and 4 from 1) by applying the half twist $\Delta_4$. Moreover this symmetry is preserved by the respective actions of a braid $x$ and its conjugate $\tau(x)$.
\end{remark}

\subsection{The two braids $x$ and $\hat x$ have the same canonical length}
For a 4-braid~$y$ with an essential reduction curve surrounding 3 punctures let us denote by $v_y$ the number of crossings in $y$ where the outer strand is involved, counted with sign. This number is invariant under conjugacy. We remark that the winding number of the outer strand with each other strand is $\frac{v_y}{6}$.

\begin{lemma}
\label{lemma1}
Let $y$ be a positive reducible 4-braid with an essential reduction curve of complexity 2 surrounding 3 punctures. Then $v_y<3\sup(y)$.
\end{lemma}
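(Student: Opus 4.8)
The statement to prove is Lemma~\ref{lemma1}: if $y$ is a positive reducible $4$-braid with an essential reduction curve $\mathcal C$ of complexity~$2$ surrounding $3$ punctures, then $v_y < 3\sup(y)$. Here $v_y$ is the signed number of crossings in $y$ involving the outer strand, which for a positive braid is just the (unsigned) number of such crossings. My plan is to bound $v_y$ by comparing the way the outer strand wraps around the three inner ones against what the hypothesis ``complexity exactly~$2$'' permits.

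First I would set up the bookkeeping. Write $y = \Delta^p y_1 \cdots y_r$ in left normal form; since $y$ is positive we may as well think of it as a positive word, and $\sup(y) = p + r$. Each factor $\Delta$ contributes exactly $3$ crossings involving the outer strand (the outer strand crosses each of the other three once in a half-twist of $4$ strands), so the $\Delta^p$ part contributes $3p$ to $v_y$. Thus it suffices to show that the remaining part $y_1 \cdots y_r$ contributes \emph{strictly} less than $3r$ to $v_y$, i.e. that on average each permutation-braid factor $y_i$ involves the outer strand in fewer than $3$ crossings — and in fact I expect the right bound is at most $2$ per factor for a genuine permutation braid of $B_4$ (a proper prefix of $\Delta_4$ involves any fixed strand in at most $2$ of its crossings, since $3$ crossings on one strand already force that strand to be ``all the way across'', which happens only in $\Delta$ itself). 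Actually $3$ crossings of the outer strand in a single simple factor is possible only if that factor equals $\Delta$; so for $y_1\cdots y_r$ with each $y_i$ a \emph{proper} prefix of $\Delta$, each contributes at most $2$, giving $v_y \le 3p + 2r$. Combined with $r \ge 1$ (complexity $2 > 0$ rules out $r = 0$, where $\mathcal C$ would be round, contradiction), and noting that if $r\ge 1$ then $3p + 2r < 3(p+r) = 3\sup(y)$ exactly when $r \ge 1$, we are done — \emph{provided} the per-factor bound of $2$ is correct.

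So the real work, and the likely obstacle, is justifying the per-factor bound: that no proper prefix of $\Delta_4$ uses a single fixed strand in $3$ crossings. This is a finite check on the (at most $4! - 2 = 22$, or rather the relevant) simple braids in $B_4$, but one has to be careful about what ``uses the outer strand in $k$ crossings'' means combinatorially for a permutation braid — it is the number of $j$ such that the outer strand's path crosses strand $j$, which for a simple braid equals the number of inversions involving the outer strand's start/end positions. A cleaner way: a simple $4$-braid corresponds to a subset of the positive roots $\{(a,b): a<b\}$ that is closed under a certain rule; the outer strand occupies one position, and the number of roots of the form $(\text{outer}, \cdot)$ or $(\cdot, \text{outer})$ in this set is at most $3$, with equality forcing the set to be all of $\Delta$. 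I would phrase this via the descent/inversion description of permutation braids and observe that a strand participating in $3$ of the $\binom{4}{2}=6$ possible crossings, together with left-weightedness / simplicity, forces the full half-twist. Alternatively — and this is perhaps the route the authors take — one invokes the complexity hypothesis directly: since $\mathcal C$ has complexity~$2$, Proposition~\ref{gebhardt} controls how the images $\mathcal C * \Delta^p y_1 \cdots y_i$ evolve, and one can read off that the outer strand cannot wind ``too fast'' without pushing some intermediate curve to complexity $\ge 3$. I expect the former (finite combinatorial) argument is what is intended, since it is self-contained and short.

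**Summary of steps.** (1)~Reduce to showing $y_1\cdots y_r$ (the non-$\Delta$ part of the left normal form) contributes $\le 2r$ to $v_y$, using that $\Delta$ contributes exactly $3$ and that $r \ge 1$ because complexity~$2$ forbids $\mathcal C$ from being round. (2)~Prove the key per-factor claim: a proper prefix of $\Delta_4$ (equivalently, a non-identity simple $4$-braid that is not $\Delta$) involves any fixed one of the four strands in at most $2$ of its crossings — a short finite verification via the inversion-set description of simple braids. (3)~Conclude $v_y \le 3p + 2r < 3(p+r) = 3\sup(y)$. The only subtlety is making step~(2) airtight; everything else is assembly.
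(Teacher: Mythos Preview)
Your step~(2) is false, and this is a genuine gap. You claim that a non-$\Delta$ simple element of $B_4$ involves any fixed strand in at most $2$ crossings. But take $\sigma_1\sigma_2\sigma_3$: this is a proper prefix of $\Delta_4$, and the strand starting at position~$1$ crosses each of the other three strands exactly once, so it participates in $3$ crossings. More generally, in the simple braid corresponding to a permutation~$\pi\in S_4$, the strand starting at position~$i$ participates in as many crossings as there are $j$ with $\{i,j\}$ an inversion of~$\pi$; for $i=1$ this count equals $3$ whenever $\pi(1)=4$, which happens for six permutations, only one of which is the longest element. The paper's own proof even exhibits two further counterexamples: when the outer strand starts at position~$2$ or~$3$, the length-$5$ simple braids $\sigma_1\sigma_2\sigma_1\sigma_3\sigma_2$ and $\sigma_2\sigma_3\sigma_2\sigma_1\sigma_2$ each have the outer strand crossing all three inner strands.

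Because the per-factor bound of~$2$ fails, your inequality $v_y\le 3p+2r$ does not hold, and the argument collapses. The paper proceeds differently: it first observes the trivial bound $v_y\le 3\sup(y)$ (each simple factor contributes at most~$3$), and then rules out equality. If $v_y=3\sup(y)$, every factor contributes exactly~$3$, meaning the outer strand crosses all three inner strands in each factor. This forces the outer strand to shuttle between the two extreme positions (or, if it starts in position~$2$ or~$3$, forces the factors to be the two length-$5$ simples above, alternately). In every case one checks that $y$ then preserves a specific \emph{round} curve, and this round curve has nonempty intersection with every complexity-$2$ curve in Figure~\ref{curve3-2}. That contradicts the essentiality of~$\mathcal C$ (an essential reduction curve cannot be crossed by another reduction curve). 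So the hypothesis that $\mathcal C$ has complexity exactly~$2$ and is \emph{essential} is used in an indispensable way; a purely combinatorial bound on simple factors, without reference to the curve, cannot work.
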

\begin{proof}
Because $y$ is a positive braid, $y$ is a product of $\sup(y)$ simple braids. Because each simple braid contributes at most 3 to $v_y$, we have $v_y\leqslant 3\sup(y)$.
Suppose that the equality holds, then each simple braid contributes exactly 3 to $v_y$,
so that the outer strand crosses all the inner strands in each Garside factor of $y$.
If the outer strand is the first or the fourth then $\sup(y)$ is even because of the purity of this outer strand, and the circle surrounding the punctures 2, 3, 4, or 1, 2, 3, respectively, is preserved by $y$; this is a contradiction since both of these circles cross each of the curves in Figure \ref{curve3-2}.\\
If the outer strand is the second or the third then the Garside factors of $y$ other than~$\Delta$ must be $\sigma_1\sigma_2\sigma_1\sigma_3\sigma_2$ and $\sigma_2\sigma_3\sigma_2\sigma_1\sigma_2$ alternately, so that by the purity of the outer strand, $\sup(y)$ is also even and the circle surrounding the punctures numbered 1 and 2 and the circle surrounding the punctures numbered 3 and~4 are preserved by $y$. We conclude as above.
\end{proof}

 We now see that removing the outer strand of $x$ does not affect the canonical length:
 \begin{proposition}
 \label{infsup}
The equalities $\inf(\hat x)=\inf(x)$ and ${\sup(\hat x)=\sup(x)}$ hold.
\end{proposition}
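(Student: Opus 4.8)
The strategy is to bound $\sup(\hat x)$ and $\inf(\hat x)$ from both sides: one direction is immediate since $\hat x$ is obtained from $x$ by deleting a strand, which can only simplify the braid; the other direction is where the work lies. First I would record the easy inequalities. Since $x$ is positive and $\hat x$ is a positive sub-braid of $x$ (erasing the outer strand sends a positive permutation braid to a positive permutation braid, so a product of $r$ Garside factors maps to a positive word that is a product of at most $r$ simple $3$-braids), we get $\sup(\hat x)\leqslant\sup(x)$ by Proposition~\ref{formenormale3}; and because strand-deletion is a group homomorphism $B_4^{(\mathcal C)}\to B_3$ that respects positivity, $\inf(\hat x)\geqslant\inf(x)$ comes for free (a positive braid has non-negative infimum, and more precisely each $\Delta_4$ appearing as a Garside factor of $x$ maps onto $\Delta_3$). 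So it remains to prove $\sup(\hat x)\geqslant\sup(x)$ and $\inf(\hat x)\leqslant\inf(x)$.

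\textbf{The main step.} For $\sup(\hat x)\geqslant\sup(x)$: write $\mathfrak{l}(\hat x)$ for the number of crossings of $\hat x$, i.e.\ the exponent sum of the positive word $\hat x$. On one hand, the total number of crossings of $x$ equals $\mathfrak{l}(x)=\mathfrak{l}(\hat x)+v_x$, since each crossing of $x$ either involves only inner strands (and survives in $\hat x$) or involves the outer strand (and is counted by $v_x$, here with all signs positive). On the other hand, a positive $4$-braid which is a product of $\sup(x)$ simple braids has at least $\sup(x)$ crossings, but in fact much more can be said when $x\in SSS(x)$: I would use the known lower bound $\mathfrak{l}(x)\geqslant 3\sup(x)$ for positive $4$-braids in their super summit set whose infimum part has been stripped --- more carefully, a rigid-like positivity argument shows each Garside factor of $x$ that is not $\Delta_4$ still contributes several inner crossings. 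Combining with Lemma~\ref{lemma1}, which gives the strict bound $v_x<3\sup(x)$, we obtain $\mathfrak{l}(\hat x)=\mathfrak{l}(x)-v_x>\mathfrak{l}(x)-3\sup(x)$; and since $x$ is a product of $\sup(x)$ simple $4$-braids each of length at most $6$, we have $\mathfrak{l}(x)\leqslant 6\sup(x)$, hence $\mathfrak{l}(\hat x)>3\sup(x)-\text{(something)}$ --- this is not quite enough by itself, so the real argument must instead count, factor by factor, how many inner crossings each Garside factor of $x$ retains, and show the total is at least $3\bigl(\sup(x)-\inf(x)\bigr)$, i.e.\ that $\hat x$ still needs at least $\sup(x)-\inf(x)$ non-$\Delta$ simple $3$-factors. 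Equivalently: if the left normal form of $\hat x$ had length $r'<\sup(x)-\inf(x)=\ell(x)$, then since $\hat x$ is reducible (it is the restriction of the reducible braid $x$ to the inner disk, and $\mathcal C$ is preserved, so $\hat x\in B_3$ is non-periodic reducible or its conjugacy behaviour is controlled), one could re-inflate the outer strand and a standardizer argument à la Proposition~\ref{gebhardt} would produce a conjugate of $x$ of canonical length $<\ell(x)$, contradicting $x\in SSS(x)$.

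\textbf{Closing the infimum side.} For $\inf(\hat x)\leqslant\inf(x)$: suppose $\inf(\hat x)>\inf(x)=p$. Then $\hat x = \Delta_3^{p+1}(\cdots)$, so $\Delta_3^{-(p+1)}\hat x$ is still positive; re-inflating the outer strand through the standardizer of $\mathcal C$ and using again that the full twist $\Delta_4^2$ on $D_4$ restricts to $\Delta_3^2$ on the inner disk, one deduces that $x$ is conjugate (inside its SSS, using Proposition~\ref{gebhardt} with $s=2$) to a braid whose first $\Delta_3$-worth of inner crossings can be ``cancelled'', again lowering $\ell$ or raising $\inf$ beyond what membership in $SSS(x)$ allows. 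So $\inf(\hat x)=\inf(x)$, and combined with the supremum equality this finishes the proof.

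\textbf{Anticipated obstacle.} The genuinely delicate point is the inequality $\sup(\hat x)\geqslant\sup(x)$: deleting a strand can \emph{a priori} cause massive cancellation in the $3$-braid, dropping its canonical length, and the only thing preventing this is precisely the hypothesis $x\in SSS(x)$ together with the complexity-$2$ constraint on $\mathcal C$. Making this precise requires either a careful factor-by-factor bookkeeping of which inner crossings are ``protected'' in each Garside factor (using Lemma~\ref{lemma1} to guarantee that no factor is entirely consumed by outer-strand crossings), or a clean conjugacy argument showing that a drop in $\ell(\hat x)$ transports back to a drop in $\ell(x)$ via the minimal standardizer of $\mathcal C$ and Proposition~\ref{gebhardt}; I expect the latter, packaged through the $P_i$-conjugation machinery already used in the proof of Proposition~\ref{gebhardt}, to be the cleanest route.
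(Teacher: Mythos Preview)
Your easy direction ($\sup(\hat x)\leqslant\sup(x)$, $\inf(\hat x)\geqslant\inf(x)$) is fine. The rest, however, is not a proof: you try several approaches, abandon the first one yourself, and the remaining ones contain a genuine error and a missing key step.

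\textbf{An error.} You write that ``$\hat x$ is reducible (it is the restriction of the reducible braid $x$ to the inner disk\ldots)''. This is false in general: $\hat x$ is the $3$-braid obtained by deleting the outer strand, and there is no reason for it to be reducible. Indeed the paper later gives examples where the interior braid is pseudo-Anosov. So any argument relying on the reducibility of $\hat x$ cannot work.

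\textbf{The missing key idea.} Your ``re-inflation'' heuristic is pointing in the right direction, but you never isolate the mechanism that makes it precise. The paper's argument is this. Let $P$ be the minimal standardizer of $\mathcal C$, so $P^{-1}xP$ has a \emph{round} reduction curve and therefore, by Lee--Lee, decomposes as a tubular braid $\langle x_0\rangle x_1$ with $x_0\in B_2$ and $x_1\in B_3$. One then constructs an explicit conjugate $X$ of $x$ (still with round reduction curve) of the form $X=\langle x_0\rangle\hat x$, i.e.\ with interior braid equal to $\hat x$ itself, not merely a conjugate of it. The crucial input is the Lee--Lee formula
\[
\sup(X)=\max\bigl(\sup(x_0),\sup(\hat x)\bigr),\qquad \inf(X)=\min\bigl(\inf(x_0),\inf(\hat x)\bigr).
\]
Now Lemma~\ref{lemma1} is used, not as a loose crossing bound, but to rule out the possibility $\sup(X)=\sup(x_0)$: if that held, roundness of the curve forces $v_X=3\sup(X)$, while the conjugacy-invariance of $v$ gives $v_X=v_x<3\sup(x)\leqslant 3\sup(X)$, a contradiction. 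Hence $\sup(X)=\sup(\hat x)$, and the chain $\sup(\hat x)\leqslant\sup(x)\leqslant\sup(X)=\sup(\hat x)$ closes. The $\inf$ side is handled by a separate argument (ruling out $\inf(X)=\inf(x_0)$ via a split-braid observation and the fact that outermost curves of $CRS(x)$ would then be round, which is impossible for complexity-$2$ curves). None of your sketches reaches the tubular decomposition $X=\langle x_0\rangle\hat x$ or the $\max/\min$ formulae, and without them the argument does not close.
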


\begin{proof}
First notice that $\sup(\hat x)\leqslant \sup(x)$ and $\inf(\hat x)\geqslant \inf(x)$ because $\hat x$ was obtained from $x$ by removing a strand.
Let $P$ be the minimal standardizer of $\mathcal C$. By removing the outer strand in~${P^{-1}x P}$ we obtain a 3-braid $x'$, conjugate to $\hat x$: if $P'\in B_3$ is obtained from $P$ by removing the outer strand, then $x'=P'^{-1}\hat x P'$. Since $P^{-1}x P$ preserves a round curve surrounding 3 punctures, its left normal form can be written (see \cite{leelee}) as 
$$P^{-1}x P=\langle x_0\rangle x_1,$$ 
with $x_0\in B_2$ and $x_1\in B_3$. (This notation means that $x_1$ is the braid obtained from $P^{-1}xP$ by removing the outer strand and $x_0$ is the 2-braid obtained when considering $x_1$ as a fat strand.) Note that $x_1=x'$. Hence if $P''$ is the 4-braid obtained from $P'$ by adding a trivial strand in the suitable position and if we define~$X=P''P^{-1}x PP''^{-1}$, then $X=\langle x_0\rangle \hat x$; that is $X$ is a conjugate of $x$ in which we have a tubular braid equal to $\hat x$ and an outer strand making $\frac{v_x}{6}$ twists around this ``fat strand" (we also have 
$x_0=\sigma_1^{v_x/3}$).

Now according to \cite{leelee} one has $$\sup(X)=\max(\sup(x_0),\sup(\hat x))$$ and because~$x\in SSS(x)$ one has $\sup(X)\geqslant \sup(x)$.

Next we prove that $\sup(x_0)<\sup(X)$ (and in particular $\sup(X)=\sup(\hat x)$). For if we had $\sup(X)=\sup(x_0)$, then $v_X=3\sup(X)$ because of the roundness of the curve preserved by $X$; and by conjugacy $v_x=v_X$.  Now, according to Lemma \ref{lemma1} we also have ${v_x<3\sup(x)}$. Hence $$v_x<3\sup(x)\leqslant 3\sup(X)=v_X=v_x,$$ which is a contradiction. The second part of the proposition follows as $$\sup(\hat x)\leqslant\sup(x)\leqslant\sup(X)=\sup(\hat x).$$
In a similar way we now have $$\inf(X)=\min(\inf(x_0),\inf(\hat x)).$$ Moreover $\inf(X)<\inf(x_0)$,  for if we had 
an equality, $\Delta^{-\inf(X)}X$ would be a split braid (see Section 6 in \cite{leelee}).
Noticing that (by purity of the outer strand) $\inf(X)$ is even, it would follow that $\Delta^{-\inf(X)}x$ is also split and positive (because $x\in SSS(x)$, one has $\inf(X)\leqslant \inf(x)$) so that by Proposition 6.2 in \cite{leelee} the outermost curves in $CRS(\Delta^{-\inf(X)}x)=CRS(x)$ are round.
This is impossible since the only circles which do not cross the curves in Figure \ref{curve3-2} are inner to them. Hence $$\inf(X)=\inf(\hat x)\geqslant\inf(x)\geqslant\inf(X),$$ where the last inequality holds since $x\in SSS(x)$. We finally obtain $$\inf(X)=\inf(\hat x)=\inf(x).$$ Hence the proposition is shown and we remark also that $X\in SSS(x)$.
\end{proof}

\begin{corollary}
With the above notations we have $\hat x\in SSS(\hat x)$.
\end{corollary}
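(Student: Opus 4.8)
The plan is to argue by contradiction, recycling the tubular $4$-braid $X=\langle x_0\rangle\hat x$ constructed in the proof of Proposition~\ref{infsup}. From that proof I keep the following facts: $X$ is conjugate to $x$; $X\in SSS(x)$; $X$ preserves a round curve $\mathcal C$ enclosing its three inner punctures, with $\hat x$ the induced $3$-braid inside $\mathcal C$ and $x_0=\sigma_1^{v_x/3}\in B_2$ the outer $2$-braid; and the strict chain $\inf(x)=\inf(X)<\inf(x_0)\leqslant\sup(x_0)<\sup(X)=\sup(x)$, so that $x_0$ lies strictly inside the $\inf/\sup$-window of $X$. Now suppose $\hat x\notin SSS(\hat x)$ and pick $\hat y\in SSS(\hat x)$, say $\hat y=g^{-1}\hat xg$ with $g\in B_3$. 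Then $\ell(\hat y)<\ell(\hat x)$; since moreover $\inf(\hat y)\geqslant\inf(\hat x)$ and $\sup(\hat y)\leqslant\sup(\hat x)$ (because $\hat y$ has maximal $\inf$ and minimal $\sup$ in the conjugacy class of $\hat x$, which contains $\hat x$), at least one of the strict inequalities $\inf(\hat y)>\inf(\hat x)$ or $\sup(\hat y)<\sup(\hat x)$ must hold.

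The next step is to push this improvement up to $X$. I would realize $g$ by a homeomorphism of $D_4$ supported in the $3$-punctured sub-disk bounded by $\mathcal C$, and let $G\in B_4$ be its mapping class; then $G$ fixes $\mathcal C$ and is the identity on the complement of its interior. Consequently $Y:=G^{-1}XG$ is a conjugate of $x$ which again fixes $\mathcal C$, agrees with $X$ outside $\mathcal C$, and restricts to $g^{-1}\hat xg=\hat y$ inside $\mathcal C$; in other words $Y=\langle x_0\rangle\hat y$, with the \emph{same} outer $2$-braid $x_0$ as $X$.

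Finally I would invoke, exactly as in the proof of Proposition~\ref{infsup}, the Lee--Lee identities $\sup(\langle x_0\rangle\hat y)=\max(\sup(x_0),\sup(\hat y))$ and $\inf(\langle x_0\rangle\hat y)=\min(\inf(x_0),\inf(\hat y))$. If $\sup(\hat y)<\sup(\hat x)=\sup(X)$, then both arguments of the maximum are $<\sup(X)$, whence $\sup(Y)<\sup(X)$; symmetrically, if $\inf(\hat y)>\inf(\hat x)=\inf(X)$ then $\inf(Y)>\inf(X)$. In either case $Y$ is a conjugate of $x$ with strictly smaller $\sup$, or strictly larger $\inf$, than $X$, contradicting $X\in SSS(x)$ (equivalently $x\in SSS(x)$). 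Hence $\hat x\in SSS(\hat x)$. I expect the only delicate point to be the identification $Y=\langle x_0\rangle\hat y$ — i.e.\ checking that conjugating $X$ by a homeomorphism supported inside $\mathcal C$ leaves the outer $2$-braid $x_0$ untouched while modifying the inner $3$-braid exactly by $g$ — but this is precisely the kind of localized tubular-braid bookkeeping already carried out in the proof of Proposition~\ref{infsup}.
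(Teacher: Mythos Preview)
Your proof is correct and follows essentially the same route as the paper's: both argue by contradiction, lift a putative shorter conjugate of $\hat x$ to a tubular $4$-braid of the form $\langle x_0\rangle(\text{shorter }3\text{-braid})$ via a conjugator supported inside the round curve, and contradict $X\in SSS(x)$. The only cosmetic difference is that you invoke the strict inequalities $\inf(X)<\inf(x_0)$ and $\sup(x_0)<\sup(X)$ already established in Proposition~\ref{infsup} and plug them directly into the Lee--Lee $\max/\min$ formulas, whereas the paper re-runs ``the same argument using Lemma~\ref{lemma1}'' on the new braid to obtain $\sup(z)=\sup(\hat z)$, $\inf(z)=\inf(\hat z)$; your packaging is slightly more direct but the content is identical.
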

\begin{proof}
Let us suppose by contradiction that there exists a 3-braid $\hat z$ in the conjugacy class of $\hat x$ with $\inf(\hat z)>\inf(\hat x)$ or $\sup(\hat z)<\sup(\hat x)$. Let us also denote by~$\hat y$ the conjugating element, that is $\hat z=\hat y^{-1}\hat x\hat y$. Let $z$ be the 4-braid obtained by conjugating $X$ by $\hat y$ augmented with a trivial strand in the suitable position, such that~$z$ has the same round essential reduction curve as $X$ and $z=\langle x_0\rangle\hat z$.

By the same argument using Lemma \ref{lemma1} as in the proof of Proposition \ref{infsup} (with~$z$ playing the role of $X$), we have $\sup(z)=\sup(\hat z)$ and $\inf(z)=\inf(\hat z)$. 

Now suppose that $\sup(\hat z)<\sup(\hat x)$. Then $$\sup(z)=\sup(\hat z)<\sup(\hat x)=\sup(x);$$ which is a contradiction, because the braids $z$ and $x$ are conjugate and $x$ lies in its own $SSS$.

Similarly suppose that $\inf(\hat z)>\inf(\hat x)$. Then $\inf(z)=\inf(\hat z)>\inf(\hat x)=\inf(x)$. This is again a contradiction.
\end{proof}
\begin{corollary}
The braid $\hat x$ is rigid.
\end{corollary}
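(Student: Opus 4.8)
The plan is to read off the statement from Proposition~\ref{rigid}, using the two corollaries just established. By the previous corollary we have $\hat x\in SSS(\hat x)$, so $\ell_s(\hat x)=\ell(\hat x)$, and by Proposition~\ref{infsup} this common value equals $\ell(x)$. Hence, as soon as $\ell(x)\geqslant 2$, Proposition~\ref{rigid} applies to the $3$-braid $\hat x$ and tells us that \emph{every} element of $SSS(\hat x)$ is rigid; in particular $\hat x$ itself is rigid. So the whole content of the corollary is contained in Propositions~\ref{infsup} and~\ref{rigid} together with the preceding corollary, provided we know that $\ell(x)\geqslant 2$.

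Thus the only point requiring care — and the main (if modest) obstacle — is to rule out the degenerate cases $\ell(x)\leqslant 1$. If $\ell(x)=0$ then, since $x$ is positive, $x$ is a power of $\Delta$, hence periodic, contradicting the standing assumption that $x$ is reducible (non-periodic). If $\ell(x)=1$, write $x=\Delta^p x_1$ with $x_1$ a non-trivial proper permutation braid; since $x$ and $\Delta^{2}x$ have the same reduction curves with the same complexities, we may assume $p\in\{0,1\}$, and then $x$ ranges over an explicit finite list of $4$-braids. One checks directly that none of these can preserve a three-puncture curve of complexity~$2$: if $\mathcal C$ surrounds three punctures, has complexity $2$ and satisfies $\mathcal C*x=\mathcal C$, then conjugating $x$ by the length-$2$ minimal standardizer of $\mathcal C$ yields (by the inequalities of \cite{leelee}) a positive $4$-braid of canonical length $1$ preserving a \emph{round} three-puncture curve, and inspecting the finitely many such braids shows that $\mathcal C$ must in fact have been round or almost round — contradicting the standing assumption that its complexity is exactly $2$.

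Consequently $\ell(x)\geqslant 2$, and the argument of the first paragraph completes the proof. Everything here is routine bookkeeping except for the finite verification of the last paragraph, namely that a complexity-$2$ curve surrounding three punctures cannot be preserved by a $4$-braid of canonical length~$\leqslant 1$; that verification is the only step I expect to cost any work, and even it reduces to checking a short explicit list.
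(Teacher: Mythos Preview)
Your approach is exactly the paper's: invoke Proposition~\ref{rigid} together with the previous corollary ($\hat x\in SSS(\hat x)$) and Proposition~\ref{infsup}. The paper's entire proof reads ``See Proposition~\ref{rigid}'', so you are actually being more careful than the paper by flagging the hypothesis $\ell_s(\hat x)>1$.

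That said, your treatment of the case $\ell(x)=1$ is unnecessarily laborious, and the sketch you give (``inspecting the finitely many such braids shows that $\mathcal C$ must in fact have been round or almost round'') is not really an argument as stated: knowing that $P^{-1}xP$ has canonical length~$1$ and preserves a round curve does not by itself bound the complexity of $\mathcal C=(\text{round curve})*P^{-1}$ below~$2$. The finite check you allude to would presumably succeed, but you do not need it. A much cleaner route is already contained in the proof of Proposition~\ref{infsup}: there it is shown that $\sup(x_0)<\sup(X)$ and $\inf(X)<\inf(x_0)$ \emph{strictly}. Since $x_0\in B_2$ one has $\inf(x_0)=\sup(x_0)$, so
\[
\inf(X)\ <\ \inf(x_0)\ =\ \sup(x_0)\ <\ \sup(X),
\]
whence $\ell(X)\geqslant 2$. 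As $X\in SSS(x)$ and $x\in SSS(x)$, this gives $\ell(x)\geqslant 2$ and hence $\ell_s(\hat x)=\ell(\hat x)=\ell(x)\geqslant 2$. Proposition~\ref{rigid} then applies without any case analysis.
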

\begin{proof}

See Proposition \ref{rigid}.
\end{proof}

\subsection{Analyzing the left normal form of $x$} 
 Our strategy for proving Proposition~\ref{3punctures} is to obtain a contradiction by proving the following statement:
\begin{lemma}\label{technicalemma}
None of the curves in Figure \ref{curve3-2} can be an essential reduction curve for $x$.
\end{lemma}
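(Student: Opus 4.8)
The plan is to argue by contradiction. Suppose that some curve $\mathcal C$ in Figure \ref{curve3-2} is an essential reduction curve for $x$, so that $\mathcal C*x=\mathcal C$, and write the left normal form $x=\Delta^px_1\cdots x_r$ (with $p=\inf(x)\geqslant 0$ since $x$ is positive, and $r=\ell(x)$). By Proposition \ref{gebhardt} every partial image
$$\mathcal C_i:=\mathcal C*\Delta^px_1\cdots x_i\qquad(0\leqslant i\leqslant r)$$
has complexity at most $2$; in particular $\mathcal C_0=\mathcal C_r=\mathcal C$, and each $\mathcal C_i$ belongs to the finite set $\mathcal F$ of simple closed curves surrounding $3$ punctures of complexity $\leqslant 2$ — that is, the two round curves, the four almost-round curves, and the complexity-$2$ curves of Figure \ref{curve3-2}. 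Before the computation I would use the symmetries of Remark \ref{remarquesurlescourbes} to cut down the number of cases: conjugating $x$ by $\Delta$ replaces it by the positive braid $\tau(x)$, which again lies in its own $SSS$, whose shadow $\widehat{\tau(x)}$ is again rigid (every hypothesis of the previous subsections is $\tau$-invariant), and which carries a curve of type $1$ (resp.\ $2$) to one of type $4$ (resp.\ $3$); so it is enough to rule out the curves of types $1$ and $2$. (If one also invokes the horizontal-reflection symmetry of Remark \ref{remarquesurlescourbes}(1) together with the fact that passing to the inverse braid preserves membership in the $SSS$ and rigidity of the shadow, a single representative curve suffices.)

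The core of the argument is then a finite bookkeeping of the action of simple $4$-braids on $\mathcal F$. I would first compute, for the complexity-$2$ curve(s) under consideration and each simple $4$-braid $s$, whether $\mathcal C*s$ still lies in $\mathcal F$ and, if so, which curve of $\mathcal F$ it is; by Proposition \ref{gebhardt} only such $s$ can occur as a Garside factor along our sequence, so this yields a small directed graph on $\mathcal F$ with edges labelled by simple braids. The structure that makes the graph tractable comes from the $3$-braid shadow $\hat x$: by Proposition \ref{infsup} one has $\sup(\hat x)=\sup(x)$ and $\inf(\hat x)=\inf(x)$, hence each factor $x_i$ restricts to a non-trivial simple $3$-braid $\hat x_i\in\{\sigma_1,\sigma_2,\sigma_1\sigma_2,\sigma_2\sigma_1\}$ acting on the three inner punctures, and $x_i$ is determined by $\hat x_i$ together with the way the outer strand is threaded through it — which is in turn constrained by Lemma \ref{lemma1}, since $v_x<3\sup(x)$ forces the outer strand not to cross all three inner strands in at least one factor. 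Tracking the closed sequence $\mathcal C_0,\mathcal C_1,\dots,\mathcal C_r=\mathcal C_0$ is then the same as following a closed path in this labelled graph; because $\hat x$ is rigid, the pairs $\hat x_i\hat x_{i+1}$ — and, at the seam, $\hat x_r\,\tau^{-p}(\hat x_1)$ — are all left-weighted, which forbids the ``turn-back'' labels at each step and, I expect, kills every candidate closed path passing through a complexity-$2$ curve, contradicting $\mathcal C_0=\mathcal C_r=\mathcal C$.

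The main obstacle is the size and reliability of this case analysis, not any single conceptual point: one must correctly list the complexity-$2$ curves of Figure \ref{curve3-2}, correctly compute the action on each of them of every simple $4$-braid that can appear — in particular of the half-twist $\Delta$ and of the two ``full'' factors $\sigma_1\sigma_2\sigma_1\sigma_3\sigma_2$ and $\sigma_2\sigma_3\sigma_2\sigma_1\sigma_2$ singled out in the proof of Lemma \ref{lemma1} — and then check the genuinely delicate point that no sequence of admissible factors is simultaneously compatible with ``the complexity never exceeds $2$ and returns to $2$'' and with ``$\hat x$ is a rigid element of its own $SSS$''. A secondary subtlety is that Proposition \ref{gebhardt} controls only the partial products read forward from $\mathcal C$; if the rigidity condition at the single seam $\hat x_r\tau^{-p}(\hat x_1)$ turns out not to be enough, one should also exploit that every cyclic rotation of $\hat x$ (hence of $x$) is again a rigid $SSS$-braid of the same type, applying the complexity bound of Proposition \ref{gebhardt} along each of these as well.
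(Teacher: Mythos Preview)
Your overall plan matches the paper's approach closely: the paper also argues by contradiction, invokes Proposition~\ref{gebhardt} to keep every partial image $\mathcal C_i$ in the finite set of curves of complexity~$\leqslant 2$, organizes the case analysis according to the inner $3$-braid factor $\hat x_i\in\{\sigma_1,\sigma_2,\sigma_1\sigma_2,\sigma_2\sigma_1\}$ (using Proposition~\ref{infsup} exactly as you do), exploits the $\tau$- and horizontal-reflection symmetries of Remark~\ref{remarquesurlescourbes} to halve the work, and uses the rigidity of $\hat x$ at the seam to pin down the last letter once the first is known. So the skeleton is right, and your ``directed graph on $\mathcal F$'' is precisely what Figures~\ref{diagram} and~\ref{diagram2} encode.

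There is, however, a genuine gap in your expectation that rigidity of $\hat x$ together with the complexity bound ``kills every candidate closed path passing through a complexity-$2$ curve''. It does not. For the families $\mathcal S$ and $\mathcal R$ there \emph{are} closed paths in your graph compatible with all the Garside constraints. For instance $\sigma_3$ (inner braid $\sigma_2$) fixes $\mathcal R_1$, so $x=\Delta^{p}\sigma_3^{r}$ is a perfectly good positive braid in its own $SSS$ with rigid shadow $\hat x=\sigma_2^{r}$ and $\mathcal R_1*x=\mathcal R_1$; likewise $\sigma_2$ fixes $\mathcal S_1$, and the pair $\sigma_2\sigma_3\sigma_1\sigma_2\sigma_3$, $\sigma_2\sigma_1\sigma_3\sigma_2\sigma_1$ cycles between $\mathcal R_2$ and $\mathcal R_3$ with rigid inner braid. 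None of the tools you list (Proposition~\ref{gebhardt}, rigidity, Lemma~\ref{lemma1}, cyclic rotation of $x$) excludes these loops.

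The missing ingredient is the word \emph{essential}. In each surviving loop the braid $x$ visibly preserves a \emph{round} curve --- e.g.\ the circle around punctures $2,3,4$ when $x=\Delta^{p}\sigma_3^{r}$, or the two circles around $\{1,2\}$ and $\{3,4\}$ in the $\mathcal R_2\leftrightarrow\mathcal R_3$ loop --- and this round curve has nonzero geometric intersection with the putative complexity-$2$ curve $\mathcal C$. Since an essential reduction curve cannot cross another reduction curve of $x$, this contradicts the essentiality of $\mathcal C$. The paper invokes exactly this extra argument (in Lemmas~\ref{s} and~\ref{r}) to dispose of the loops that your graph analysis alone cannot rule out; you should build it into your plan rather than hoping no loops survive.
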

We do this by analyzing precisely the factors of the left normal form of $x$. These are composed of an inner 3-braid and possibly a move of the outer strand.  Because of the equalities $\sup(x)=\sup(\hat x)$ and $\inf(x)=\inf(\hat x)$ (see Proposition~\ref{infsup}), and according to Proposition \ref{formenormale3}, the factors of the left normal form of $\hat x$ are exactly the inner components of the factors of the left normal form of $x$. 
Recall that the only nontrivial non-$\Delta$ simple elements in $B_3$ are 
$\sigma_1$, $\sigma_2$, $\sigma_1\sigma_2$ and~$\sigma_2\sigma_1$.

\begin{table}[htb]
$$\begin{tabular}{|c|c|c|}
\hline 
\backslashbox{Inner factor}{Number of the outer strand}
  & 1 & 2\\
  \hline
                                           $\sigma_1$ &

  \begin{tabular}{c}\label{Tablefactors}  
      $\sigma_2$\\
      $\sigma_2\sigma_1$\\
      $\sigma_2\sigma_1\sigma_2$\\
      $\sigma_2\sigma_1\sigma_2\sigma_3$
      \end{tabular} &

 \begin{tabular}{c}
    $\sigma_1\sigma_2$\\
     $\sigma_1\sigma_2\sigma_1$\\
     $\sigma_2\sigma_1$\\
     $\sigma_2\sigma_1\sigma_3$
     \end{tabular}\\
     
     \hline
     
     $\sigma_1\sigma_2$ &

 \begin{tabular}{c}
      $\sigma_2\sigma_3$\\
      $\sigma_2\sigma_1\sigma_3$\\
      $\sigma_2\sigma_1\sigma_3\sigma_2$\\
      $\sigma_2\sigma_1\sigma_3\sigma_2\sigma_3$
      \end{tabular}
      &

    \begin{tabular}{c}
       
          $\sigma_1\sigma_2\sigma_3$\\
     $\sigma_1\sigma_2\sigma_1\sigma_3$\\
     $\sigma_2\sigma_1\sigma_3\sigma_2$\\
     $\sigma_2\sigma_1\sigma_3\sigma_2\sigma_3$
     \end{tabular}\\
\hline
$ \sigma_2 $    &
   \begin{tabular}{c}    
          $\sigma_3$\\
       $\sigma_3\sigma_1$\\
       $\sigma_1\sigma_2\sigma_3\sigma_2$\\
       $\sigma_3\sigma_1\sigma_2$
       
       \end{tabular}
       & 
       \begin{tabular}{c} 
       $\sigma_3$\\
       $\sigma_3\sigma_1$\\
       $\sigma_3\sigma_2$\\
       $\sigma_3\sigma_2\sigma_3$

\end{tabular}\\
\hline
$\sigma_2\sigma_1$ & 

\begin{tabular}{c}
       $\sigma_3\sigma_2\sigma_1$\\
       $\sigma_3\sigma_1\sigma_2\sigma_1$\\
       $\sigma_3\sigma_2$\\
       $\sigma_1\sigma_2\sigma_3\sigma_2\sigma_1$
       \end{tabular} &

      \begin{tabular}{c}
       $\sigma_3\sigma_2\sigma_1$\\
       $\sigma_3\sigma_1\sigma_2$\\
       $\sigma_3\sigma_2\sigma_3\sigma_1$\\
       $\sigma_3\sigma_1\sigma_2\sigma_1$
\end{tabular}\\

\hline
\end{tabular}$$
\caption{\vspace{-4mm}}
\label{Tablefactors}
\end{table}

Table \ref{Tablefactors} gives all the possibilities of non-trivial non-$\Delta$ positive simple braids in~$B_4$ depending on the position of the outer strand (at the beginning of the braid) and on the value of the inner braid; for every possibility on the inner braid there are four possibilities for the corresponding braid on four strands. For the rest of this section we shall abbreviate ``nontrivial non-$\Delta$ simple 4-braids" by ``simple 4-braids" and they will be supposed positive unless otherwise stated.

We saw that curves for which the outer strand is the third or the fourth are images under the half-twist $\Delta$ of curves for which the outer strand is the second or the first, respectively. If the outer strand is the third (or the fourth) then the simple 4-braids whose interior component is $\sigma_1$, $\sigma_1\sigma_2$, $\sigma_2$, or $\sigma_2\sigma_1$, are images under the automorphism $\tau$ of simple 4-braids with outer strand in second position (or first position respectively) and whose inner component is $\sigma_2 $, $\sigma_2\sigma_1$, $\sigma_1$, or $\sigma_1\sigma_2$, respectively. This allows us to construct the rest of Table~\ref{Tablefactors}.

\subsection{Proof of Lemma \ref{technicalemma}}

We make a constant implicit use of Proposition \ref{gebhardt}, asserting that if $\Delta^px_1\cdots x_r$ is the left normal form of $x$ and if $x$ preserves one of the complexity 2 curves, then for $1\leqslant i\leqslant r$ the image of the considered curve under~${\Delta^px_1\cdots x_i}$ is again a complexity~2 (or lower than 2) curve.
By the following four lemmas we are going to eliminate all types of curves depicted in Figure \ref{curve3-2}.

\begin{lemma}
\label{t}
The curves of type $\mathcal T$ cannot be essential reduction curves of $x$.
\end{lemma}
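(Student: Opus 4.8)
The plan is to derive a contradiction from the assumption that one of the curves of type $\mathcal T$ (equivalently, after applying $\tau$, one of type $\mathcal S$, since $\mathcal T$ and $\mathcal S$ are related by the half-twist) is an essential reduction curve of the positive braid $x\in SSS(x)$. I would work with the explicit arc picture of Figure \ref{curve3-2} and the list of possible simple $4$-braid factors in Table \ref{Tablefactors}. The crucial ingredient is Proposition \ref{gebhardt}: each partial product $\Delta^p x_1\cdots x_i$ must send $\mathcal C$ to a curve of complexity at most $2$. So I would first compute, for each simple $4$-braid appearing in the relevant column of Table \ref{Tablefactors}, the image of each type-$\mathcal T$ curve, and record whether the complexity stays $\le 2$. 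This is a finite check: there are two curves of type $\mathcal T$ (outer strand in position $1$ or $2$; positions $3,4$ are handled by the $\tau$-symmetry of Remark \ref{remarquesurlescourbes}(2)), and for each the number of admissible first factors is small.

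The key structural fact I would exploit is that $\hat x$ is rigid (the corollary just before this subsection) and $\sup(\hat x)=\sup(x)$, $\inf(\hat x)=\inf(x)$ (Proposition \ref{infsup}), so by Proposition \ref{formenormale3} the inner components of the left normal form factors $x_1,\dots,x_r$ of $x$ are exactly the left normal form factors of $\hat x$, and the pair $x_r\,\tau^{-p}(x_1)$ (inner part) is left-weighted. This severely constrains which factor can follow which. Concretely, I would argue that in order to keep the complexity from exceeding $2$, the very first factor $x_1$ must be one of a short list of simple braids; then, tracking the image curve, the second factor $x_2$ is forced into another short list; and one checks that the constraint "the inner $3$-braid is in left normal form / rigid" is violated, or else the image curve acquires complexity $\ge 3$, or else one is forced into a cycle of factors whose product moves the outer strand in a way incompatible with $v_x<3\sup(x)$ (Lemma \ref{lemma1}) or with the outer strand being pure. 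In other words, I would chase the finite automaton whose states are "(image curve, position of outer strand)" and whose transitions are the admissible factors, and show it has no infinite path compatible with rigidity of $\hat x$.

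Writing it out, I would organize by the position of the outer strand and, within each case, by the value of the inner component of $x_1$, using Remark \ref{remarquesurlescourbes}(1) to halve the work via the horizontal-axis symmetry between curves of type $1$ and type $2$ (and the analogous symmetry $3\leftrightarrow4$), and Remark \ref{remarquesurlescourbes}(2) to reduce the outer-strand-in-position-$3$-or-$4$ cases to the position-$1$-or-$2$ cases already treated. For each surviving branch I would display the sequence of image arcs and point to the step where either the complexity jumps above $2$ or the left-weightedness of consecutive inner factors fails.

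The main obstacle I expect is purely bookkeeping: correctly computing the action of each simple $4$-braid on each arc in Figure \ref{curve3-2} and certifying the resulting complexity, since a single miscomputed image could collapse the whole case analysis. The conceptual content is light once Propositions \ref{gebhardt}, \ref{infsup}, \ref{formenormale3} and the rigidity of $\hat x$ are in hand; the difficulty is making the finite case-check exhaustive and transparent, which is presumably why the author isolates this as a separate lemma with its own proof rather than folding it into a single argument.
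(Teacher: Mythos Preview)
Your overall strategy --- finite automaton chase on (image curve, outer-strand position), constrained by Proposition~\ref{gebhardt} and rigidity of $\hat x$, with the symmetries of Remark~\ref{remarquesurlescourbes} to cut the work --- is sound, and is essentially what the paper does for the harder types $\mathcal U$, $\mathcal S$, $\mathcal R$ in the subsequent lemmas. But for type $\mathcal T$ you are missing the feature that makes this the \emph{easy} case: for $\mathcal T_1$ (resp.\ $\mathcal T_2$), \emph{every} simple $4$-braid whose inner component starts with $\sigma_2$ (resp.\ $\sigma_1$) already pushes the complexity above~$2$. There are no surviving branches to chase. So the first inner letter of $\Delta^{-\inf(x)}\hat x$ is forced (say $\sigma_1$ for $\mathcal T_1$, $\inf$ even); rigidity of $\hat x$ then forces the last inner letter to be $\sigma_1$ as well; and now the horizontal-axis symmetry $\mathcal T_1\leftrightarrow\mathcal T_2$ of Remark~\ref{remarquesurlescourbes}(1) says that applying the inverse of the last factor to $\mathcal T_1$ (equivalently, a positive factor with inner part starting in $\sigma_1$ to $\mathcal T_2$) increases complexity --- so the curve just before the last factor already has complexity $>2$, contradicting Proposition~\ref{gebhardt}. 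That is the whole proof: no automaton, no cycles, no appeal to Lemma~\ref{lemma1}.

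Two factual corrections you will need before any computation: both $\mathcal T_1$ and $\mathcal T_2$ have the outer strand in position~$1$ (not positions $1$ and $2$), so only the first column of Table~\ref{Tablefactors} is relevant; and types $\mathcal T$ and $\mathcal S$ are \emph{not} exchanged by the half-twist --- Remark~\ref{remarquesurlescourbes}(2) relates indices within a single type (e.g.\ $\mathcal T_2\leftrightarrow\mathcal T_3$), not different rows of Figure~\ref{curve3-2}. These slips would have you computing against the wrong list of factors.
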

\begin{proof}
First notice that it is sufficient to prove the claim for curves $\mathcal T_1$ and $\mathcal T_2$ (because of the symmetries between the curves).

For both curves $\mathcal T_1$ and $\mathcal T_2$ the outer strand is the first. The simple~4-braids whose outer strand is the first and whose inner component starts with the letter~$\sigma_2$ or~$\sigma_1$ send the curve $\mathcal T_1$ or $\mathcal T_2$, respectively, to strictly more complex curves. By the symmetries mentioned above, the simple 4-braids whose outer strand is the fourth and whose inner component starts with the letter~$\sigma_1$ or $\sigma_2$ send the curves~$\mathcal T_4$ or~$\mathcal T_3$, respectively, to strictly more complex curves. Similarly, by Remark \ref{remarquesurlescourbes}, the action of a negative simple 4-braid whose outer strand is the first and whose inner component starts with the letter $\sigma_1^{-1}$ or~$\sigma_2^{-1}$ on the curve~$\mathcal T_1$ or $\mathcal T_2$, respectively, yields a strictly more complex curve (as does the action of a negative simple 4-braid whose outer strand is the fourth and whose inner component starts with the 
 letter~$\sigma_2^{-1}$ or $\sigma_1^{-1}$ on the curve $\mathcal T_4$ or $\mathcal T_3$, respectively).

Now suppose that the curve $\mathcal T_1$ or $\mathcal T_2$ is an essential reduction curve for $x$.

If $\inf(x)$ is even then the first factor of $\Delta^{-\inf(x)}\hat x$ must start with the letter $\sigma_1$ (or~$\sigma_2$, respectively), by Proposition \ref{gebhardt} and the previous paragraph. By the rigidity of~$\hat x$ the last factor of $\hat x$ (which is the inner component of the last factor of $x$) must end with the letter $\sigma_1$ (or $\sigma_2$, respectively). Because of the symmetries between the curves $\mathcal T_1$ and $\mathcal T_2$ mentioned above, the image of the essential reduction curve at the beginning of the last factor of $x$ has to be of complexity greater than 2. This contradicts Proposition \ref{gebhardt}.

If $\inf(x)$ is odd, then the essential reduction curve at the beginning of the first non-$\Delta$ factor of $x$ is $\mathcal T_4$ or $\mathcal T_3$, respectively.  Thus the first letter of $\Delta_3^{-\inf(x)}\hat x$ must be $\sigma_2$, or $\sigma_1$, respectively. And by the rigidity of $\hat x$ the last letter of $\hat x$ must then be $\sigma_1$, or $\sigma_2$, respectively. As before we obtain a contradiction by considering the symmetries with respect to the horizontal axis.

These two cases and the remark at the beginning of the proof together achieve the proof of the lemma.
\end{proof}


\begin{lemma}
\label{u}
The curves of type $\mathcal U$ cannot be essential reduction curves for $x$.
\end{lemma}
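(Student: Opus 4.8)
The plan is to follow the pattern of the proof of Lemma~\ref{t}. First, by the symmetries recorded in Remark~\ref{remarquesurlescourbes} it suffices to rule out $\mathcal U_1$ and $\mathcal U_2$ as essential reduction curves for $x$: the curves $\mathcal U_3$ and $\mathcal U_4$ are the images of $\mathcal U_2$ and $\mathcal U_1$ under the half-twist $\Delta_4$, so if $\mathcal U_3$ or $\mathcal U_4$ were an essential reduction curve for $x$ then $\mathcal U_2$ or $\mathcal U_1$ would be one for $\tau(x)$, which again lies in $SSS(x)$; and the horizontal-axis symmetry exchanging $\mathcal U_1$ and $\mathcal U_2$ will let us deduce the statements about $\mathcal U_2$ from those about $\mathcal U_1$, as well as how the negative simple factors behave from how the positive ones do.

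Next I would extract the combinatorial input from Table~\ref{Tablefactors} together with the explicit pictures of the curves $\mathcal U_i$ in Figure~\ref{curve3-2}: for each choice of $\mathcal U_1$ or $\mathcal U_2$ as candidate reduction curve there should be a letter $\sigma_a$ (with $a\in\{1,2\}$, depending on the curve and on the position of its outer strand) such that any non-$\Delta$ simple $4$-braid whose inner component does \emph{not} begin with $\sigma_a$ sends that curve to one of complexity $\geqslant 3$; the analogous statements for negative simple factors, and for $\mathcal U_3$ and $\mathcal U_4$, then follow from Remark~\ref{remarquesurlescourbes}. Granting this, one argues as for type~$\mathcal T$, splitting on the parity of $\inf(x)$. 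If $\inf(x)$ is even, Proposition~\ref{gebhardt} forces the inner component of the first non-$\Delta$ factor of $x$ to begin with $\sigma_a$; since $\hat x$ is rigid, the inner component of the last factor of $x$ (which is the last factor of $\hat x$) must then \emph{end} with $\sigma_a$, whence by the horizontal-axis symmetry between $\mathcal U_1$ and $\mathcal U_2$ the image of the reduction curve at the start of that last factor has complexity $\geqslant 3$, which contradicts Proposition~\ref{gebhardt}. If $\inf(x)$ is odd, the reduction curve at the start of the first non-$\Delta$ factor of $x$ is $\mathcal U_4$ or $\mathcal U_3$, so the first letter of $\Delta^{-\inf(x)}\hat x$ is constrained to the mirrored value, and the same argument with the letters interchanged yields a contradiction. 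Together with the initial reduction this proves the lemma.

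The step I expect to be the main obstacle is the case-by-case geometric check behind that combinatorial claim: one must compute the image of each of $\mathcal U_1$ and $\mathcal U_2$ under every simple $4$-braid appearing in the relevant columns of Table~\ref{Tablefactors} and verify its complexity against the round and almost-round curves of Figure~\ref{rond} and the complete list of complexity-$2$ curves of Figure~\ref{curve3-2}. A point to watch is the position of the outer strand for curves of type~$\mathcal U$: if it is interior (the second or third puncture) there are more admissible simple factors than in the $\mathcal T$ case, and one must be sure the ``forbidden first letter'' dichotomy still holds uniformly; should it fail to be this clean, the fallback is to track the reduction curve through the successive Garside factors of $x$ one at a time (which Proposition~\ref{gebhardt} permits) and to read off the contradiction from the rigidity constraints relating the first and last factors of $\hat x$.
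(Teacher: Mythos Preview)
Your plan mirrors the proof of Lemma~\ref{t} too closely, and the crucial combinatorial hypothesis you rely on fails for the $\mathcal U$ curves. Concretely, for $\mathcal U_1$ (outer strand in second position) it is \emph{not} true that every simple $4$-braid whose inner component begins with the ``wrong'' letter $\sigma_1$ raises the complexity above~$2$: the simple $\sigma_1\sigma_2$ (inner component $\sigma_1$) sends $\mathcal U_1$ to $\mathcal S_1$, and $\sigma_1\sigma_2\sigma_3$ (inner component $\sigma_1\sigma_2$) sends it to $\mathcal T_1$, both of complexity~$2$. Likewise, for $\mathcal U_2$ there are simples with inner component starting with $\sigma_2$ (namely $\sigma_2\sigma_3\sigma_2$ and $\sigma_2\sigma_3\sigma_2\sigma_1$) that send $\mathcal U_2$ to $\mathcal T_3$ and $\mathcal S_3$. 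So Proposition~\ref{gebhardt} alone does not pin down the first letter of $\Delta^{-\inf(x)}\hat x$, and the one-step contradiction you aim for at the last factor (``complexity $\geqslant 3$ at the start of the last factor'') does not follow either.

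What is actually needed is precisely your ``fallback'', carried out in full: assuming the first inner letter is the wrong one, follow the curve through the successive Garside factors, using at each step both Proposition~\ref{gebhardt} and the left-weightedness of consecutive inner factors, and show that the trajectory can never return to a curve of type~$\mathcal U$ (it gets trapped among curves of types $\mathcal S$ and $\mathcal T$, with the $\mathcal T$ branches killed by Lemma~\ref{t}). This is how the paper establishes the constraint on the first inner letter. The endgame is also more delicate than in Lemma~\ref{t}: once rigidity fixes the last letter of $\hat x$, one does not get an immediate complexity jump; instead one passes to the reversed word via the horizontal symmetry $\mathcal U_1\leftrightarrow\mathcal U_2$ (using Proposition~\ref{formenormale3} to ensure the reversed inner word is still in left normal form) and invokes the trajectory analysis for the \emph{other} $\mathcal U$ curve to see that the reversed action cannot close up. Your outline does not anticipate either of these two steps.
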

\begin{proof}
As before we prove the statement only for curves $\mathcal U_1$ and $\mathcal U_2$.
We are going to assume, for a contradiction, that one of the curves $\mathcal U_1$ or $\mathcal U_2$ is an essential reduction curve for $x$. Hence Proposition \ref{gebhardt} forces the image of this curve under each Garside factor of $x$ to be of complexity at most 2.
\begin{claim}
Assume that the curve $\mathcal U_1$ is an essential reduction curve for $x$. Then the first letter of $\Delta_3^{-\inf(x)}\hat x$ is $$\begin{cases} \sigma_2 & \text{if \,\  $\inf(x)$ \,\ is even},  \\
\sigma_1 & \text{if \,\ $\inf(x)$ \,\ is odd.}
\end{cases}$$
\end{claim}
\noindent\textit{Proof.} Figure \ref{diagram}(a) is a guide for the proof of this claim. We are just going to use Proposition \ref{gebhardt} and the left-weightedness of pairs of consecutive factors of $\hat x$.

First suppose that $\inf(x)$ is even. In order to show that the first letter of $\Delta^{-\inf(x)}\hat x$ is $\sigma_2$, we are going to search which simple 4-braids could be the factors of the left normal form of $\Delta^{-\inf(x)}x$, provided the first letter of $\Delta^{-\inf(x)}\hat x$ is~$\sigma_1$.
Among all possible simple 4-braids whose outer strand is the second and whose inner component starts with the letter $\sigma_1$, only two do not increase the complexity of the curve~$\mathcal U_1$. These two are $\sigma_1\sigma_2$ and $\sigma_1\sigma_2\sigma_3$. We prove that neither of these two braids can be the first factor of $\Delta^{-\inf(x)}x$.

The first sends $\mathcal U_1$ to $\mathcal S_1$ and the second to $\mathcal T_1$; their inner components are $\sigma_1$ and~$\sigma_1\sigma_2$, respectively. Now $\mathcal T_1$ is sent to strictly more complex curves by simple four-braids whose inner component starts with the letter $\sigma_2$ (by Lemma \ref{t}), thus the only possibility left is the first.

Now the only simple 4-braids whose inner component starts with the letter $\sigma_1$, whose outer strand is the first (as in the curve $\mathcal S_1$) and which do not increase the complexity of the curve $\mathcal S_1$ are $\sigma_2$ and $\sigma_2\sigma_3$.
 The first fixes this curve and its inner braid is $\sigma_1$, the second sends $\mathcal S_1$ to $\mathcal T_1$ and its inner braid is $\sigma_1\sigma_2$; by the same argument as above this second case is impossible.
 
 Hence we have shown that if the first letter of $\Delta^{-\inf(x)}\hat x$ is $\sigma_1$ then the curve~$\mathcal U_1$ cannot be preserved by $x$. This shows the first part of the claim.
 
\indent Now suppose that $\inf(x)$ is odd.
The essential reduction curve $\mathcal U_1$ of $x$ is transformed (after the action of an odd number of factors $\Delta$) into the curve $\mathcal U_4$ at the beginning of the first factor of $\Delta^{-\inf(x)}x$. This situation is analogous to the situation we described in the proof of the first half of the claim, up to applying $\tau$. This proves the second part of Claim 1.

\begin{figure}[htb]
\centerline{\input{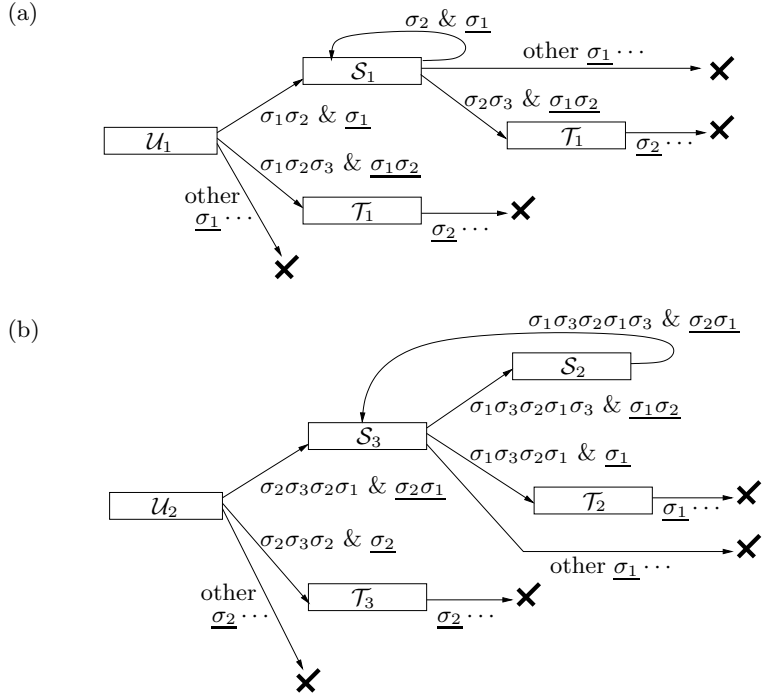}}
\caption{(a) The action on the curve $\mathcal U_1$ of simple 4-braids whose inner component starts with the letter $\sigma_1$.
(b) The action on the curve $\mathcal U_2$ of simple 4-braids whose inner component starts with the letter $\sigma_2$. Underlined letters $\sigma_i$ indicate inner braids. Bold crosses indicate curves of complexity greater than 2.
We can also see in (a) the action on the curve $\mathcal S_1$ of simple 4-braids whose inner component starts with the letter $\sigma_1$ and in (b) the action on the curve $\mathcal S_2$ of simple 4-braids whose inner component starts with the letter $\sigma_2$. This will be used in Lemma \ref{s}}
\label{diagram}\end{figure}

 \begin{claim}
Assume that the curve $\mathcal U_2$ is an essential reduction curve for $x$. Then the first letter of $\Delta_3^{-\inf(x)}\hat x$ is $$\begin{cases} 
\sigma_1 & \text{if \,\ $\inf(x)$ \,\ is even},  \\
\sigma_2 & \text{if \,\  $\inf(x)$ \,\  is odd.}
\end{cases}$$
\end{claim}

\noindent\textit{Proof.} This proof is illustrated in Figure \ref{diagram}(b). Again we look at which simple~${\text{4-braids}}$ can occur in the left normal form of $\Delta^{-\inf(x)}x$.

First suppose that $\inf(x)$ is even. Among all simple 4-braids whose interior braid starts with the letter $\sigma_2$ and whose outer strand is the second, only two do not increase the complexity of the curve $\mathcal U_2$, namely $\sigma_2\sigma_3\sigma_2$ and $\sigma_2\sigma_3\sigma_2\sigma_1$. The first one has as its inner braid $\sigma_2$ and it sends $\mathcal U_2$ to $\mathcal T_3$ whereas $\mathcal T_3$ is sent to strictly more complex curves by all simple 4-braids whose inner component starts with the letter~$\sigma_2$: this case cannot occur. The second one induces $\sigma_2\sigma_1$ on the inner strands and sends $\mathcal U_2$ to $\mathcal S_3$.

Among all suitable simple 4-braids the only ones which do not increase the complexity of the curve $\mathcal S_3$ are $\sigma_1\sigma_3\sigma_2\sigma_1$ and $\sigma_1\sigma_3\sigma_2\sigma_1\sigma_3$. Their inner components are~$\sigma_1$ and $\sigma_1\sigma_2$, and they send the curve $\mathcal S_3$ to $\mathcal T_2$ and $\mathcal S_2$, respectively. Because simple 4-braids whose inner component starts with the letter $\sigma_1$ always send $\mathcal T_2$ to strictly more complex curves, the only possibility left is the second. The following Garside factor of $x$ is then preceded by the curve $\mathcal S_2$ and its inner braid starts with the letter $\sigma_2$. This situation is the image under $\tau $ of the situation at the beginning of the preceding factor. Thus this third factor must be $\sigma_1\sigma_3\sigma_2\sigma_1\sigma_3$ and at its end the curve becomes~$\mathcal S_3$, a situation we already treated. Hence we see that if the inner braid $\Delta^{-\inf(x)}\hat x$ 
 starts with the letter $\sigma_2$, then we never retrieve the curve $\mathcal U_2$. This shows the first part of the claim.

\indent Now, suppose that $\inf(x)$ is odd.
In this situation, the odd number of factors equal to $\Delta$ transform $\mathcal U_2$ to $\mathcal U_3$. Up to applying $\tau$ we saw that no 3-braid starting with the letter $\sigma_1$ can be the inner braid of $\Delta^{-\inf(x)}x$, hence $\Delta^{-\inf(x)}x$ has to start with the letter $\sigma_2$.
This achieves the proof~of~Claim 2.

 Now, since $\hat x$ is rigid, if $\mathcal U_1$ is preserved by $x$ then the last letter of $\hat x$ must be~$\sigma_2$. However using the symmetry with respect to the horizontal axis, the action of negative simple 4-braids whose inner component starts with the letter $\sigma_2^{-1}$ on the curve~$\mathcal U_1$ can be seen from the action of positive simple 4-braids whose inner component starts with the letter $\sigma_2$ on the curve $\mathcal U_2$. So we look at the action of the reversed word rev($\Delta^{-\inf(x)}x$) on the curve $\mathcal U_2$. Though this word, with separations as in the left normal form of $x$, is not necessarily in left normal form, the word~rev($\Delta^{-\inf(x)}\hat x$) is, by Proposition \ref{formenormale3}. We saw in the proof of Claim 2 that this action cannot yield the curves $\mathcal U_1$ or $\mathcal U_2$. (Notice that in the proofs of the two claims we only used the left-weightedness of the pairs of consecutive factors in~$\hat x$ and the fact that the image of the essential reduction curve is of complexity bounded by~2 after each Garside factor of $x$.)
 Similarly if $\mathcal U_2$ is preserved by~$x$ then the last letter of $\hat x$ must be $\sigma_1$. An argument analogous to the previous one yields the desired contradiction.
\end{proof}

\begin{lemma}
\label{s}
The curves of type $\mathcal S$ cannot be essential reduction curves for $x$.
\end{lemma}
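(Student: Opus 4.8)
The plan is to mimic the structure of the proofs of Lemma \ref{t} and Lemma \ref{u}, exploiting the symmetries recorded in Remark \ref{remarquesurlescourbes}. As there, it suffices to treat the curves $\mathcal S_1$ and $\mathcal S_2$, since $\mathcal S_3$ and $\mathcal S_4$ are obtained from $\mathcal S_2$ and $\mathcal S_1$ by the half-twist $\Delta$, and the $\tau$-equivariance of Remark \ref{remarquesurlescourbes}(2) transports any contradiction for $\mathcal S_1,\mathcal S_2$ to one for $\mathcal S_3,\mathcal S_4$. So assume for contradiction that $\mathcal S_1$ (or $\mathcal S_2$) is an essential reduction curve of the positive braid $x\in SSS(x)$.

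The key observation is that the relevant computations have \emph{already been done}: Figure \ref{diagram}(a) displays the action on $\mathcal S_1$ of all simple 4-braids whose inner component starts with $\sigma_1$, and Figure \ref{diagram}(b) displays the action on $\mathcal S_2$ of all simple 4-braids whose inner component starts with $\sigma_2$ (this is exactly what the figure caption promises for use in Lemma \ref{s}). First I would read off from these diagrams a \emph{Claim} analogous to Claims 1 and 2 in Lemma \ref{u}: namely, that if $\mathcal S_1$ is an essential reduction curve of $x$, then by Proposition \ref{gebhardt} and the left-weightedness of consecutive factors of the rigid braid $\hat x$, the first letter of $\Delta_3^{-\inf(x)}\hat x$ cannot be $\sigma_1$ — following the arrows in Figure \ref{diagram}(a) one sees that starting with $\sigma_1$ forces the curve through $\mathcal S_1\to\mathcal T_1$ (and $\mathcal T_1$ is then pushed to complexity $>2$ by Lemma \ref{t}) or keeps cycling without ever producing a legitimate continuation; hence the first letter must be $\sigma_2$. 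Symmetrically, if $\mathcal S_2$ is preserved then the first letter of $\Delta_3^{-\inf(x)}\hat x$ must be $\sigma_1$ (by the $\tau$-image of the first analysis, using Figure \ref{diagram}(b)). One must also deal with the parity of $\inf(x)$ exactly as in Lemma \ref{u}: when $\inf(x)$ is odd the curve at the start of the first non-$\Delta$ factor is $\mathcal S_4$ (resp.\ $\mathcal S_3$), and the analysis is the $\tau$-image of the even case.

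Then I would close the argument by the same reversal trick used at the end of Lemma \ref{u}. Since $\hat x$ is rigid, if $\mathcal S_1$ is preserved by $x$ then the \emph{last} letter of $\hat x$ is $\sigma_1$; using the symmetry of $\mathcal S_1$ and $\mathcal S_2$ with respect to the horizontal axis (Remark \ref{remarquesurlescourbes}(1)), the action of negative simple 4-braids whose inner component starts with $\sigma_1^{-1}$ on $\mathcal S_1$ is read off from the action of positive simple 4-braids whose inner component starts with $\sigma_1$ on $\mathcal S_2$ — but Figure \ref{diagram}(b) together with the argument for $\mathcal S_2$ shows this action never recovers $\mathcal S_1$ or $\mathcal S_2$, so applying rev$(\Delta^{-\inf(x)}\hat x)$ (which is in left normal form by Proposition \ref{formenormale3}) to $\mathcal S_2$ and tracking complexity via Proposition \ref{gebhardt} gives a contradiction. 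The symmetric statement handles $\mathcal S_2$. Combined with the half-twist and $\tau$ symmetries this eliminates $\mathcal S_3,\mathcal S_4$ as well.

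The main obstacle I anticipate is purely bookkeeping: correctly enumerating, for each of $\mathcal S_1$ and $\mathcal S_2$, which of the (at most four) simple 4-braids with a given outer-strand position and a given starting letter of the inner component keep the complexity $\le 2$, and checking that every surviving branch in the resulting directed graph either lands on a $\mathcal T$-curve already excluded by Lemma \ref{t} or loops back into the $\mathcal S$-family without passing through $\mathcal U_1$. There is a real risk of a missed branch, so the careful step is to verify that the only $\le 2$-complexity images of $\mathcal S_1$ under admissible simple 4-braids are $\mathcal S_1$ itself (fixed), $\mathcal T_1$, and the two $\mathcal S$-curves appearing in the diagram — and symmetrically for $\mathcal S_2$ — exactly as encoded in Figure \ref{diagram}; everything downstream is then a mechanical repetition of the Lemma \ref{u} endgame.
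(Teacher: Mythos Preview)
Your overall plan---reproduce the two Claims of Lemma~\ref{u} for $\mathcal S_1$ and $\mathcal S_2$ using Figure~\ref{diagram}, then close with the rigidity-plus-reversal trick---is exactly the paper's approach. But you have missed the one genuinely new ingredient that distinguishes the $\mathcal S$-case from the $\mathcal U$-case, and the paper explicitly flags it as ``an additional argument''.

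The issue is your handling of the looping branches. In Figure~\ref{diagram}(a) the factor $\sigma_2$ (inner $\sigma_1$) \emph{fixes} $\mathcal S_1$, and the pair $\sigma_1\cdot\sigma_1$ is left-weighted in $B_3$, so nothing in Proposition~\ref{gebhardt} or in the left-weightedness of $\hat x$ prevents every factor of $\Delta^{-\inf(x)}x$ from being $\sigma_2$. Your phrase ``keeps cycling without ever producing a legitimate continuation'' is simply false here: the braid $x=\Delta^{2k}\sigma_2^{\,r}$ is rigid, lies in its own SSS, and preserves $\mathcal S_1$. The analogous phenomenon for $\mathcal S_2$ is the 2-cycle $\mathcal S_2\to\mathcal S_3\to\mathcal S_2$ via the factor $\sigma_1\sigma_3\sigma_2\sigma_1\sigma_3$; again this cycle can be repeated indefinitely and returns to the starting curve, unlike in the $\mathcal U_2$-analysis where the cycle stayed inside $\{\mathcal S_2,\mathcal S_3\}$ and never came back to~$\mathcal U_2$. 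So the Lemma~\ref{u} argument does \emph{not} go through mechanically.

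The missing step is to invoke \emph{essentiality}. In the first case $x=\Delta^{2k}\sigma_2^{\,r}$ also preserves the round circle around punctures $2,3,4$, which crosses $\mathcal S_1$; hence $\mathcal S_1$ cannot be an essential reduction curve. In the second case the repeated factor $\sigma_1\sigma_3\sigma_2\sigma_1\sigma_3$ forces $x$ to preserve the round circles around punctures $1,2,3$ and $2,3,4$, both of which cross $\mathcal S_2$. Only after disposing of these looping branches do you obtain the analogues of Claims~1 and~2, and then the rigidity endgame works. (Incidentally, your bookkeeping in that endgame is off: for $\mathcal S_1$ with even $\inf$ the first inner letter is $\sigma_2$, so rigidity forces the \emph{last} letter of $\hat x$ to be $\sigma_2$, not $\sigma_1$; the reversal then compares $\sigma_2^{-1}$ on $\mathcal S_1$ with $\sigma_2$ on $\mathcal S_2$.)
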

\begin{proof}
The proof is similar to the proof of the previous lemma and can be derived with the help of Figure \ref{diagram}.
We just have to use an additional argument since in fact there exist both: factors whose inner component starts with the letter~$\sigma_1$ and which preserve the complexity of the curve $\mathcal S_1$ on one hand, and factors whose inner component starts with the letter $\sigma_2$ and which preserve the complexity of the curve~$\mathcal S_2$ on the other hand. However, in both cases these factors are unique and $\Delta^{-\inf(x)}x$ only consists of repetitions of the following factors: $\sigma_2$ in the first case,~{yielding} a circle preserved by $x$ and which crosses the curves of type $\mathcal S$ (namely the circle surrounding punctures 2, 3 and 4); $\sigma_1\sigma_3\sigma_2\sigma_1\sigma_3$ in the second case, also yielding circles having non-empty intersection with the curves of type~$\mathcal S$ and being preserved by $x$ (the two circles surrounding punctures 1,2 and 3 and punctures~2,3 and~4).
At this point we have achieved the analogues of Claims 1 and 2 of the previous lemma; we conclude in the same way because of the rigidity of $\hat x$.
\end{proof}

\begin{lemma}
\label{r}
The curves of type $\mathcal R$ cannot be essential reduction curves for $x$.
\end{lemma}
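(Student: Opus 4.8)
The plan is to argue by contradiction along the lines of Lemmas \ref{t}, \ref{u} and \ref{s}. First I would assume that one of the curves of type $\mathcal R$ is an essential reduction curve for $x$, and then follow its images under the successive left-normal-form factors of $\Delta^{-\inf(x)}x$; by Proposition \ref{gebhardt} each such image has complexity at most $2$, and since $x$ is non-periodic the normal form must contain at least one non-$\Delta$ factor. By Remark \ref{remarquesurlescourbes} it is enough to treat two representatives, say $\mathcal R_1$ and $\mathcal R_2$: the curves $\mathcal R_3$ and $\mathcal R_4$ are obtained from them by $\tau$ and by the half-twist $\Delta_4$, the action of negative simple $4$-braids on $\mathcal R_1,\mathcal R_2$ is read from the action of positive ones on $\mathcal R_2,\mathcal R_1$ by reflection in the horizontal axis, and both parities of $\inf(x)$ are handled at once since an odd number of $\Delta$-factors interchanges $\mathcal R_1,\mathcal R_2$ with $\mathcal R_4,\mathcal R_3$.

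The first concrete step I would carry out is to extract from Table \ref{Tablefactors}, for each of $\mathcal R_1$ and $\mathcal R_2$, the list of simple $4$-braids whose inner component starts with the relevant letter ($\sigma_1$ for one curve, $\sigma_2$ for the other) and which do not increase the complexity of the curve; this builds the analogue, for $\mathcal R$, of the ``curve graph'' of Figure \ref{diagram}. Two outcomes are possible. If there is no admissible non-$\Delta$ factor leaving $\mathcal R_1$ or $\mathcal R_4$ (respectively $\mathcal R_2$ or $\mathcal R_3$), then the left normal form of $x$ cannot contain a non-$\Delta$ factor at all, contradicting non-periodicity, and the lemma follows at once. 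Otherwise, exactly as in Lemma \ref{s}, I expect the admissible factors to close up only into a short periodic pattern, forcing $\Delta^{-\inf(x)}x$ to be a repetition of a single admissible simple $4$-braid (or of a fixed alternating pair); such a repetition preserves one of the round circles surrounding $\{1,2,3\}$, $\{2,3,4\}$ or a pair of punctures, and each of these circles crosses every curve of type $\mathcal R$, which is incompatible with $\mathcal R$ being an essential (hence pairwise disjoint) reduction curve.

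To rule out an ``escape'' from this cyclic pattern near the two ends of the normal form I would use the rigidity of $\hat x$: it forces the last factor of $\hat x$ to end in a prescribed letter, and, reading the reversed word rev$(\Delta^{-\inf(x)}x)$ --- whose inner $3$-braid is still in left normal form by Proposition \ref{formenormale3} --- the reflection symmetry shows that the only factors compatible with both ends of the normal form keep the curve inside the cyclic pattern and never bring it back to type $\mathcal R$. Combining the two ends with the loop analysis then yields the contradiction.

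The main obstacle will be the purely combinatorial bookkeeping: one has to be sure that the enumeration of admissible simple $4$-braids acting on $\mathcal R_1$ and $\mathcal R_2$ is exhaustive, and that none of the loops in the resulting curve graph is compatible with $\Delta^{-\inf(x)}x$ lying in its super summit set while still allowing the image curve to return to type $\mathcal R$. As in the preceding three lemmas this is routine once the figure of admissible transitions is drawn, but it requires care.
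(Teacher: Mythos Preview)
Your plan is essentially the paper's own proof: reduce by symmetry to $\mathcal R_1$ and $\mathcal R_2$, list the admissible simple $4$-braids (those whose inner component starts with the ``forbidden'' letter and keep complexity~$\leqslant 2$), observe that the only surviving options either feed into curves of type~$\mathcal T$ or~$\mathcal U$ (eliminated by Lemmas~\ref{t} and~\ref{u}) or force $\Delta^{-\inf(x)}x$ to be $\sigma_3^p$ (for $\mathcal R_1$) or an alternation of $\sigma_2\sigma_3\sigma_1\sigma_2\sigma_3$ and $\sigma_2\sigma_1\sigma_3\sigma_2\sigma_1$ (for~$\mathcal R_2$), each of which preserves a round curve crossing~$\mathcal R$; then finish with rigidity of $\hat x$ and the horizontal-axis reflection exactly as before. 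The one point you leave implicit but which the paper makes explicit is that several admissible factors send $\mathcal R_i$ to a curve of type $\mathcal T$ or $\mathcal U$, and those branches are killed not by your ``periodic pattern'' argument but by invoking the already-proved Lemmas~\ref{t} and~\ref{u}; make sure your curve-graph enumeration records these exits and disposes of them this way.
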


\begin{proof}
The proof is modeled on that of Lemma \ref{u}. We prove the statement only for curves $\mathcal R_1$ and $\mathcal R_2$.
 \begin{claim}
Assume that the curve $\mathcal R_1$ is an essential reduction curve for $x$. Then the first letter of $\Delta_3^{-\inf(x)}\hat x$ is $$\begin{cases} 
\sigma_1 & \text{if \,\ $\inf(x)$ \,\ is even},  \\
\sigma_2 & \text{if \,\  $\inf(x)$ \,\  is odd.}
\end{cases}$$
\end{claim}
\noindent\textit{Proof.} See Figure \ref{diagram2}(a).

We first suppose that $\inf(x)$ is even. The only simple 4-braids whose outer strand is numbered 1, whose inner component starts with the letter $\sigma_2$, and which do not increase the complexity of the curve $\mathcal R_1$ when applied to it, are $\sigma_3\sigma_2\sigma_1, \sigma_3\sigma_2$ and~$\sigma_3$. The first two have $\sigma_2\sigma_1$ as their inner braid, the third has~$\sigma_2$. Since~${\mathcal R_1*\sigma_3\sigma_2\sigma_1=\mathcal U_1}$, the following Garside factor of $x$ (whose inner component has to start with the letter~$\sigma_1$) must be~$\sigma_1\sigma_2$ or~$\sigma_1\sigma_2\sigma_3$ as in the proof of Lemma~\ref{u}. This implies that in this case we can never retrieve the curve~$\mathcal R_1$, nor obtain the curve $\mathcal R_4$.

We also have $\mathcal R_1*\sigma_3\sigma_2=\mathcal T_2$. Because of Lemma \ref{t} we can eliminate this case. Finally~$\sigma_3$ (which has $\sigma_2$ 
as its inner braid) fixes the curve $\mathcal R_1$. Thus the only possibility left, provided the first letter of $\Delta^{-\inf(x)}\hat x$ is~$\sigma_2$, is that $x=\Delta^{\inf(x)}\sigma_3^p$ for some natural integer~$p$; in this case $x$ preserves a round curve (the circle surrounding punctures~${2, 3 \text{ and } 4}$) whose intersection with $\mathcal R_1$ is non-empty.

\begin{figure}[hbt]
\centerline{\input{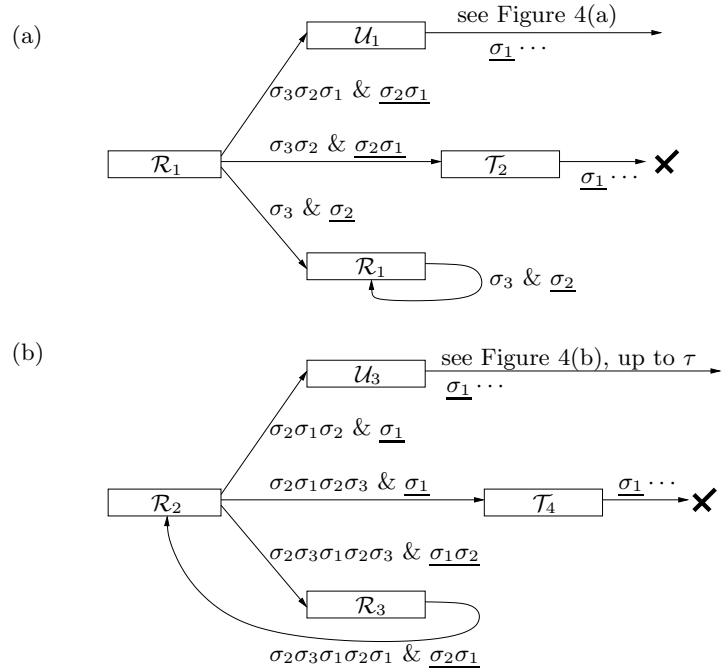}}
\caption{(a) The action on the curve $\mathcal R_1$ of simple 4-braids whose inner component starts with the letter $\sigma_2$.
(b) The action on the curve $\mathcal R_2$ of simple 4-braids whose inner component starts with the letter $\sigma_1$. Underlined letters $\sigma_i$ indicate inner factors. Bold crosses indicate curves of complexity greater than 2. Factors which immediately yield more complex curves are not represented here.}\label{diagram2}
\end{figure}

Up to applying $\tau$ (for the case when $\inf(x)$ is odd) this also shows the second statement of the claim.

\begin{claim}
Assume that the curve $\mathcal R_2$ is an essential reduction curve for $x$. Then the first letter of $\Delta_3^{-\inf(x)}\hat x$ is $$\begin{cases} 
\sigma_2 & \text{if \,\ $\inf(x)$ \,\ is even},  \\
\sigma_1 & \text{if \,\  $\inf(x)$ \,\  is odd.}
\end{cases}$$
\end{claim}

\noindent\textit{Proof.} See Figure \ref{diagram2}(b).
We prove the claim only for even $\inf$, the case of odd $\inf$ is symmetric.
The only simple 4-braids whose inner component starts with the letter~$\sigma_1$ and which do not increase the complexity of the curve $\mathcal R_2$ are ${\sigma_2\sigma_1\sigma_2,\,\sigma_2\sigma_1\sigma_2\sigma_3}$ and~$\sigma_2\sigma_3\sigma_1\sigma_2\sigma_3$. The first two induce $\sigma_1$ on the inner strands. They yield $\mathcal U_3$ and~$\mathcal T_4$, respectively. By Lemma \ref{t} we can exclude the latter. The first case is the image under $\Delta$ of the situation we described in Lemma \ref{u}. In this case we can never obtain $\mathcal R_2$ or $\mathcal R_3$.

Finally  the only possibility left is $\sigma_2\sigma_3\sigma_1\sigma_2\sigma_3$ (which induces $\sigma_1\sigma_2$ as its inner braid) and one has $\mathcal R_2*\sigma_2\sigma_3\sigma_1\sigma_2\sigma_3=\mathcal R_3$. According to the discussion just above, the following Garside factor of $x$ must be $\tau(\sigma_2\sigma_3\sigma_1\sigma_2\sigma_3)= \sigma_2\sigma_1\sigma_3\sigma_2\sigma_1$ (since~${\mathcal R_3=\mathcal R_2*\Delta}$). This factor has $\sigma_2\sigma_1$ as its inner component and yields the curve $\mathcal R_2$ when applied to the curve $\mathcal R_3$. This forces the braid $\Delta^{-\inf(x)}x$ to consist of a succession of the factors  $\sigma_2\sigma_3\sigma_1\sigma_2\sigma_3$ and  $\sigma_2\sigma_1\sigma_3\sigma_2\sigma_1$ alternately. In this case~$x$ preserves round curves (namely the two circles surrounding the punctures 1 and 2 and the punctures~3 and 4) which cross the curve $\mathcal R_2$.
This achieves the proof of the claim.

Claims 3 and 4 determine the last letter of $\hat x$ because of the rigidity of $\hat x$, and we conclude exactly as we did in the previous lemmas.
\end{proof}

These last four lemmas imply Lemma \ref{technicalemma}; since we had an exhaustive description of curves of complexity 2 in $D_4$ surrounding 3 punctures, Proposition \ref{3punctures} is shown.

\section{Examples and conjectures}

In this last section we shall see that our result is sharp in some sense. We give some examples which we obtained with the aid of \cite{braiding}.

The first example shows that we cannot remove the case of almost round curves in the statement of Proposition \ref{3punctures}.

\begin{example}\rm
Let us consider the braid $$x=\sigma_1\sigma_2\sigma_3\sigma_2.\sigma_2\sigma_1\sigma_3.\sigma_3\sigma_1.\sigma_3\sigma_2\sigma_1.\sigma_1\in B_4,$$ which is in left normal form as written.
Then $x\in SSS(x)$ and $x$ has an almost round essential reduction curve surrounding 3 punctures and no round reduction curve. See Figure \ref{example1}(a).
\end{example}

There are similar examples for curves surrounding two punctures:
\begin{example}\rm
Consider the 4-braid $$y=\sigma_2\sigma_3\sigma_1. \sigma_1\sigma_2\sigma_3\sigma_2. \sigma_2. \sigma_2\sigma_3.$$ Again $y$ lies in its own $SSS$, has no round reduction curve, and preserves an almost round curve surrounding 2 punctures.
See Figure \ref{example1}(b).
\end{example}

\begin{figure}[hbt]
\centerline{\input{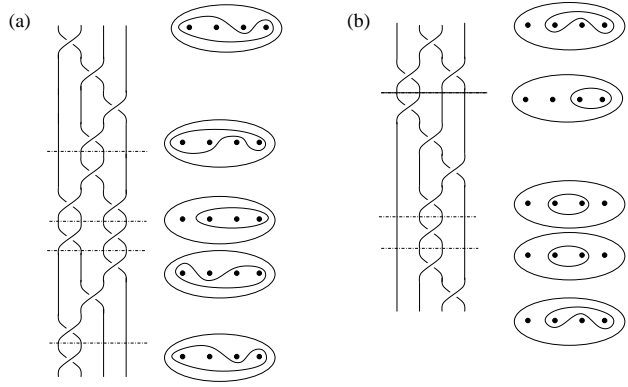}}
\caption{The braids $x$ and $y$ and their essential reduction curves. The dashed lines separate Garside factors.}\label{example1}
\end{figure}

Now let us show that our result cannot be extended to the $n\geqslant 5$ case. 
\begin{example}\rm\label{exempletrois}
Consider the braid $$z=\sigma_1\sigma_2\sigma_1\sigma_3\sigma_2\sigma_1.\sigma_1\sigma_3.\sigma_1\sigma_3\sigma_2\sigma_4.\sigma_2\sigma_1\sigma_4\sigma_3\sigma_2\sigma_1.\sigma_1\sigma_2\sigma_1.\sigma_1\sigma_2\sigma_1\sigma_3\sigma_2.\sigma_3\in B_5,$$ which is in left normal form as written. 
Then $z \in SSS(z)$ and $z$ has neither round reduction curves, nor almost round reduction curves. In fact $z$ preserves the complexity 2 curve shown in Figure \ref{examples}(a). Notice that the interior braid is pseudo-Anosov.
\end{example}

We conjecture that our results can be improved:
\begin{conjecture}
Proposition \ref{3punctures} can be generalized to every simple closed curve in~$D_4$ (even those surrounding 2 punctures) using the same kind of arguments.
\end{conjecture}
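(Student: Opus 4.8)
The plan is to follow the proof of Proposition~\ref{3punctures} almost verbatim, replacing ``curve surrounding 3 punctures'' by ``curve surrounding 2 punctures'' and the operation ``remove the outer strand'' by ``collapse the subdisk bounded by the reduction curve to a single fat strand''. Arguing by contradiction, one assumes there is a reducible 4-braid $x\in SSS(x)$ with an essential reduction curve of complexity $\geqslant 2$ surrounding 2 punctures; multiplying by a power of $\Delta^2$ we may take $x$ positive, and by the convexity of the $SSS$ (Corollary 4.2 of \cite{elrifaimorton}) together with Corollary~\ref{C:RoundInSomeSSS} we may assume the curve $\mathcal C$ has complexity exactly $2$. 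If $\mathcal C$ were only part of an orbit of essential curves surrounding two punctures (note that, unlike the 3-puncture case, a 4-braid can swap two disjoint such curves), we would first replace $x$ by $x^2$, which has the same canonical reduction system and fixes each curve in the orbit; so we may assume $\mathcal C*x=\mathcal C$.

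First I would classify, exactly as in Section~3.2 and Figure~\ref{curve3-2}, all simple closed curves of complexity $2$ surrounding $2$ punctures in $D_4$, via isotopy classes of arcs from a puncture to $\partial D_4$ with intersection number at most $1$ with every vertical line. This is a finite list, somewhat longer than $\mathcal R,\mathcal S,\mathcal T,\mathcal U$, but it again enjoys the symmetries (reflection in the horizontal axis, and the action of $\Delta_4$) used in Remark~\ref{remarquesurlescourbes}, which cut the number of cases to be treated by hand by roughly a factor of four. Next I would establish the analogue of the block ``$x$ and $\hat x$ have the same canonical length'': letting $\hat x\in B_3$ be the braid obtained by collapsing the two inner punctures of $\mathcal C$ to one fat strand, one uses the minimal standardizer $P$ of $\mathcal C$ and the $B_2$--$B_3$ decomposition $\langle x_0\rangle x_1$ of $P^{-1}xP$ from \cite{leelee} (now $x_0\in B_2$ records the internal winding of the two substrands of the fat strand and $x_1\in B_3$ is the quotient) to prove $\sup(\hat x)=\sup(x)$ and $\inf(\hat x)=\inf(x)$, whence $\hat x\in SSS(\hat x)$ and, by Proposition~\ref{rigid}, $\hat x$ is rigid (the case of small summit canonical length being handled directly). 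The role of Lemma~\ref{lemma1} is now played by a bound on the internal crossing number $k$ of the two strands inside the tube: since any two strands cross at most once in a positive permutation braid, each Garside factor of $x$ contributes at most $1$ to $k$, so $0\leqslant k\leqslant\sup(x)$, and one checks by hand that equality forces $x$ to preserve a round curve surrounding the current $\{1,2\}$ or $\{3,4\}$ (the two strands would then cross in every factor, which pins down their combined behaviour), contradicting that these round curves cross every curve in the new list. This gives $\sup(x_0)<\sup(X)=\sup(\hat x)$ and, dually with $\inf$ and the notion of split braid (Section~6 of \cite{leelee}), $\inf(x_0)>\inf(X)=\inf(\hat x)$, in the notation of Proposition~\ref{infsup}.

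Finally I would run the exhaustive case analysis of Section~3.5: build the analogue of Table~\ref{Tablefactors}, listing the simple $4$-braids according to which pair of (current) punctures forms the tube and which simple $3$-braid factor of $\hat x$ they project to; then, using Proposition~\ref{gebhardt} (complexity non-increasing along the left normal form of $x$) together with the left-weightedness of consecutive factors of the rigid braid $\hat x$, eliminate every curve in the list as a possible essential reduction curve, exactly as Lemmas~\ref{t}--\ref{r} do, with diagrams analogous to Figures~\ref{diagram}--\ref{diagram2}. I expect the main obstacle to be precisely this last, purely combinatorial, step: the list of complexity-$2$ curves surrounding two punctures is larger, there are more tube positions, and one must also track the internal winding $x_0$, which interacts with the reduction curve in a way that has no counterpart in the $3$-puncture case. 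Organizing this case analysis so that it is verifiably exhaustive — producing the analogues of Figures~\ref{diagram} and~\ref{diagram2} — is where the real work lies. Example~\ref{exempletrois} shows that any such argument must genuinely use features special to four strands, so no shortcut avoiding the case check is to be expected.
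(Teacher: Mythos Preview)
The statement you were asked to prove is a \emph{conjecture} in the paper, not a theorem: the authors do not supply a proof, only the remark that ``there are more curves of complexity~2 surrounding 2 punctures in $D_4$ than curves of complexity~2 surrounding three punctures in~$D_4$, thus the proof would be more difficult.'' Consequently there is no argument in the paper to compare your proposal against. Your sketch is exactly the strategy the authors have in mind when they write ``using the same kind of arguments'', and you correctly identify the final case analysis as the place where the real work lies.

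One point in your outline deserves care. When the essential curve~$\mathcal C$ surrounds two punctures, it may indeed be exchanged with a second disjoint curve by~$x$; you propose to pass to~$x^2$. But the hypothesis you need throughout (for Proposition~\ref{infsup}, its corollaries, and the use of Proposition~\ref{gebhardt}) is membership in the super summit set, and $x\in SSS(x)$ does \emph{not} in general imply $x^2\in SSS(x^2)$. You would either have to justify this implication in the present setting, or else work directly with the two-curve orbit: the minimal standardizer of the pair gives a conjugate of~$x$ of the form $\langle x_0\rangle x_1$ with $x_1\in B_2$ (the braid on the two tubes) and $x_0$ recording the pair of internal windings, and one can attempt the analogue of Proposition~\ref{infsup} in that decomposition without squaring. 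Either way this step is not automatic and should be singled out as a second obstacle alongside the enlarged case analysis.
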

However there are more curves of complexity 2 surrounding 2 punctures in $D_4$ than curves of complexity 2 surrounding three punctures in $D_4$, thus the proof would be more difficult.

We finish by briefly looking at two conceivable alternative approaches to the reducibility problem in $B_n$, for every $n\in~\mathbb{N}$. The first one concerns the cyclic sliding operation:
\begin{conjecture}\label{conjectureslide} There is a polynomial bound on the number of times one has to apply cyclic sliding in order to decrease the complexity of any essential reduction curve which is not round or almost round.
\end{conjecture}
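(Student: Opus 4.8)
The plan is to follow the same overall strategy used for $n=3$ and $n=4$ in this paper, but to replace the explicit lists of curves by an abstract descent argument that works uniformly. Throughout, let $x$ be reducible and let $\mathcal C$ be an essential reduction curve of $x$ of complexity $s\geqslant 2$.

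\emph{Step 1: reduce to a sliding circuit.} By the iterated form of Theorem~\ref{bkl}, after $O(\ell(x)^2 n^2)$ cyclic slidings one reaches an element of $SSS(x)$, and by the structure of the sliding operation from~\cite{gebhardtjuan} a further polynomial number of slidings reaches an element $y$ lying on a sliding circuit $y=\mathfrak s^0(y),\mathfrak s(y),\dots,\mathfrak s^N(y)=y$. Since by Proposition~\ref{gebhardt} the complexity of the image of $\mathcal C$ never \emph{increases} along a left normal form, and hence never increases under cyclic sliding, it suffices to bound the number of slidings needed to make it strictly \emph{decrease} once $x=y$ is on a circuit. Note also that $s\leqslant\ell(x)$: the minimal standardizer $P$ of $CRS(x)$ is a positive braid whose canonical length equals $s$, and after conjugating into a standardized position this is bounded by $\ell(x)$, exactly as in the proof of Proposition~\ref{gebhardt} (using \cite{leelee}). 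So it is enough to show that each unit decrease of complexity costs polynomially many slidings, and then to induct on $s$.

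\emph{Step 2: the descent lemma (the main obstacle).} Write $Q_k=\mathfrak p(y)\,\mathfrak p(\mathfrak s(y))\cdots\mathfrak p(\mathfrak s^{k-1}(y))$ for the conjugating element produced by the first $k$ slidings; one wants to show that $\mathcal C * Q_k$ has complexity $<s$ once $k$ exceeds a polynomial in $\ell(x)$ and $n$. For $s=0$ this is precisely Proposition~\ref{bernardete}: preferred prefixes preserve roundness. What must be added is that when $\mathcal C$ is \emph{not} round the preferred prefixes make genuine progress towards its minimal standardizer $P$, i.e.\ that the prefix gcd $Q_k\wedge P$ strictly grows at a polynomial rate. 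The natural mechanism is the transport of conjugating elements along a sliding circuit from~\cite{gebhardtjuan}: one transports $P$ around the circuit and tracks the canonical length of its successive transports, invoking Proposition~\ref{gebhardt} at each step to keep the complexity of $\mathcal C$ monotone. The point that has to be ruled out is that the complexity could stay equal to $s$ all the way around the circuit; this is exactly what the finite case analysis of Section~3 excludes for $n=4$, $s=2$, and a uniform replacement would be a statement of the form: \emph{a reduction curve of positive complexity which is invariant under every transport step along a sliding circuit must already be invariant under a sub-braid of canonical length $<s$ that sends it to a round curve}, contradicting the minimality of $s$.

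\emph{Step 3: conclusion.} Granting the descent lemma, the conjecture follows by induction on $s$: in $O(\mathrm{poly}(\ell(x),n))$ slidings the complexity drops to $s-1$, hence after $O(s\cdot\mathrm{poly}(\ell(x),n))=O(\mathrm{poly}(\ell(x),n))$ slidings it drops below $2$, that is, the curve becomes round or almost round. The genuinely hard point is the descent lemma of Step~2: the Garside-theoretic greedy normal form carries no \emph{a priori} geometric information about $\mathcal C$, so one must prove that it nonetheless drives $\mathcal C$ towards roundness, and at a polynomial rate. Example~\ref{exempletrois} shows this cannot happen in a single sliding once $n\geqslant 5$, so some quantitative bookkeeping of the transient and of the circuit length — rather than an "instantaneous" argument as for $n\leqslant 4$ — seems unavoidable; this is also precisely why a proof of this conjecture would resolve the complexity question underlying the algorithm of~\cite{bertjuan}.
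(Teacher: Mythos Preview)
This statement is a \emph{conjecture} in the paper, not a theorem; the paper offers no proof, only the remark that its truth would yield a polynomial reducibility algorithm for all~$B_n$, together with an example showing that the complexity need not drop after a single sliding. There is therefore nothing in the paper to compare your attempt against.

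Your proposal is not a proof either, and you essentially acknowledge this. The entire content of the conjecture is your ``descent lemma'' of Step~2, which you do not prove: you describe a plausible mechanism (transport of~$P$ around the circuit, growth of $Q_k\wedge P$), name what ``has to be ruled out'', and then open Step~3 with ``Granting the descent lemma''. But once one is on a sliding circuit and knows $s=O(\ell(x))$ --- both of which are standard --- the claim that the complexity drops by one within polynomially many slidings \emph{is} the conjecture; nothing in \cite{gebhardtjuan} or in the proof of Proposition~\ref{gebhardt} forces $Q_k\wedge P$ to grow, and the case analysis of Section~3 is precisely an ad~hoc substitute for such a descent argument in the special case $n=4$, with no uniform replacement known. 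A smaller but genuine gap already appears in Step~1: Proposition~\ref{gebhardt} bounds the complexity of $\mathcal C*\Delta^p x_1\cdots x_i$ only after each \emph{full} Garside factor, whereas $\mathfrak p(x)=\tau^{-p}(x_1)\wedge\partial(x_r)$ is in general a strict prefix of $\tau^{-p}(x_1)$; that $\mathcal C*\mathfrak p(x)$ still has complexity at most~$s$ does not follow from this, and even the round case $s=0$ (Proposition~\ref{bernardete}) requires its own separate argument.
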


\begin{figure}[hbt]
\centerline{\input{examples2345.pstex_t}}
\caption{}
\label{examples}
\end{figure}

Conjecture~\ref{conjectureslide} would imply a polynomial algorithm for solving the reducibility problem in all braid groups. This is because the results in~\cite{jingtao} imply that the complexity of any essential reduction curve of a reducible braid is linearly bounded by the length of this braid. The following example shows that the most optimistic version of Conjecture \ref{conjectureslide}, namely that the complexity of the reduction curve decreases in every single step, is wrong.
\begin{example}\rm
Consider the braid $z$ in Example \ref{exempletrois}. After one cyclic sliding one obtains from~$z$ the braid $$\mathfrak{s}(z)=\sigma_1\sigma_3.\sigma_1\sigma_3\sigma_2\sigma_1\sigma_4.\sigma_2\sigma_4\sigma_3\sigma_2\sigma_1.\sigma_1\sigma_2\sigma_3\sigma_2\sigma_1.\sigma_1\sigma_2\sigma_3\sigma_2.\sigma_2\sigma_1\sigma_3\sigma_2\sigma_1.\sigma_1$$ in left normal form, which still has a curve of complexity 2, shown in Figure \ref{examples}(b), as an essential reduction curve.
\end{example}

Another approach to the 
reducibility problem in $B_n$ comes from the following:
\begin{conjecture}
\label{conj2}
Let $x=\Delta^px_1\cdots x_r$ be a reducible braid in left normal form
such that $x\in SSS(x)$, and let~$\mathcal C$ be an essential reduction 
curve of~$x$. Then there is some~$i$ between~1 and~$r$ such that the 
curve~$\mathcal C$ is sent to a round or almost round curve by the braid 
$\Delta^px_1\cdots x_i$. 
\end{conjecture}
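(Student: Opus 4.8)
The plan is to generalize, essentially verbatim, the machinery of Section~3. First I would normalize: after replacing $x$ by $\Delta^{2k}x$ for $k\gg 0$ one may assume $x$ is a positive braid, since $\Delta^{2}$ is central and acts trivially on isotopy classes of simple closed curves, so that neither the reduction curves nor (up to the power of $\Delta$) the prefixes of the left normal form are affected. Write $s$ for the complexity of $\mathcal C$. If $s\leqslant 1$ there is nothing to prove: $x$ is not periodic, so $r\geqslant 1$, and by Proposition~\ref{gebhardt} the prefix $i=1$ already works. I would then argue by induction on $s$, the inductive step reducing to the following \emph{core claim}: if $x\in SSS(x)$ is reducible with an essential reduction curve of complexity $s\geqslant 2$, then \emph{some} prefix $\Delta^px_1\cdots x_j$ with $1\leqslant j\leqslant r$ sends that curve to a curve of complexity at most $s-1$. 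Granting the core claim, one passes to the rotated braid $x^{(j)}=(\Delta^px_1\cdots x_j)^{-1}x(\Delta^px_1\cdots x_j)$, which again lies in $SSS(x)$ and carries a reduction curve of complexity $\leqslant s-1$; the induction hypothesis supplies a prefix of $x^{(j)}$ witnessing roundness or almost-roundness, and --- using that $\Delta^{\pm 1}$ preserves round and almost-round curves, so the bookkeeping survives a rotation that wraps past $x_r$ --- this translates back into a prefix $\Delta^px_1\cdots x_i$ of $x$ itself with the required property.

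To establish the core claim I would run the template of Section~3. As in the proof of Proposition~\ref{3punctures}, the convexity of the super summit set (Corollary~4.2 of~\cite{elrifaimorton}) together with Corollary~\ref{C:RoundInSomeSSS} should let one reduce to the case of complexity exactly~$2$, where the core claim becomes the complexity-$2$ instance of the conjecture itself. One then classifies, via the arc description used in Section~3, the simple closed curves of complexity~$2$ in $D_n$; collapses each complementary subsurface cut out by the reduction system to produce lower-index ``inner'' and ``outer'' braids; proves, generalizing Proposition~\ref{infsup} and its corollaries, that collapsing preserves the canonical length and forces the inner braid to be rigid; and finally carries out a case-by-case analysis of the admissible simple factors, governed by Proposition~\ref{gebhardt} (complexity never increases along the left normal form) and by the left-weightedness of consecutive factors that rigidity imposes. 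In each branch one should land on the same dichotomy as in Lemmas~\ref{t}--\ref{r}: either a round or almost-round curve appears among the prefixes $\Delta^px_1\cdots x_i$, which is the conclusion; or the analysis forces $x$ to preserve an ambient round circle that crosses~$\mathcal C$, which is absurd since $\mathcal C$ is a reduction curve.

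The main obstacle is that all the finiteness that makes Section~3 work collapses for general $n$. The list of complexity-$2$ curves is no longer small, and --- more seriously --- a canonical reduction system for $x\in B_n$ is typically a nested family of several curves, so ``collapsing the complementary subsurfaces'' is not a single operation but must be organized layer by layer, with the interaction between the layers controlled. What one really needs is a structural lemma asserting that, seen from a single annular layer, every complexity-$2$ reduction curve looks like one of a bounded list of local models, so that the rigidity-plus-Proposition~\ref{gebhardt} argument can be localized; and then one must rule out that two or more layers conspire to keep the complexity pinned at~$2$ along the whole cyclic normal form. This last point is exactly the phenomenon realized for $n\geqslant 5$ by Example~\ref{exempletrois} (where the interior braid is even pseudo-Anosov): there one must verify by hand that, although the reduction curve is of complexity~$2$ and stays so under cyclic sliding, some prefix of a conjugate nonetheless drops its complexity. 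Finally, even the reduction to complexity~$2$ via convexity, immediate for the statement of Proposition~\ref{3punctures}, is more delicate here, because the hypothesis ``no prefix makes $\mathcal C$ round or almost-round'' is not manifestly inherited by the intermediate conjugates; making that step rigorous is itself part of the work.
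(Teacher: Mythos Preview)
The statement is Conjecture~\ref{conj2}, and the paper does \emph{not} prove it: it is explicitly presented as an open problem in the final section, with the remark that its truth, together with Theorem~\ref{bkl}, would yield a polynomial-time reducibility algorithm in all~$B_n$. There is therefore no proof in the paper to compare your proposal against.

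As for the proposal itself, you are candid that it is a programme rather than a proof, and you correctly identify several obstacles. Let me sharpen the central one. The decisive ingredient in Section~3 is not the finite list of complexity-$2$ curves but the \emph{rigidity} of the inner braid~$\hat x$ (the second corollary following Proposition~\ref{infsup}), which in turn rests on Proposition~\ref{rigid}: in~$B_3$, every super-summit element of canonical length~$>1$ is rigid. This is a three-strand accident and fails already in~$B_4$. Without rigidity, the mechanism of Lemmas~\ref{t}--\ref{r} --- using left-weightedness of consecutive inner factors to force the first and last letters of~$\hat x$ into contradiction --- has no traction. Your outline asserts that generalizing Proposition~\ref{infsup} ``forces the inner braid to be rigid'', but this is exactly the step for which no argument is available. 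Indeed, Example~\ref{exempletrois} exhibits a $5$-braid $z\in SSS(z)$ whose essential reduction curve has complexity~$2$ and whose interior braid is pseudo-Anosov; this shows that the direct analogue of Proposition~\ref{3punctures} is \emph{false} for $n\geqslant 5$, so the template of Section~3 cannot be run ``essentially verbatim''. The conjecture is genuinely weaker than that analogue --- it asks only that \emph{some} intermediate prefix drop the complexity to at most~$1$, not that the curve~$\mathcal C$ itself have complexity~$\leqslant 1$ --- but your proposal does not explain how to exploit this extra slack. Finally, as you yourself note, the convexity reduction produces a \emph{different} element of $SSS(x)$ with a complexity-$2$ curve, and there is no mechanism offered to transport information about its prefixes back to the prefixes of~$x$; until that is supplied, the inductive scheme remains a hope rather than an argument.
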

Note that Theorem \ref{bkl}, together with the truth of this conjecture,
would yield a polynomial time algorithm for solving the reducibility
problem in~$B_n$.

The hypothesis that $x\in SSS(x)$ in the statement of Conjecture~\ref{conj2} 
is necessary, as the following example shows:
\begin{example}\rm
Let $u=\sigma_1\sigma_2\sigma_1\sigma_3\sigma_2.\sigma_2\sigma_3\sigma_2\sigma_1.\sigma_1.\sigma_1\sigma_2\sigma_3.\sigma_3\sigma_2\in B_4$. The essential reduction curve of $u$ is of complexity 2 and it is sent to curves of complexity 2 after each Garside factor of $u$ (see Figure \ref{examples}(c)). Notice that the inner braid is also pseudo-Anosov.
\end{example}

{\bf Acknowledgements } The authors are grateful to Sergey Matveev for helpful comments on the paper, and in particular for bringing to  their attention Corollary~\ref{ito}. The first-named author's doctoral studies are supported by a grant ARED (R\'egion Bretagne). He was also supported by a grant for international mobility from the Universit\'e Europ\'eenne de Bretagne.




\begin{thebibliography}{1}
\bibitem{bernardete} D. Benardete, M. Gutierrez, Z. Nitecki, Braids and the Nielsen-Thurston classification, \textit{J. Knot Theory Ramif.} {\bf 4} (1995), 549--618.
\bibitem{bestvinahandel} M. Bestvina, M. Handel, Train-tracks for surface homeomorphisms, \textit{Topology} {\bf 34} (1995), 109--140.
\bibitem{bggm2} J. Birman, V. Gebhardt, J. Gonz\'{a}lez-Meneses,
Conjugacy in Garside groups II: Structure of the Ultra Summit Set, \textit{Groups, Geom. Dyn.} {\bf 2} (2008), 16--31.
\bibitem{juan2} J. Birman, V. Gebhardt, J. Gonz\'{a}lez-Meneses, Conjugacy in Garside groups III: Periodic braids, \textit{J. Algebra} {\bf 316} (2007), 746--776.
\bibitem{bkl} J. Birman, K. H. Ko, S. J. Lee, The infimum, supremum, and geodesic length of a braid conjugacy class, \textit{Adv. Math.} {\bf 164}
(2001), 41--56.
\bibitem{birman} J. Birman, A. Lubotzky, J. McCarthy, Abelian and solvable subgroups of the mapping class groups, \textit{Duke Math.} {\bf 50} (1983), 1107--1120.
\bibitem{elrifaimorton}
E. ElRifai, H. Morton, Algorithms for positive braids, 
\textit{Quart.J.Math.Oxford Ser.(2)} {\bf 45} (1994), 479--497.
\bibitem{farbmargprimer} B. Farb, D.Margalit, A primer on mapping class groups, to appear at \textit{Princeton University Press}
\bibitem{francomeneses} N. Franco, J. Gonz\'{a}lez-Meneses, Conjugacy problem for braid groups and Garside groups, \textit{J. Algebra} {\bf 266(1)} (2003), 112--132.
\bibitem{garside} F. Garside, The braid group and other groups, \textit{Quart.J.Math.Oxford Ser.(2)} {\bf 20} (1969), 235--254.
\bibitem{gebhardt}V. Gebhardt, A new approach to the conjugacy problem in Garside groups, \textit{J. Algebra} {\bf 292} (2005), 282--302.
\bibitem{gebhardtjuan} V. Gebhardt, J. Gonz\'{a}lez-Meneses, The cyclic sliding operation in Garside groups, \textit{Math. Zeitschrift} {\bf 265},
(2010), 85--114.
\bibitem{juancourse} J. Gonz\'{a}lez-Meneses, Basic results on braid groups.   \textit{Ann.\ Math.\ Blaise Pascal} {\bf 18(1)} (2011), 15--59.
\bibitem{juan2010} J. Gonz\'{a}lez-Meneses, On reduction curves and Garside properties of braids, \textit{Conbtemp. Math.} {\bf 538}, (2011), 227--244.
\bibitem{braiding} J. Gonz\'{a}lez-Meneses, Braiding software, available at http://personal.us.es/meneses
\bibitem{bertjuan} J. Gonz\'{a}lez-Meneses, B. Wiest, Reducible braids and Garside theory, \textit{Algebr. Geom. Top.} {\bf 11}, 2011, 2971--3010.

\bibitem{ito} T. Ito, Braid ordering and the geometry of closed braids, \textit{Geometry and Topology} {\bf 15} (2011), 473--498 
\bibitem{leelee} E. K. Lee, S. J. Lee, A Garside theoretic approach to the reducibility problem in braid groups, \textit{J. Algebra} {\bf 320(2)} (2008), 783--820.
\bibitem{los} J. Los, Pseudo-Anosov maps and invariant train tracks in the disc: a finite algorithm., \textit{Proc. London Math. Soc.} 
{\bf 66(2)} (1993), 400--430.
\bibitem{jingtao} J. Tao, Linearly bounded conjugator property for mapping class groups, arXiv:1106.2341.
\bibitem{bertcomplexity} B. Wiest, How to read the length of a braid from its curve diagram, \textit{Groups, Geom. Dynamics} {\bf 5} (2011), 673--681.


\end{thebibliography}
\end{document}